\documentclass[11pt,a4paper]{article}

\topmargin=-5mm \textheight 216mm  \textwidth=165mm
\oddsidemargin=-4mm \evensidemargin=-4mm
\usepackage{amsmath}
\usepackage{amsthm}
\usepackage{pb-diagram}
\usepackage{xypic}
\usepackage{amsfonts}
\usepackage{amsmath,arydshln,multirow}

\usepackage{pifont}
\usepackage{arydshln}
\usepackage{cases}
\usepackage{amsmath}
\usepackage{amsfonts}
\usepackage{bm}
\usepackage{arydshln}
\usepackage{amsfonts,amsmath,amssymb,amscd,bbm,amsthm,mathrsfs,dsfont}
\usepackage{mathrsfs}
\usepackage{pb-diagram}
\usepackage{amssymb}
\usepackage{xypic}
\usepackage[dvips]{graphicx}
\usepackage[all]{xy}
\usepackage{CJK}
\usepackage[dvipsnames]{xcolor}
\usepackage{mathtools}
\newtheorem{Theorem}{Theorem}[section]
\newtheorem{Lemma}[Theorem]{Lemma}

\newtheorem{Definition}[Theorem]{Definition}
\newtheorem{Corollary}[Theorem]{Corollary}
\newtheorem{Proposition}[Theorem]{Proposition}
\newtheorem{Example}[Theorem]{Example}
\newtheorem{Remark}[Theorem]{Remark}

\begin{document}
\title{ On topological representation theory from quivers\thanks{Project supported by the National Natural Science Foundation of China(No.12071422, No. 11971144), the Zhejiang Provincial Natural Science Foundation of China (No.LY19A010023)  and High-level Scientific Research Foundation of Hebei Province.}}
 \author{
 Fang Li\thanks{fangli@zju.edu.cn;\; Department of Mathematics, Zhejiang University,
 Hangzhou 310027, China },$\;\;\;\;$ Zhihao Wang\thanks{zhihaowang@zju.edu.cn;\; Department of Mathematics, Zhejiang University,
 Hangzhou 310027, China},$\;\;\;\;$ Jie Wu, \thanks{ wujie@hebtu.edu.cn;\; Center of Topology and Geometry Based Technology, Hebei Normal University, Shijiazhuang 050024, China; \; School of Mathematical Sciences, Hebei Normal University, Shijiazhuang 050024, China} $\;\;\;\;$ Bin Yu\thanks{binyu@cjlu.edu.cn;\; Department of Mathematics,
China Jiliang University, Hangzhou 310018, P.R.China}
}

\maketitle
\begin{abstract}
 In this work, we introduce {\em topological representations of a quiver} as a system consisting of topological spaces and its relationships determined by the quiver. Such a setting gives a natural connection between topological representations of a quiver and diagrams of topological spaces.

First,  we investigate the relation between the category of topological
representations and that of linear representations of a quiver via  $P(\Gamma)$-$\mathcal{TOP}^o$ and $k\Gamma$-Mod, concerning (positively) graded or vertex (positively) graded modules.
Second, we  discuss the homological theory of topological
representations of quivers via  $\Gamma$-limit $Lim^{\Gamma}$ and using it, define the homology groups of topological
representations of quivers via $H_n$. It is found that some properties
of a quiver can be read from homology groups.
Third,  we investigate  the homotopy theory of topological representations of quivers.
We define the homotopy equivalence between two morphisms in $\textbf{Top}\mathrm{-}\textbf{Rep}\Gamma$ and show that the parallel Homotopy Axiom also holds for top-representations based on the homotopy equivalence.
 Last, we mainly obtain the functor $At^{\Gamma}$ from $\textbf{Top}\mathrm{-}\textbf{Rep}\Gamma$ to
$\textbf{Top}$ and show that $At^{\Gamma}$ preserves homotopy equivalence between morphisms. The relationship is  established between the homotopy groups of a top-representation $(T,f)$ and the homotopy groups of $At^{\Gamma}(T,f)$.

\end{abstract}
\textbf{2010 Mathematics Subject
Classifications:}\;16G20, 16E30,  20M30; 46H15, 54H10, 55Q05

\tableofcontents

\section{Introduction and preliminaries}

Algebraic representation theory is a very important theory for us to understand and describe the structure of an algebra. It is also an important method to realize an abstract algebraic structure with concrete examples. The quiver theory is crucial to this theory.
\\
$\bullet$ If the field, as a one-dimensional linear spaces over itself, placed on the vertices of a quiver are all the same with the base field,  we then  obtain the path algebra of this quiver.
\\
$\bullet$ If one puts  non-trivial linear spaces on the vertices of a quiver, we then obtain a representation of this quiver, which is equivalent to the representation of the path algebra of the quiver.
\\
$\bullet$ If one places some algebraic bimodules on the vertices of a quiver, then we  obtain the representation of a  generalized path algebra (in fact, a tensor algebra), see \cite{LL}.

 Furthermore, one may ask what would happen if we put objects with more delicate structures on the vertices of a quiver.

In an early work \cite{LC}, we considered an extreme case: replacing linear spaces at vertices with sets and linear maps with general maps, then the resulting structure is called the \textbf{set-representation} of the quiver $\Gamma$. It was proved that the category of set-representations of $\Gamma$ is equivalent to the $S$-system category of the multiplicative semigroup $P(\Gamma)$ of $\Gamma$. Its  biggest difference is to not be longer an abelian category.

Why is the fact that representation categories are abelian important for us? The reason is that we consider it from the perspective of  quivers (possibly, with relations). We hope to achieve the characterization of quivers or its path algebras (with corresponding relations) through the homological properties of the categories.

If we look at objects on vertices,  and look at quivers only as the relationships among these objects on vertices, we do not need to pay too much attention to that whether the {\em categories} are abelian categories.

From this point of view, we can consider to put some objects on vertices that we want to care about, such as topological spaces. Therefore, we lead to the idea of {\em topological representation theory} of quivers.

We give a metaphor. A standard barbell consists of barbell bar and barbell pieces. When barbell pieces are very small, the focus of the barbell is the bar. When barbell pieces are replaced bigger and bigger, the focus of the barbell will become to the barbell pieces, no longer to the barbell bar.

The focus of the linear representations of a quiver is the quiver itself, while it is hoped that the quiver can be described by its representation. When we consider topological representations of a quiver on its vertices where topological spaces are put, the focus of topological representations of a quiver is no longer to depict the quiver, but to reflect the relationship among topological spaces placed on the vertices of the quiver. In this way, we can take topological representations of a quiver as a system consisting of topological spaces and its relationships determined by the quiver. Such a setting gives a natural connection between topological representations of a quiver and diagrams of topological spaces, although the emphasized views may be slightly different, where we concern more on the topological features of the quiver from the views of representation theory.

Now we are replacing the way of using representation category by topological space category to characterize a quiver. We call this phenomenon {\em topologization}, which is similar to the statement of geometrization. In this sense, this work is entitled as {\em topological representation theory}.
\vspace{4mm}

To begin, let us state some basic definitions and notations that will be used in this paper.

Let $X$ be a topological space, and let $G$ be a group. Suppose we have a
map $\pi: G\times X\rightarrow X$ defined by sending $(g, x)$ to $gx$ for
any $g\in G$ and $x\in X$. This leads to the following definition.

\begin{Definition}
Let $X$ be a topological space $G$ be a group. For any $g\in G$,
the left action ${L_{g}}$ on $X$ is a topological continuous map
$$L_{g}: X\rightarrow X$$
given by $x\mapsto gx$.
\end{Definition}

\begin{Definition}
Let $S$ be a semigroup and $M$ a non-empty topological space. If the map
$L_S: S\times M\longrightarrow M$ satisfies $L_{s_{2}s_{1}}(m)=L_{s_{2}}\big(L_{s_{2}}(m)\big)$
 $\forall
s_{1},s_{2}\in S$, $\forall m\in M$, then $(M, L_S)$ is called
a {\bf left $S$-topological system}, or says, $S$ {\bf acts on the left of} $M$.

For short, we denote $L_{s}(m)$ by $sm$, left $S$-topological system
$(M, L_S)$ just as $M$. Similarly, we can define {\bf right
$S$-topological systems}.
\end{Definition}

\begin{Definition}\label{maindef}
Let $M,N$ are two $S$-Systems of topological spaces, a continuous map $f:M\longrightarrow N$ is called an
{\bf $S$-morphism} from $M$ to $N$, if $f(sm)=sf(m)$, $\forall
s\in S$ and $\forall m\in M$. All left $S$-topological systems and all
$S$-morphisms between them constitute a category, denoted by
$S$-$\mathcal{TOP}$.
\end{Definition}
Clearly, if the semigroup $S$ contains zero element, then any
$S$-System $M$ must have an element $\theta$, such that
$s\theta=\theta$, $\forall s\in S$. If moreover, $M$ contains a
unique element $\theta_{M}$ satisfying $s\theta_{M}=\theta_{M}$,
$0m=\theta_{M}$, $\forall s\in S$ and $\forall m\in M$, we call
such $S$-System $M$ {\em central}. All central $S$-Systems and
$S$-morphisms between them also constitute a category. Clearly it
is a full sub-category of $S$-$\mathcal{TOP}$.

An important object in our study is quiver, whose definition is given as follows.

A {\em quiver} $\Gamma=(\Gamma_{0},\Gamma_{1})$ is an oriented
graph, where $\Gamma_{0}$ is the  set of the vertices and
$\Gamma_{1}$ is the set of arrows between vertices. A {\em
sub-quiver} of $\Gamma$ is just its oriented sub-graph.

We say a quiver $\Gamma$ is {\em a finite quiver} if $\Gamma_{0}$
and $\Gamma_{1}$ are both finite sets. We denote by $s:
\Gamma_{1}\rightarrow \Gamma_{0}$ and $t: \Gamma_{1}\rightarrow
\Gamma_{0}$ the maps, where $s(\alpha)=i$ and $t(\alpha)=j$ when
$\alpha:i\rightarrow j$ is an arrow from the vertex $i$ to the
vertex $j$.

A {\em path} $p$ in the quiver $\Gamma$ is either an ordered
sequence of arrows $p=\alpha_{n}\cdots\alpha_{2}\alpha_{1}$ with
$t(\alpha_{l})=s(\alpha_{l+1})$ for $1\leq l\leq n$, or the symbol
$e_{i}$ for $i\in \Gamma_{0}$. We call the path $e_{i}$ {\em
trivial path} and we define $s(e_{i})=t(e_{i})=i$. For a
non-trivial path $p=\alpha_{n}\cdots\alpha_{2}\alpha_{1}$, we
define $s(p)=s(\alpha_{1})$, and $t(p)=t(\alpha_{n})$.

A vertex $i$ in $\Gamma_{0}$ is called a {\em sink} if there is no
arrow $\alpha$ with $s(\alpha)=i$ and a {\em source} if there is
no arrow $\alpha$ with $t(\alpha)=i$.

Now we sketch a outline of this paper.

In Section 2, we introduce the
notion of topological representations of a quiver, as well as some
basic categorical notions such as exact sequences, injective (projective)
and indecomposable objects. As a result, we show the category of topological
representations of a quiver is equivalent to that defined in Definition
\ref{maindef} while $S$ being some path semigroup of the quiver (see Theorem \ref{The2.4}).

In Section 3, we investigate the relation between the category of topological
representations and that of linear representations of a quiver via  $P(\Gamma)$-$\mathcal{TOP}^o$ and $k\Gamma$-Mod. By using
the technics developed in \cite{L} concerning positively graded modules,
the results in Section 2 allow us to show that the property of the above two categories
are actually equivalent when the property is (positively) graded or vertex (positively) graded
(see Theorem \ref{topology}, Corollary \ref{Cor3.9} and \ref{Cor3.10}).

Section 4 is dedicated to discussing the homological theory of topological
representations of quivers. We introduce the concept of $\Gamma$-limit $Lim^{\Gamma}$ and use it to define the homology groups of topological
representations of quivers, and moreover we discuss the compatibility between $Lim^{\Gamma}$ and $H_{n}$ (cf. Theorem \ref{Cor4.17}).  We show that the homology groups of topological
representations can be well-defined by chain complexes (see Defintion \ref{Def4.13}). Moreover, some properties
of a quiver can be read from these homology groups (cf. Theorem \ref{Lem4.20}, Theorem \ref{The4.22} and \ref{The4.24}).

In Section 5, we discuss the homotopy theory of topological representations of quivers.
We define the homotopy equivalence between two morphisms in $\textbf{Top}\mathrm{-}\textbf{Rep}\Gamma$
(see Definition \ref{Def4.28}), and show that the parallel Homotopy Axiom also holds for top-representations based on the homotopy equivalence defined here (see Theorem \ref{The5.5}). Moreover the parallel Excision theorem holds for top-representations (see Theorem \ref{thm5.12}).

In Section 6, we mainly study the functor $At^{\Gamma}$ (see Definition \ref{Def6.2}) from the category $\textbf{Top}\mathrm{-}\textbf{Rep}\Gamma$ to the category
$\textbf{Top}$. We show that $At^{\Gamma}$ preserves homotopy equivalence between morphisms (see Theorem \ref{The6.4}). Moreover, we discuss some
properties of the functor $At^{\Gamma}$ (see Theorem \ref{The6.5}, \ref{The6.6} and \ref{The6.9}). Finally, we establish the relationship between the homotopy groups of a top-representation $(T,f)$ and the homotopy groups of $At^{\Gamma}(T,f)$ (see Corollary \ref{Cor6.11}).

\section{Top-representations of a quiver}

We start with the following definition.

\begin{Definition}\label{Def2.1}
Let $\Gamma=(\Gamma_{0},\Gamma_{1})$ be a quiver with $\Gamma_{0}$
the set of vertices and $\Gamma_{1}$ the set of arrows between
vertices. A {\em top-representation} $(T,f)$ of a quiver
$\Gamma=(\Gamma_{0},\Gamma_{1})$ is a family of pairs of topological space $\{T_{i}:
i\in\Gamma_{0}\}$ together with continuous map $f_{\alpha}$:
$T_{i}\rightarrow T_{j}$ for each arrow $\alpha$: $i\rightarrow j$.
\end{Definition}

We also call the top-representation of the quiver $\Gamma$ defined in Definition \ref{Def2.1} the $\Gamma$-representation via topological spaces.

Let $(T, f)$ and $(T', f')$ be two topological representations of $\Gamma$.
A {\em morphism}  $h$: $(T,f)\rightarrow(T',f')$  between two
top-representations of $\Gamma$ is a collection of continuous maps
$\{h_{i}:T_{i}\rightarrow T'_{i}\}_{i\in\Gamma_{0}}$ such
that for each arrow $\alpha$: $i\rightarrow j$ in $\Gamma_{1}$ the following
diagram:

$$\xymatrix{
&T_i\ar[r]^{h_i}\ar[d]_{f_{\alpha}}&T_i^{'}\ar[d]^{f_{\alpha}^{'}}\\
&T_j\ar[r]^{h_j}&T_j^{'}
}$$
\;\;\;\;\;\;\;\;\;\;\;\;\;\;\;\;\;\;\;\;\;\;\;\;\;\;\;\;\;\;\;\;\;\;\;\;\;\;\;\;\;\;\;\;\;\;\;\;\;\;\;\;\;\;\;\;\;\;\;\;\;\;\;\;\;\;\;\;\;\;\;\;
\;\;\;$Figure(I)$
\\
commutes. If $h$: $(T,f)\rightarrow(T',f')$ and $g$:
$(T',f')\rightarrow(T'',f'')$ are two morphisms between
top-representations, then the composition $gh$ is defined to be
the collection of maps $\{g_{i}h_{i}$: $T_{i}\rightarrow
T''_{i}\}_{i\in\Gamma_{0}}$. In this way, we get {\em the category
of topological representations} of $\Gamma$, which we denote by $\textbf{Top}\mathrm{-}\textbf{Rep}\Gamma$.

If we think from any set $X$, there is a unique map
$\bar{0}:X\rightarrow\emptyset$ and to any set $Y$, there is a
unique map $\underline{0}:\emptyset\rightarrow Y$. Then, we can allow $T(i)=\emptyset$ in some special cases. For $(T,f)\in$$\textbf{Top}\mathrm{-}\textbf{Rep}\Gamma$, if $T(j)=\emptyset$ and there exists an arrow
$\alpha:i\longrightarrow j$, then $T(i)$ must also be an emptyset.
Then we can
define the zero object in $\textbf{Top}\mathrm{-}\textbf{Rep}\Gamma$ as follows:
$(T,f)$ is called the {\em zero object}, which we denote by
$(\emptyset,0)$, if $T(i)=\emptyset$ for all $i\in \Gamma_{0}$ and
$f_{\alpha}=1_{\emptyset}$ for each arrow $\alpha$ in
$\Gamma_{1}$.
For any two elements $(T,f),(T^{'},f^{'})$ in $\textbf{Top}\mathrm{-}\textbf{Rep}\Gamma$, if
$T^{'}(i)=\emptyset$ implies $T(i)=\emptyset$(called condition $\gamma$),
then $Hom_{\textbf{Top}\mathrm{-}\textbf{Rep}\Gamma}((T,f),(T^{'},f^{'}))=\{(h_i)_{i\in\Gamma}|f^{'}_{ij}h_i=h_jf_{ij},\forall\alpha:i\longrightarrow j\}$.
If the condition $\gamma$ doesn't hold, then we set $Hom_{\textbf{Top}\mathrm{-}\textbf{Rep}\Gamma}((T,f),(T^{'},f^{'}))=\{0\}$,
where $0$ is the zero morphism from $(T,f)$ to $(T^{'},f^{'})$.
For the object $(T,f)\in$$\textbf{Top}\mathrm{-}\textbf{Rep}\Gamma$,
$(T,f)$ is called a null object if there exists an $i_0\in\Gamma_0$
such that $T(i_0)=\emptyset$, otherwise, it is called a non-null object.


If we think from any set $X$, there is a unique map
$\bar{0}:X\rightarrow\emptyset$ and to any set $Y$, there is a
unique map $\underline{0}:\emptyset\rightarrow Y$, then we can
define the zero object in {$\textbf{Top}\mathrm{-}\textbf{Rep}\Gamma$ as follows:
$(T,f)$ is called the {\em zero object}, which we denote by
$(\emptyset,0)$, if $T(i)=\emptyset$ for all $i\in \Gamma_{0}$ and
$f_{\alpha}=1_{\emptyset}$ for each arrow $\alpha$ in
$\Gamma_{1}$.

 An object $(S,f)$ is called a {\em sub-object} of an object $(T', f')$
in $\textbf{Top}\mathrm{-}\textbf{Rep}\Gamma$, if $T(i)\subseteq T'(i)$ for all
$i\in\Gamma_{0}$ and $f_{\alpha}=f'_{\alpha}|_{T(i)}$ for each
arrow $\alpha$ starting from $i$.

A {\em sum}, or coproduct, of two objects $(T,f)$ and $(T',f')$ in $\textbf{Top}\mathrm{-}\textbf{Rep}\Gamma$ is the object $(W,g)$, where $W(i)=T(i)\dot{\cup}
T'(i)$ for each $i\in\Gamma_{0}$ and $g_{\alpha}=f_{\alpha}\dot{\cup}
f'_{\alpha}$ for all $\alpha\in\Gamma_{1}$.
And, we denote the the sum of $(T,f)$ and $(T',f')$
as $(T,f)\coprod (T^{'},f^{'})$.
 An object $(T,f)$ is
said to be {\em indecomposable} if it can not be written as the
sum of any two nonzero top-representations. An object $(T,f)$ is
{\em simple} if it has no proper nonzero sub-objects. Clearly, a
simple object is indecomposable.

A product of two objects $(T,f)$ and $(T',f')$ in $\textbf{Top}\mathrm{-}\textbf{Rep}\Gamma$ is the object $(H,h)$,
where $H(i)=T(i)\times T^{'}(i)$ for each $i\in\Gamma_0$
and $h_{\alpha}=f_{\alpha}\times
f'_{\alpha}$ for all $\alpha\in\Gamma_{1}$.
And, we denote the the product of $(T,f)$ and $(T',f')$
as $(T,f)\times (T^{'},f^{'})$.


Next, we illustrate with some examples.

\begin{Example}
  Let $(T, f)$ be  an object  in $\textbf{Top}\mathrm{-}\textbf{Rep}\Gamma$ and $V(i)=\{(a,a)\mid a \in
  T(i)\}$, $g_{\alpha}=(f_{\alpha}\times f_{\alpha})\mid
  _{V(i)}$ for each $i\in \Gamma_{0}$ and  an arrow $\alpha$ starting from $i$, then $(V,g)$ is a
  sub-object of $(T, f)\times (T, f)$, which is denoted as $1_{(T, f)}$.
  \end{Example}

\begin{Example}
Let $\Gamma$ be the quiver $1\cdot \rightarrow \cdot 2$, $(T, f)$
and $(T^{'}, f^{'})$ be two top-representations, where
$T(1)=\{x_{1},y_{1}\}$, $T(2)=\{x_{2},y_{2}\}$,
 $T^{'}(1)=\{x^{'}_{1},y^{'}_{1}\}$ and $T^{'}(2)=\{x^{'}_{2},y^{'}_{2}\}$
 are all spaces with discrete topology. Let $f_{\alpha}(x_{1})=x_{2}$,
 $f_{\alpha}(y_{1})=y_{2}$,
$f^{'}_{\alpha}(x^{'}_{1})=f^{'}_{\alpha}(y^{'}_{1})=x^{'}_{2}$, and let $h_{1}: T(1)\rightarrow T^{'}(1)$ with $h_{1}(x_{1})=y^{'}_{1}$
and $h_{1}(y_{1})=x^{'}_{1}$, $h_{2}: T(2)\rightarrow T^{'}(2)$
with $h_{2}(x_{2})=h_{2}(y_{2})=x^{'}_{2}$, then
$h=\{h_{1},h_{2}\}$ is a morphism from $(T, f)$ to $(T^{'}, f^{'})$.

\end{Example}

Let $P(\Gamma)$ be the set consisting of $0$ and all  paths in the
quiver $\Gamma$ . Define a multiplication $\cdot$ on $P(\Gamma)$
as follows: $0\cdot\rho=\rho\cdot 0=0$ for all $\rho\in
P(\Gamma)$, for any two paths $\rho_{ji}$ from $i$ to $j$ and
$\rho_{tk}$ from $k$ to $t$, $\rho_{ji}\cdot\rho_{tk}
= \left \{ \begin{array}{ll} 0, & \textmd{if}\;\; i\not=t\\
\rho_{ji}\rho_{tk}, & \textmd{if}\;\; i=t
\end{array}\right.$
where $\rho_{ji}\rho_{tk}$ means the connection of $\rho_{ji}$ and $\rho_{tk}$ for $i=t$.

Then $P(\Gamma)$ becomes a semigroup with zero $0$ under the
multiplication $\cdot$. Omitting $\cdot$, we usually write
 $\rho_{1}\rho_{2}$ instead of $\rho_{1}\cdot\rho_{2}$.

Now, by Definition \ref{maindef} we have a category $P(\Gamma)$-$\mathcal{TOP}$, further we can define a subcategory $P(\Gamma)$-$\mathcal{TOP}^{o}$ of it like this: the objects $M$ are
 $P(\Gamma)$-Systems satisfying (i) $P(\Gamma)M=M$; (ii) there is a unique element $\theta_{M}\in
 M$ such that $\{\theta_M$\} is a component of $M$ as a isolated point and \ $0m=\theta_{M}$, for all $m\in M$ (Here $\theta_{M}$
   acts as the ``{\em zero element}" of $M$); (iii) if $e_{i}m\neq
  \theta_{M}$, then $\alpha m\neq \theta_{M}$, for all arrows
  $\alpha$ starting from $i$.

Let $M\in $$P(\Gamma)$-$\mathcal{TOP}^{o}$, use $M_i$ denote the subspace of $M$ $e_iM\setminus\{\theta_M\}$
for all $i\in\Gamma_0$. Then, it is easy to show that $M=\dot{\cup}_{i\in\Gamma_0}M_i\dot{\cup}\{\theta_M\}$.
Since $e_i:M\rightarrow M,x\mapsto e_ix$ is continuous and $\{\theta_M\}$ is a close subset of
M, then $e_i^{-1}(\{\theta_M\})=M\setminus M_i$ is a close subset of $M$. Thus, we have $M_i$ is an open
subset of $M$ for each $i\in\Gamma_0$.

Note that for any $\rho\in P(\Gamma)$, we always have
 $\rho\theta_{M}=\rho(0m)=(\rho\cdot0)m=0m=\theta_{M}$. And
 clearly $P(\Gamma)$ is an object of
 $P(\Gamma)$-$\mathcal{TOP}^{o}$, called the {\em regular object}, and
 $\{\theta\}$ is the {\em zero object} of
 $P(\Gamma)$-$\mathcal{TOP}^{o}$, if we define the action as $\rho
 \theta=\theta$ for all $\rho \in P(\Gamma)$.

 For two objects $M$ and $N$ in $P(\Gamma)$-$\mathcal{TOP}^{o}$, a
{\em morphism} $\varphi$: $M\rightarrow N$ is defined as a continuous map
satisfying (i) $\varphi(\rho m)=\rho\varphi(m)$ for any $m\in M$
and $\rho\in P(\Gamma)$; (ii) $\varphi (m)\neq \theta_{N}$, if
$m\neq\theta_{M}$.

Note that (ii) is equivalent to saying $\varphi\big(M\setminus
\theta_{M}\big)\subseteq N\setminus \{\theta_{N} \}$, and when
$\rho=0$, from (i), it must hold that $\varphi(\theta_{M})=
\theta_{N}$.

And the object $M=\{\theta_M\}\in$$P(\Gamma)$-$\mathcal{TOP}^{o}$
acts as a zero object, we denote it as $\{0\}$.

For two objects $M$ and $N$ and a morphism
$\varphi$: $M\rightarrow N$ in $P(\Gamma)$-$\mathcal{TOP}^{o}$,
it is easy to show that $\varphi(e_iM\setminus\{\theta_M\})\subset e_iN\setminus\{\theta_N\}$, for all $i\in\Gamma_0$.
Thus, if $e_{i_0}N\setminus\{\theta_N\}=\emptyset$ but $e_{i_0}M\setminus\{\theta_M\}$ is nonempty for some $i_0\in\Gamma_0$,
which will lead to contradiction. In this case, we set
$Hom_{P(\Gamma)\mathrm{-}\mathcal{TOP}^{o}}(M,N)=\{0\}$, where the $0$ is the zero morphism from
$M$ to $N$. For any object $M\in$$P(\Gamma)$-$\mathcal{TOP}^{o}$,
if $e_{i_0}M\setminus\{\theta_M\}=\emptyset$ for some $i_0\in\Gamma_0$, it is called the null element, otherwise, it is called the non-null element.

Then, $P(\Gamma)$-$\mathcal{TOP}^{o}$ is exactly a subcategory of
the category $P(\Gamma)$-$\mathcal{TOP}$.

We have known from \cite{A}\cite{B} that, for a field $k$ and a
finite quiver $\Gamma$, there exists an equivalence between the
two categories {\bf Lin-Rep}$\Gamma$ and $k\Gamma$-Mod, where {\bf
Lin-Rep}$\Gamma$ is the category of $k$-linear representations of
$\Gamma$ and $k\Gamma$-Mod the $k\Gamma$-module category. It is
interesting for us to find that the similar result also holds
between the two weaker categories $\textbf{Top}\mathrm{-}\textbf{Rep}\Gamma$ and
$P(\Gamma)$-$\mathcal{TOP}^{o}$, that is, we have :

\begin{Theorem}\label{The2.4}
The two categories $\textbf{Top}\mathrm{-}\textbf{Rep}\Gamma$ and
 $P(\Gamma)$-$\mathcal{TOP}^{o}$ are equivalent.
\end{Theorem}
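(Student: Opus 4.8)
<br>

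The plan is to construct an explicit pair of functors between $\textbf{Top}\mathrm{-}\textbf{Rep}\Gamma$ and $P(\Gamma)$-$\mathcal{TOP}^{o}$ and show that their compositions are naturally isomorphic to the respective identity functors. This mirrors the classical equivalence between $\textbf{Lin-Rep}\Gamma$ and $k\Gamma$-Mod, but with disjoint unions of spaces (plus a basepoint) playing the role of direct sums and with continuity replacing linearity.

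First I would define a functor $F\colon \textbf{Top}\mathrm{-}\textbf{Rep}\Gamma \to P(\Gamma)\text{-}\mathcal{TOP}^{o}$. Given $(T,f)$, set $F(T,f) = \bigl(\dot{\bigcup}_{i\in\Gamma_0} T_i\bigr)\,\dot{\cup}\,\{\theta\}$ as a topological space (disjoint union topology, $\{\theta\}$ an isolated point), and define the $P(\Gamma)$-action by letting a path $\rho = \alpha_n\cdots\alpha_1$ from $i$ to $j$ act on $x\in T_k$ by $\rho x = f_{\alpha_n}\circ\cdots\circ f_{\alpha_1}(x)$ if $k=i$ and $\rho x = \theta$ otherwise; the trivial path $e_i$ acts as the identity on $T_i$ and sends everything else to $\theta$; and $0x = \theta$ for all $x$. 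One checks this is a $P(\Gamma)$-system, that it satisfies conditions (i)–(iii) defining $\mathcal{TOP}^{o}$ (note (iii) is automatic because each $f_\alpha$ is everywhere defined on $T_i$), and that a morphism $h = \{h_i\}$ of top-representations induces the continuous $P(\Gamma)$-morphism $F(h)$ acting as $h_i$ on each $T_i$ and fixing $\theta$; functoriality is then routine. Note that by the remark in the excerpt, $M_i = e_iM\setminus\{\theta_M\}$ is open in $M$, so the spaces $T_i$ are recovered as open subspaces, and the disjoint-union topology is the correct one.

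Conversely I would define $G\colon P(\Gamma)\text{-}\mathcal{TOP}^{o} \to \textbf{Top}\mathrm{-}\textbf{Rep}\Gamma$ by $G(M) = (T,f)$ where $T_i := M_i = e_iM\setminus\{\theta_M\}$ with the subspace topology (open in $M$ by the cited remark), and for an arrow $\alpha\colon i\to j$ define $f_\alpha\colon T_i\to T_j$ by restricting the action map $x\mapsto \alpha x$; one must verify $f_\alpha(T_i)\subseteq T_j$, which follows from $\alpha e_i = \alpha$, $e_j\alpha = \alpha$ in $P(\Gamma)$ together with condition (iii) guaranteeing $\alpha x\neq \theta_M$ when $x\in T_i$, and continuity of $f_\alpha$ follows from continuity of the action and of the inclusion/corestriction of open sets. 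A $P(\Gamma)$-morphism $\varphi\colon M\to N$ restricts to continuous maps $\varphi_i\colon M_i\to N_i$ (using that $\varphi$ maps $e_iM\setminus\{\theta_M\}$ into $e_iN\setminus\{\theta_N\}$, already noted in the excerpt) commuting with the $f_\alpha$, giving $G(\varphi)$.

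Finally I would exhibit natural isomorphisms $GF\cong \mathrm{id}$ and $FG\cong \mathrm{id}$. For $GF$: given $(T,f)$, the space $(GF(T,f))_i$ is by construction exactly $T_i$ (the component of $F(T,f)$ over $i$), and the structure maps agree on the nose since $f_\alpha$ acts by the single arrow $\alpha$; so $GF$ is literally the identity, or at worst the identity up to the canonical homeomorphism identifying a coproduct component with its summand. For $FG$: given $M$, one has the decomposition $M = \dot{\bigcup}_i M_i \,\dot{\cup}\,\{\theta_M\}$ from the excerpt, and the obvious map $FG(M)\to M$ that is the inclusion on each $M_i$ and sends $\theta$ to $\theta_M$ is a bijective $P(\Gamma)$-morphism; it is a homeomorphism because the topology on $M$ is the disjoint-union topology of the open pieces $M_i$ together with the isolated point $\theta_M$ — this is the one genuinely topological point and the main thing to nail down carefully, though it is immediate from $M_i$ being open and $\{\theta_M\}$ being an open-and-closed component. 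Naturality in both cases is a direct diagram chase. The main obstacle, such as it is, is purely bookkeeping: checking that the $\mathcal{TOP}^{o}$ axioms (i)–(iii) are exactly what is needed for $F$ to land in $\mathcal{TOP}^{o}$ and for $G$ to produce genuine continuous maps $f_\alpha$ with the right codomains, i.e. that conditions (ii) and (iii) precisely encode "the basepoint is an isolated component" and "arrows don't collapse non-basepoint vertices to the basepoint." No serious difficulty is anticipated beyond this verification.
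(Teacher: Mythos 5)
Your proposal is correct and follows essentially the same route as the paper: the same functor $F$ built from the pointed disjoint union $\dot{\cup}_{i\in\Gamma_0}T(i)\,\dot{\cup}\,\{\theta_M\}$ with the path action by composites of the $f_\alpha$, the same inverse functor via $T(i)=e_iM\setminus\{\theta_M\}$, and the same verification that both composites are the identity. Your extra care about the topology (that each $M_i$ is open and $\{\theta_M\}$ isolated, so the disjoint-union topology is recovered) is exactly the point the paper establishes in its remark preceding the theorem, so nothing is missing.
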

\begin{proof}: We start by defining two functors $F$: $\textbf{Top}\mathrm{-}\textbf{Rep}\Gamma$ $\rightarrow$ $P(\Gamma)$-$\mathcal{TOP}^{o}$ and
$H$: $P(\Gamma)$-$\mathcal{TOP}^{o}$ $\rightarrow$ $\textbf{Top}\mathrm{-}\textbf{Rep}\Gamma$.

 For an object $(T, f)$  in $\textbf{Top}\mathrm{-}\textbf{Rep}\Gamma$,
  set $M=\dot{\cup}_{i\in \Gamma_{0}} T(i)\cup\theta_{M}$,
  where $\theta_{M}$ is an element
 which is not in $T(i)$ for all $i\in \Gamma_{0}$.
  Define the action of $P(\Gamma)$ on the set $M$ as follows: for any $m \in M$,
 $\rho \in  P(\Gamma)$,

  (i) $\rho m =\theta_{M}$, if $\rho=0$;

  (ii) $\rho m=m$, if $m\in T(i)$ and $\rho=e_{i}$;

  (iii) $\rho m=f_{\alpha}(m)$, if $m\in T(i)$ and $\rho$ is an arrow $\alpha: i\rightarrow j$;

  (iv) $\rho m=f_{\alpha_{s}}\cdots f_{\alpha_{1}}(m)$, if $m \in T(i)$,
  $\rho=\alpha_{s}\cdots\alpha_{1}$ where $\alpha_{s},\cdots,\alpha_{1}$
 are arrows and $\alpha_{1}$ starts from $i$.

From  this definition, it is easy to see that
$P(\Gamma)\theta_{M}=\theta_{M}$, and that if $m\in T(i)$
but $\rho$ does not start from $i$, then $\rho
m=\rho(e_{i}m)=(\rho\cdot e_{i})m=0m=\theta_{M}$.

Clearly, $M$ is a $P(\Gamma)$-System under the action defined
above. Moreover, we can say $M$ is an object of
$P(\Gamma)$-$\mathcal{TOP}^{o}$.
  Firstly, the element $\theta_{M}$ satisfies $0m=\theta_{M}$ for all $m\in M$. And, obviously, $P(\Gamma)M\subseteq M$.
Conversely, for all $m\in M$, when $m\not=\theta_{M}$, suppose
$m\in T(i)$ for some $i$, then $m=e_{i}m$; when $m=\theta_{M}$, we
have
 $P(\Gamma) \theta_{M} = \theta_{M} $. Hence $M\subseteq P(\Gamma)M$.
  It follows that  $P(\Gamma)M=M$.
 If $e_{i}m\neq \theta_{M}$, which implies $m\in T(i)$, then for
 all arrows as $\alpha:i\rightarrow j$, $\alpha m=f_{\alpha}(m)\in
 T(j)$, so $\alpha m\neq \theta_{M}$.
 Then $M$ is
an object of $P(\Gamma)$-$\mathcal{TOP}^{o}$.

 Now, we can start to define the functor $F$: $\textbf{Top}\mathrm{-}\textbf{Rep}\Gamma$$\rightarrow P(\Gamma)$-$\mathcal{TOP}^{o}$ by $F(T, f)=M$.

Let $h$ be a morphism from $(T, f)$ to $(T', f')$ in the category
$\textbf{Top}\mathrm{-}\textbf{Rep}\Gamma$.  Then,  for each $i\in \Gamma_{0}$,  we
have a map  $h_{i} :$ $T(i)\rightarrow T'(i)$ satisfying the
Figure (I), i.e. $h_{j}f_{a} = f'_{a}h_{i}$ for each arrow
$\alpha$ from $i$ to $j$. It has been known that
$M=F(T, f)=\dot{\cup}_{i\in \Gamma_{0}}T(i)\cup\theta_{M}$ and
$M'=F(T', f')=\dot{\cup}_{i\in
\Gamma_{0}}T'(i)\cup\{\theta_{M'}\}$,
 Introducing a map
${\tilde h}: M\rightarrow M'$ satisfying that ${\tilde
h}|_{T(i)}=h_{i}$ for all $i$ and ${\tilde
h}(\theta_{M})=\theta_{M'}$. Thus, we can get $\tilde{h}(\alpha
m)=\alpha\tilde{h}(m)$ for each $m\in M$. Moreover, for each $m\in M$, then $\forall\rho=\alpha_s\cdots\alpha_2\alpha_1\in P(\Gamma), m\in M$, we have
\begin{align*}
\tilde{h}(\rho m)&=\tilde{h}\big(f_{\alpha_s}\cdots f_{\alpha_2}f_{\alpha_1}(m)\big)\\
&=h_k\big(f_{\alpha_s}\cdots f_{\alpha_2}f_{\alpha_1}(m)\big)\\
&=f'_{\alpha_s}h_{k-1}\big(f_{\alpha_{s-1}}\cdots f_{\alpha_2}f_{\alpha_1}(m)\big)\\
&=\cdots\\
&=f'_{\alpha_s}\cdots f'_{\alpha_2}f'_{\alpha_1}\big(\tilde{h}(m)\big)\\
&=\rho\big(\tilde{h}(m)\big)
\end{align*}
This shows that $F(h)=\tilde{h}$, and so $\tilde{h}$ is a morphism in $P(\Gamma)\textbf{-TOP}$. When $m\neq\theta_{M}$,
$\tilde{h}(m)\neq\theta_{M'}$ since
$\tilde{h}(T(i))=h_{i}(T(i))\subseteq T'(i)$.
 Therefore $\tilde{h}$ is a morphism from $M$ to $M'$. This means one can set $F(h)=\tilde{h}$.

We next want to define a functor $H$:
$P(\Gamma)$-$\mathcal{TOP}^{o}$ $\rightarrow$ $\textbf{Top}\mathrm{-}\textbf{Rep}\Gamma$. For an object $M$ in category of
$P(\Gamma)$-$\mathcal{TOP}^{o}$, let $T(i)=e_{i}M \setminus
\theta_{M}$. For all arrows $\alpha : i\rightarrow j$,  define
$f_{\alpha}:T(i)\rightarrow T(j)$ as follows: for all $m \in
T(i)$, suppose $m=e_{i}m^{'}$, let $f_{\alpha}(m)=\alpha m$, it is
well-defined since $\alpha m=\alpha (e_{i}m^{'})=\alpha m^{'}\neq
\theta_{M}$ and $\alpha m= (e_{j}\alpha)m=e_{j}(\alpha m)\in
T(j)$. Therefore let $H(M)=(T, f)$, where $S=\{T(i):i\in
\Gamma_{0}\}$, and $f=\{f_{\alpha}:$ there is an arrow $\alpha$
from $i$ to $j\}$. Then $H(M)$ is an object of category $\textbf{Top}\mathrm{-}\textbf{Rep}\Gamma$.

If $\varphi :M\rightarrow M'$ is a morphism  in
$P(\Gamma)$-$\mathcal{TOP}^{o}$, we have  $H(M)=(T, f)$,
$H(M')=(T', f')$, where $T(i)=e_{i}M \setminus \theta_{M}$ and
$T'(i)=e_{i}M' \setminus \{\theta_{M'}\}$. Since $\varphi
(e_{i}M)=e_{i}\varphi (M)\subseteq e_{i}M'$ and $\varphi(m)\neq
\theta_{M'}$ for all $m\in M$ and $m\neq \theta_{M}$, then we get
$\varphi_{i}:e_{i}M \setminus \theta_{M} \rightarrow e_{i}M'
\setminus \{\theta_{M'}\}$ by restriction, i.e.
$\varphi_{i}=\varphi|_{T(i)}:T(i)\rightarrow T'(i)$. For each
arrow $\alpha:i\rightarrow j$, we have $\alpha
\varphi(m)=\varphi(\alpha m)$, for all $m\in M$. So $\alpha
\varphi_{i}(m)=\varphi_{j}(\alpha m)$, for all $m\in T(i)$. Hence
$f'_{\alpha}\varphi_{i}(m)=\varphi_{j}f_{\alpha} (m)$, for all
$m\in T(i)$. Then, $f'_{\alpha}\varphi_{i}=\varphi_{j}f_{\alpha}$
for any arrow $\alpha:i\rightarrow j$.  Therefore we can set
$H(\varphi)=\{\varphi_{i}\}_{i\in \Gamma_{0}}$, which is a
morphism in $\textbf{Top}\mathrm{-}\textbf{Rep}\Gamma$.

Next, we will prove $F$ and $H$ are mutual-inverse equivalent
functors. Let $(T, f)$ be an object in $\textbf{Top}\mathrm{-}\textbf{Rep}\Gamma$, then
$M=F(T, f)=\dot{\cup}_{j\in \Gamma_{0}}T(j)\cup \{ \theta_{M}\}$
and $e_{i}M\setminus
\theta_{M}=e_{i}(\dot{\cup}_{j\in\Gamma_{0}}T(j))\setminus \{
\theta_{M}\}=e_{i}T(i)\setminus \theta_{M}=T(i)$. For an arrow
$\alpha :i\rightarrow j$ in $\Gamma_{1}$, the map
$f_{\alpha}:T(i)\rightarrow T(j)$ induces the map $\tilde
f_{\alpha}:F(T, f)\rightarrow F(T, f)$ satisfying $\tilde f_{\alpha}
(m)=\alpha m$  for all $m\in F(T, f)$. The restriction of $\tilde
f_{\alpha}$ on $e_{i}F(T, f)\setminus \theta_{M}=T(i)$ is just
$f_{\alpha}$. So $HF(T, f)=(T, f)$.

For a morphism $h=\{h_{i}\}_{i\in \Gamma_{o}}$: $(T, f)\rightarrow
(T', f')$, we have $F(h)=\tilde h$ where $\tilde
h\mid_{T(i)}=h_{i}$, $\tilde h(\theta_{M})=\theta_{M'}$. Due to
the definition of $H$, it follows  $HF(h)=\{h_{i}\}_{i\in
\Gamma_{0}}$. Thus, $HF={\bf id}$ the identity functor in $\textbf{Top}\mathrm{-}\textbf{Rep}\Gamma$.

Let $M$ be an object in $P(\Gamma)$-$\mathcal{TOP}^{o}$, then
$H(M)=(T, f)$, where $T(i)=e_{i}M\setminus \theta_{M}$ and
\[
f=\{f_{\alpha}:T(i)\rightarrow T(j)\mid\;\;
f_{\alpha}(m_{i})=\alpha m_{i}\; {\rm for\; an\; arrow}\;  \alpha:
i\rightarrow j \; {\rm and}\;m_{i}\in T(i)  \}.
\]
  When $i\neq j$, if there exists two elements $m, m^{'}\in M$, such that $e_{i}m=e_{j}m^{'}\neq
  \theta_{M}$, then for an arrow $\alpha: i\rightarrow k$, $\alpha(e_{i}m)=\alpha m\neq
  \theta_{M}$, but $\alpha(e_{j}m^{'})=(\alpha
  e_{j})m^{'}=0m^{'}=\theta_{M}$, this is a contradiction. Hence
  $T(i)\cap T(j)=\emptyset$ when $i\neq j$.
  So if we can
prove $M=\cup_{i\in \Gamma_{o}}T(i)\cup \theta_{M}$, then
$FH(M)=M$. In fact, $\cup_{i\in \Gamma_{o}}T(i)\cup
\theta_{M}\subseteq P(\Gamma)M=M$. Conversely, for all $m\in
M$, if $m=\theta_{M}$, it is clearly that $m\in \cup_{i\in
\Gamma_{o}}T(i)\cup \theta_{M}$, when $m\neq \theta_{M}$,
since $m\in M=P(\Gamma)M$, there is $\rho_{ji}\in P(\Gamma),m'\in
M$, such that $m=\rho_{ji}m'$. Clearly $m'\neq\theta_{M}$, so
$m=\rho_{ji}m'=e_{j}(\rho_{ji}m')\in e_{j}M\setminus
\theta_{M}=T(j)$. Therefore, $M\subseteq \cup_{i\in
\Gamma_{o}}T(i)\cup \theta_{M}$.

 For a morphism
$\varphi:M\rightarrow M'$, we have
$H(\varphi)=\{\varphi_{i}:T(i)\rightarrow T'(i)\mid\;\;
\varphi_{i}=\varphi\mid_{T(i)}\}_{i\in \Gamma_{0}}$. Moreover, due
to the definition of $F$, it follows $FH(\varphi)=\varphi$.
Therefore, $FH={\bf id}$ the identity functor in
$P(\Gamma)$-$\mathcal{TOP}^{o}$.

\end{proof}

\begin{Remark}

According to Theorem \ref{The2.4} and some known results in the theory of S-Systems of semigroups, it is easy for us to obtain the following conclusions:

(1)$P(\Gamma)$ is the coproduct of $P(\Gamma)e_i,i\in\Gamma_0$ in the category $P(\Gamma)$-$\mathcal{TOP}^{o}$.

(2)Let $M\in$$P(\Gamma)$-$\mathcal{TOP}^{o}$, and $M_{i_0}=e_{i_0}M\setminus\{\theta_M\}\neq\emptyset$ for some $i_0\in\Gamma_0$.
Then, for any $m_{i_0}\in M_{i_0}$, there exists a unique morphism $\varphi_{i_0}$ from $P(\Gamma)e_{i_0}$ to
$M$ such that $\varphi_{i_0}(e_{i_0})=m_{i_0}$.

(3)Let $M\in$$P(\Gamma)$-$\mathcal{TOP}^{o}$, and $M_{i}=e_iM\setminus\{\theta_M\}\neq\emptyset$ for all $i\in\Gamma_0$.
Then, for any $\{m_i|m_i\in M_i,i\in\Gamma_0\}$, there exists a unique morphism $\varphi$ from $P(\Gamma)$ to
$M$ such that $\varphi(e_{i})=m_{i}$ for all $i\in\Gamma$.

(4)For each $i\in\Gamma_0$, $P(\Gamma)e_i$ is a projective object in category $P(\Gamma)$-$\mathcal{TOP}^{o}$.

(5)The regular object $P(\Gamma)$ is a projective object in category $P(\Gamma)$-$\mathcal{TOP}^{o}$.
\end{Remark}

Due to Theorem \ref{The2.4} and the discussions of the product
of two objects $(T,f)$ and $(T^{'},f^{'})$ in category $\textbf{Top}\mathrm{-}\textbf{Rep}\Gamma$, for any two objects $M,N$ in category $P(\Gamma)$-$\mathcal{TOP}^{o}$,
the product of $M$ and $N$, denoted as $M\times N$, is $Q$,
where $Q_i=M_i\times N_i$
and $\rho(x,y)=(\rho x,\rho y)$ for all $i\in\Gamma_0,(x,y)\in M_i\times N_i,\rho\in P(\Gamma)$
($M_i=e_iM\setminus\{\theta_M\},N_i=e_iN\setminus\{\theta_N\},Q_i=e_iQ\setminus\{\theta_Q\}$).

In the category $\textbf{Top}\mathrm{-}\textbf{Rep}\Gamma$, for a morphism $h=\{h_{i}\}_{i \in  \Gamma_{0}}
  :(T, f)\rightarrow (T^{'}, f^{'})$, we define the image $Imh$ to be
  the suboject $(U,g)$ of $(T^{'}, f^{'})$, where $U(i)=Imh_{i}$ and
  $g_{\alpha}=f^{'}_{\alpha}|_{Imh_{i}}$ for each arrow $\alpha:i\rightarrow
  j$. We define the kernel $Kerh=(V,f^{''})$ as the fibre product of $(T, f)$ and itself under the morphism $h$, i.e., a sub-object  of
  $(T, f)\times (T, f)$ with $V(i)=\{(a,b)|a,b \in T(i)\, {\rm with }\, h_{i}(a)=
  h_{i}(b)\}$ and $f^{''}_{\alpha}=(f_{\alpha}\times
  f_{\alpha})|_{V(i)}$ for each arrow $\alpha:i\rightarrow j$.

If each $h_{i}$ is injective (respectively surjective), we call
$h$ a {\em monomorphism} (respectively an {\em epimorphism}), and
$h$ is an {\em isomorphism} if and only if $h$ is both monomorphic
and epimorphic. The morphism $h$ given in Example 2.2 is neither
monomorphic nor epimorphic. We call the sequence $(T, f)
\stackrel{h}{\rightarrow} (T^{'}, f^{'}) \stackrel
{h^{'}}{\rightarrow}(T^{''}, f^{''})$ a {\em related exact
sequence} if
$(Imh\times Imh)\bigcup 1_{(T^{'}, f^{'})}=Ker
h^{'}$.

Similarly, for a morphism $h:M\rightarrow N$ in category $P(\Gamma)$-$\mathcal{TOP}^{o}$
Kerh is a sub-object of $M\times M$ with $Kerh=\{(x_1,x_2|h(x_1)=h(x_2)\}$.
We call the sequence in category $P(\Gamma)$-$\mathcal{TOP}^{o}$ $Q
\stackrel{f}{\rightarrow} M \stackrel
{g}{\rightarrow}N$ a {\em related exact
sequence} if
$(Imf\times Imf)\bigcup 1_M=Kerg$.

 Then, we have

\begin{Proposition}
(i) The sequence $(\emptyset,0)\rightarrow (T, f)\stackrel
{h}{\rightarrow }(T^{'}, f^{'})$ is related exact if and only if
$h$ is a monomorphism.

(ii) Suppose $\mid T^{'}(i)\mid \geq 2$ for all $i\in \Gamma_{0}$,
then  the sequence $(T, f)\stackrel{h}{\rightarrow
}(T^{'}, f^{'})\rightarrow (\emptyset,0)$ is related exact if and
only if $h$ is an epimorphism.
\end{Proposition}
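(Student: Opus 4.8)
The plan is to reduce both parts to elementary set-theoretic identities checked vertex by vertex, using the fact that a sub-object of a product $(T',f')\times(T',f')$ is completely determined by its underlying family of subsets: once the underlying sets at each vertex agree, the subspace topologies agree and the structure maps, being the common restrictions of $f_\alpha\times f_\alpha$, agree as well. Throughout I write $\Delta_{X}=\{(a,a)\mid a\in X\}$ for the diagonal of a set $X$, and I recall that in the sequence under consideration the symbol $1_{(-)}$ in the definition of related exactness is indexed by the \emph{middle} object.

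For (i), the first map $(\emptyset,0)\rightarrow(T,f)$ is the unique morphism out of the zero object; each of its components $\emptyset\rightarrow T(i)$ has empty image, so $Im\,0=(\emptyset,0)$ and hence $Im\,0\times Im\,0=(\emptyset,0)$ and $(Im\,0\times Im\,0)\bigcup 1_{(T,f)}=1_{(T,f)}$. Thus the sequence is related exact precisely when $1_{(T,f)}=Kerh$, i.e. when $\Delta_{T(i)}=\{(a,b)\in T(i)\times T(i)\mid h_i(a)=h_i(b)\}$ for every $i\in\Gamma_0$. The inclusion $\subseteq$ is automatic, so this holds for all $i$ if and only if $h_i(a)=h_i(b)$ forces $a=b$ for every $i$, which is exactly the assertion that each $h_i$ is injective, that is, that $h$ is a monomorphism.

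For (ii), the second map $h'\colon(T',f')\rightarrow(\emptyset,0)$ is the zero morphism: condition $\gamma$ fails, since the hypothesis $|T'(i)|\geq 2$ gives $T'(i)\neq\emptyset$ while the target is empty at $i$. I will adopt (and flag) the convention that the kernel of a zero morphism is the full fibre product, so $Kerh'=(T',f')\times(T',f')$; then the sequence is related exact iff $(Imh_i\times Imh_i)\cup\Delta_{T'(i)}=T'(i)\times T'(i)$ for all $i\in\Gamma_0$. If $h$ is an epimorphism then $Imh_i=T'(i)$ and the identity is immediate. Conversely I would argue by contradiction: if some $h_i$ is not surjective, pick $c\in T'(i)\setminus Imh_i$ and, using $|T'(i)|\geq 2$, pick $d\in T'(i)$ with $d\neq c$; then $(c,d)$ lies in neither $Imh_i\times Imh_i$ (that would force $c\in Imh_i$) nor $\Delta_{T'(i)}$ (that would force $c=d$), contradicting the identity. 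Hence every $h_i$ is surjective and $h$ is an epimorphism.

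There is no real obstacle here; the two points that need care are bookkeeping ones. First, one must settle the convention for the kernel of the zero morphism into $(\emptyset,0)$ so that the statement of (ii) parses at all — taking it to be the entire fibre product is the natural choice and is what makes the dictionary with the usual exactness notions work. Second, one should pinpoint where the hypothesis $|T'(i)|\geq 2$ enters: it is exactly what supplies the witness pair $(c,d)$ in the converse of (ii); without it, a vertex with a one-point $T'(i)$ and $T(i)=\emptyset$ would make the sequence related exact while $h$ fails to be an epimorphism, so the hypothesis is genuinely needed and not merely technical.
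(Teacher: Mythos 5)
Your argument is correct and takes essentially the same route as the paper: both parts are reduced, vertex by vertex, to the set identities $Kerh=1_{(T,f)}$ for (i) and $(Imh_{i}\times Imh_{i})\cup\{(a',a')\mid a'\in T'(i)\}=T'(i)\times T'(i)$ for (ii), which amount exactly to injectivity, respectively (given $|T'(i)|\geq 2$) surjectivity, of each $h_{i}$. Your explicit treatment of the two zero morphisms — in particular reading the kernel of the map to $(\emptyset,0)$ as the full product, and pinpointing where $|T'(i)|\geq 2$ is used — only makes precise conventions the paper's chain of equivalences leaves implicit.
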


\begin{proof} (i) $(\emptyset,0)\rightarrow (T, f)\stackrel
{h}{\rightarrow }(T^{'}, f^{'})$ related exact

$\Longleftrightarrow$ $Kerh=1_{(T, f)}$

$\Longleftrightarrow$ $(Kerh)(i)=\{(a,a)|a\in T(i)\}$, $\forall
i\in \Gamma_{0} $

$\Longleftrightarrow$ $\{(a,b)|a,b\in
T(i),h_{i}(a)=h_{i}(b)\}=\{(a,a)|a\in T(i)\}$, $\forall i\in
\Gamma_{0} $

$\Longleftrightarrow$ $h_{i}(a)=h_{i}(b)$ implies $a=b$, $\forall
i\in \Gamma_{0}$ and  $a,b \in T(i)$

$\Longleftrightarrow$ $h_{i}$ is injective, $\forall i\in
\Gamma_{0} $

$\Longleftrightarrow$ $h$ is monomorphic.

(ii) $(T, f)\stackrel{h}{\rightarrow }(T^{'}, f^{'})\rightarrow
(\emptyset,0)$ related exact

$\Longleftrightarrow$ $(Imh\coprod Imh)\cup
1_{(T^{'}, f^{'})}=(T^{'}, f^{'})\coprod (T^{'}, f^{'})$

$\Longleftrightarrow$ $(Imh_{i}\coprod Imh_{i})\cup
\{(a^{'},a^{'})|a^{'}\in T^{'}(i)\}=T^{'}(i)\coprod T^{'}(i)$,
$\forall i\in \Gamma_{0}$

$\Longleftrightarrow$ if $a^{'},b^{'}\in T^{'}(i)$ and $a^{'}\neq
b^{'}$, then $(a^{'},b^{'})\in Imh_{i}\coprod Imh_{i}$, $\forall
i\in \Gamma_{0}$

$\Longleftrightarrow$ $h_{i}$ is surjective, $\forall i\in
\Gamma_{0}$

$\Longleftrightarrow$ $h$ is epimorphic.
\end{proof}

An object $(T, f)$ is said to be {\em projective} if for an
arbitrary epimorphism $h^{'}:(T^{'}, f^{'})\rightarrow
(T^{''}, f^{''})$, and an arbitrary morphism
$h^{''}:(T, f)\rightarrow (T^{''}, f^{''})$, there exists  a
morphism $h:(T, f)\rightarrow (T^{'}, f^{'})$ such that
$h^{''}=h^{'}h$, i.e. we have the commutative diagram

 $$\xymatrix{
 &&(T,f)\ar[ld]_{h}\ar[d]^{h^{''}}\\
 &(T^{'},f^{'})\ar[r]^{h^{'}}&(T^{''},f^{''})
 }$$
 \;\;\;\;\;\;\;\;\;\;\;\;\;\;\;\;\;\;\;\;\;\;\;\;\;\;\;\;\;\;\;\;\;\;\;\;\;\;\;\;\;\;\;\;\;\;\;\;\;\;\;\;\;\;\;\;\;\;\;\;\;\;\;\;\;\;\;\;\;\;\;\;
\;\;\;$Figure(II)$

 Dually, an object $(T, f)$ is said to be {\em injective}, if for an
 arbitrary monomorphism $h^{'}:(T^{''}, f^{''})\rightarrow (T^{'}, f^{'})$
 and an arbitrary morphism $h^{''}:(T^{''}, f^{''})\rightarrow
 (T, f)$, there exists a morphism $h:(T^{'}, f^{'})\rightarrow
 (T, f)$, such that $h^{''}=hh^{'}$, i.e. we have the commutative diagram

 $$\xymatrix{
 &(T,f)&\\
 &(T^{''},f^{''})\ar[u]^{h^{''}}\ar[r]^{h^{'}}&(T,f)\ar[lu]_{h}
 }$$
 \;\;\;\;\;\;\;\;\;\;\;\;\;\;\;\;\;\;\;\;\;\;\;\;\;\;\;\;\;\;\;\;\;\;\;\;\;\;\;\;\;\;\;\;\;\;\;\;\;\;\;\;\;\;\;\;\;\;\;\;\;\;\;\;\;\;\;\;\;\;\;\;
\;\;\;$Figure(III)$

 As a corollary, the following holds naturally:

\begin{Corollary}
(i) An object $(V,f)$ in the category $\textbf{Top}\mathrm{-}\textbf{Rep}\Gamma$ is
projective (respectively injective, simple, indecomposable) if and
only if $F(V,f)$ is projective (respectively injective, simple,
indecomposable) in the category $P(\Gamma)$-$\mathcal{TOP}^{o}$;

(ii) A sequence $(U,f)\rightarrow (V,g)\rightarrow (W,h)$ in the
category $\textbf{Top}\mathrm{-}\textbf{Rep}\Gamma$ is related exact if and only if the
induced sequence $F(U,f)\rightarrow F(V,g)\rightarrow F(W,h)$ is
related exact in the category $P(\Gamma)$-$\mathcal{TOP}^{o}$.
\end{Corollary}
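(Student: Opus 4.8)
The plan is to deduce the entire statement from the equivalence of Theorem \ref{The2.4}: $F$ and $H$ are mutually inverse functors, so $F$ is full, faithful and essentially surjective, and for all objects $X,Y$ of $\textbf{Top}\mathrm{-}\textbf{Rep}\Gamma$ it induces a bijection $\mathrm{Hom}(X,Y)\to\mathrm{Hom}(FX,FY)$ respecting composition and identities (this bijection is moreover consistent with the ``null object'' conventions, since $(T,f)$ is null exactly when $F(T,f)$ is, and condition $\gamma$ for a pair of objects translates into the analogous condition on their images). The only subtlety is that here \emph{monomorphism}, \emph{epimorphism} and \emph{related exact sequence} are not the abstract categorical notions but are defined through explicit constructions: injectivity/surjectivity of components, the image, the fibre-product kernel, the diagonal sub-object $1_{(-)}$, the coproduct $\coprod$ and the product $\times$. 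So the real work is a list of compatibility checks --- $F$, and symmetrically $H$, must carry each of these constructions to its counterpart in $P(\Gamma)$-$\mathcal{TOP}^{o}$ --- after which (i) and (ii) are formal.

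First I would record the morphism-level facts. For $h=\{h_i\}_{i\in\Gamma_0}$ one has $F(h)=\tilde h$ with $\tilde h|_{T(i)}=h_i$ and $\tilde h(\theta_M)=\theta_{M'}$, and under the identification $M=F(T,f)$ one has $T(i)=e_iM\setminus\{\theta_M\}=:M_i$ and $M=\dot{\cup}_{i}M_i\,\dot{\cup}\,\{\theta_M\}$. Since morphisms in $P(\Gamma)$-$\mathcal{TOP}^{o}$ send $M\setminus\{\theta_M\}$ into $M'\setminus\{\theta_{M'}\}$, $\tilde h$ is injective iff every $h_i$ is injective, and $\tilde h$ maps $M\setminus\{\theta_M\}$ onto $M'\setminus\{\theta_{M'}\}$ iff every $h_i$ is surjective; thus $F$ preserves and reflects monomorphisms and epimorphisms, and the same holds for $H$ via the restrictions $\varphi_i=\varphi|_{M_i}$. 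Then I would check directly from the definition of $F$ that: $F(\emptyset,0)=\{\theta\}$ is the zero object; $F$ takes a sub-object to a sub-object (the $P(\Gamma)$-action restricts correctly and the basepoint is kept), hence, together with $H$ in the reverse direction, $F$ gives an inclusion-preserving bijection between sub-objects of $(V,f)$ and of $F(V,f)$; $F$ carries $\coprod$ to the coproduct of $P(\Gamma)$-$\mathcal{TOP}^{o}$ (identifying the two basepoints) and $\times$ to the product described after Theorem \ref{The2.4}, i.e. componentwise $M_i\times N_i$; and on the $i$-th piece $F(\mathrm{Im}\,h)$, $F(\mathrm{Ker}\,h)$, $F(1_{(T,f)})$ are, respectively, $\mathrm{Im}\,\tilde h|_{M_i}$, $\{(a,b)\in M_i\times M_i:\tilde h(a)=\tilde h(b)\}$ and $\{(a,a):a\in M_i\}$, that is, exactly $\mathrm{Im}\,F(h)$, $\mathrm{Ker}\,F(h)$, $1_{F(T,f)}$. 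Each of these identifications is visibly inverted by $H$, so every compatibility holds in both directions; one should also note that the objects produced by $F$ do satisfy conditions (i)--(iii) of $P(\Gamma)$-$\mathcal{TOP}^{o}$, as already verified for $F(T,f)$ in the proof of Theorem \ref{The2.4}.

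Granting these checks, part (ii) is immediate: the defining equality $(\mathrm{Im}\,h\times\mathrm{Im}\,h)\cup 1_{(V,g)}=\mathrm{Ker}\,h'$ is an identity of sub-objects of $(V,g)\times(V,g)$, and applying the inclusion-preserving bijection induced by $F$ turns it into the corresponding identity for $F(h)$ and $F(h')$, and conversely, while $F(h')F(h)=F(h'h)$. For part (i): projectivity and injectivity are expressed by the lifting/extension diagrams of Figures~(II) and~(III) against epimorphisms, resp. monomorphisms, so since $F$ is full, faithful, essentially surjective and (by the morphism-level facts above) preserves and reflects epi and mono, a routine diagram chase transports both properties in either direction; simplicity transfers because the sub-object bijection sends the zero sub-object to the zero sub-object and proper sub-objects to proper ones; and indecomposability transfers because $F$ respects $\coprod$ and the zero object and is essentially surjective, so a decomposition of either side into two nonzero summands produces one of the other (applying $H$ to go back). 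There is no genuine obstacle here --- the substance is just that an equivalence of categories preserves all categorical data --- and the only real, if routine, labour is the bookkeeping in the second paragraph, where one must be careful about the extra basepoint $\theta_M$ and about the empty-space conventions underlying the zero object and the coproduct.
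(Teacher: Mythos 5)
Your proposal is correct and takes essentially the route the paper intends: the paper states this corollary without a written proof, presenting it as an immediate consequence of the equivalence $F$, $H$ of Theorem \ref{The2.4} together with the explicit definitions of image, kernel, diagonal, sum, product and (mono/epi)morphism. Your compatibility checks (that $F$ and $H$ preserve and reflect these constructions, the zero object and the null/basepoint conventions) are exactly the routine bookkeeping that justification requires, so the argument is sound.
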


\section{Relations between top-representations and linear-representations of quivers }

In this section, every semigroup
mentioned contains a zero element
  and the quiver $\Gamma$ is finite.

 A semigroup S is {\em graded} if there exists a family of
non-empty subsets $\{S_{(i)}\}_{i\in \textbf{Z}}$, where $S_{(0)}$
is a sub-semigroup, $S=\cup _{i\in \textbf{Z}}S_{(i)}$,
$S_{(i)}S_{(j)}\subseteq S_{(i+j)}$, and $S_{(i)}\cap
S_{(j)}=\{0\}$ for $i\neq j$. When $S=\cup _{i\geq 0}S_{(i)}$ is a
graded semigroup, S is called {\bf positively graded}. And a
positively graded semigroup S is called {\bf strongly graded} if
$S_{(i)}S_{(j)}= S_{(i+j)}$ for any $i,j\geq 0$.

Note that, the path semigroup $P(\Gamma)$ which consists of $0$
and all paths in $\Gamma$ has a natural positive gradation:
$P(\Gamma)=\cup _{i\geq 0}(P(\Gamma))_{(i)}$, where
$(P(\Gamma))_{(i)}$ consists of 0 and all the paths whose length
is $i$. This positive gradation of $P(\Gamma)$ is strongly graded
obviously.

 Let $S$ be a graded semigroup, $M$ be a $S$-System in
$S$-$\mathcal{TOP}^{o}$, if there exists a family of nonempty
subsets $\{M_{(i)}\}_{i\in \textbf{Z}}$, such that $M=\cup _{i\in
\textbf{Z}}M_{(i)}$, $S_{(i)}M_{(j)}\subseteq M_{(i+j)}$, and
$M_{(i)}\cap M_{(j)}=\theta_{M}$ for $i\neq j$, then $M$ is said
to be {\bf graded}. Similarly, for a positively graded semigroup $S$,
we can give the definition of positive gradation for $M$.

 Let $M$ be a positively graded $P(\Gamma)$-system in
$P(\Gamma)$-$\mathcal{TOP}^{o}$, where the gradation of
$P(\Gamma)$ is natural. If every homogeneous component is the
union of some $M_{\upsilon}=e_{\upsilon}M$, that is, for every
vertex $\upsilon \in \Gamma_{0} $, $e_{\upsilon}M$ is contained in
some a homogeneous component, then $M$ is said to be {\bf vertex
positively graded}.
 Clearly, if $M$ is positively graded and for any  vertex $\upsilon \in \Gamma_{0} $,
 $M_{\upsilon}$ contains at most one element except for
 $\theta_{M}$, then $M$ is vertex positively graded.

\begin{Definition}(\cite{LC}) (i) Function $F:\Gamma_{0}\longrightarrow \textbf{Z}
$ is called an {\em arrow function} on $\Gamma$, if $F(t(\alpha))=
F(s(\alpha))+1$ for any arrow $\alpha \in \Gamma_{1}$\emph{(see
[4])}.

(ii) If function $F:\Gamma_{0}\longrightarrow
\textbf{Z}^{+}\cup\{0\}$ is an arrow function on $\Gamma$, we call
$F$ an {\em arrow positive function} on $\Gamma$.
\end{Definition}

\begin{Proposition}(\cite{LC})  F is an arrow positive function on a connected quiver
$\Gamma$, $G:\Gamma_{0}\longrightarrow \textbf{Z}^{+}\cup \{0\}$
is another positive function, then G is an arrow positive function
on $\Gamma$ if and only if there exists an integer $k$, such that
$F=G+k$.\end{Proposition}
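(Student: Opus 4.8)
The plan is to prove both directions of the equivalence. For the ``if'' direction, suppose $F = G + k$ for some integer $k$; since $F$ is an arrow function we have $F(t(\alpha)) = F(s(\alpha)) + 1$ for every arrow $\alpha$, and subtracting the constant $k$ from both sides gives $G(t(\alpha)) = G(s(\alpha)) + 1$, so $G$ is an arrow function; it is positive by hypothesis, hence an arrow positive function. This direction is immediate and requires no connectedness.

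For the ``only if'' direction, suppose both $F$ and $G$ are arrow positive functions on $\Gamma$. Consider the difference $D := F - G : \Gamma_0 \longrightarrow \textbf{Z}$. For any arrow $\alpha : i \to j$ we compute $D(j) = F(j) - G(j) = (F(i)+1) - (G(i)+1) = F(i) - G(i) = D(i)$, so $D$ takes the same value on the source and target of every arrow. The key step is then to upgrade this ``constant along each arrow'' property to ``globally constant'' using connectedness of $\Gamma$: first I would observe that $D$ is constant along the underlying undirected graph — if $\alpha : i \to j$ is an arrow then $D(i) = D(j)$ regardless of orientation, so $D$ is constant on every connected component of the underlying graph of $\Gamma$. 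Since $\Gamma$ is connected, its underlying undirected graph is connected (any two vertices are joined by a walk, i.e. a sequence of vertices consecutively linked by arrows in either direction), and hence $D$ is constant on all of $\Gamma_0$; call this constant value $k$. Then $F = G + k$, and $k$ is an integer since $F$ and $G$ both take values in $\textbf{Z}^{+} \cup \{0\} \subseteq \textbf{Z}$. This completes the proof.

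The only real content is the connectedness argument, and the main thing to be careful about is the notion of connectedness being used: a quiver is connected when its underlying undirected graph is connected, so propagation of the value of $D$ must be allowed to traverse arrows in either direction. Everything else is a one-line arithmetic identity. I do not anticipate any serious obstacle; this is essentially the statement that a function on the vertex set of a connected graph that is locally constant across edges is globally constant, applied to the difference of two arrow functions.
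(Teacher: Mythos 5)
Your proof is correct: the ``if'' direction is the one-line arithmetic you give, and the ``only if'' direction correctly reduces to the fact that the difference $D=F-G$ is constant across every arrow and hence, by connectedness of the underlying undirected graph, constant on all of $\Gamma_0$. The paper itself does not reproduce a proof of this proposition (it is quoted from \cite{LC}), but your argument is the standard one and matches the intended reasoning, including the correct reading of connectedness as connectedness of the underlying graph.
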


\begin{Definition}(\cite{LC})  For a non-trivial path $\rho$ in a quiver $\Gamma$, if $s(\rho)=e(\rho)$, we
say it is an  $\textbf{oriented cycle}$. A sub-quiver $\Delta$ of a
quiver $\Gamma$ is said to be a $\textbf{ cycle}$ , if when omitted the
direction of all arrows, the graph, which we call the base graph,
is closed . In a cycle , when the number of clockwise arrows
equals to the number of anti-clockwise arrows, we say the cycle is
 $\textbf{symmetric}$.\end{Definition}

\begin{figure}
\begin{picture}(100,100)(0,0)
\put(170,50){\makebox(0,0){$1\;\bullet$}}
\put(175,52){\line(1,1){25}}\put(202,80){\makebox(0,0){$2\;\bullet$}}
\put(210,80){\line(1,0){30}}\put(250,80){\makebox(0,0){$3\;\bullet$}}
\put(260,70){\makebox(0,0){$\cdot$}}\put(265,65){\makebox(0,0){$\cdot$}}\put(270,60){\makebox(0,0){$\cdot$}}
\put(265,50){\makebox(0,0){$n-2\;\bullet$}}\put(275,45){\line(-1,-1){20}}
\put(265,20){\makebox(0,0){$\bullet\;n-1$}}\put(240,20){\line(-1,0){30}}
\put(200,20){\makebox(0,0){$n\;\bullet$}}\put(198,25){\line(-1,1){20}}
\end{picture}
\begin{center}
{$Figure (IV)$}
\end{center}
\end{figure}

By Definition 3.1 and 3.2, when a quiver has no cycle, we can
always define an arrow positive function on it. And it is clearly
that, an oriented cycle is not symmetric. Indeed, we have the
following conclusions.
\begin{Proposition}(\cite{LC})  Assume  $\Gamma$ is a finite connected quiver. Then there
exists an arrow positive function on $\Gamma$ if and only if $\Gamma$
does not contain any non-symmetric cycle.\end{Proposition}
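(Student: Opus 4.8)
The plan is to reduce the statement to a combinatorial fact about arrow functions and then prove both directions by ``integrating'' the arrow orientations along walks in the underlying graph. First I would record the elementary reduction: since $\Gamma_{0}$ is finite, an arrow positive function exists on $\Gamma$ if and only if an arrow function $F:\Gamma_{0}\to\textbf{Z}$ exists. Indeed an arrow positive function is in particular an arrow function; conversely, given any arrow function $F$, the function $F+k$ with $k=-\min_{v\in\Gamma_{0}}F(v)\geq 0$ takes values in $\textbf{Z}^{+}\cup\{0\}$ and still satisfies $(F+k)(t(\alpha))=(F+k)(s(\alpha))+1$, hence is an arrow positive function. So it suffices to prove that an arrow function on $\Gamma$ exists if and only if $\Gamma$ contains no non-symmetric cycle.

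For necessity, suppose $F$ is an arrow function and let $\Delta$ be a cycle whose base graph is the polygon $v_{0}\,\text{---}\,v_{1}\,\text{---}\,\cdots\,\text{---}\,v_{m-1}\,\text{---}\,v_{0}$. For each $l$ (indices mod $m$) the unique arrow joining $v_{l}$ and $v_{l+1}$ contributes $F(v_{l+1})-F(v_{l})=+1$ if it is oriented $v_{l}\to v_{l+1}$ (say, ``clockwise'') and $F(v_{l+1})-F(v_{l})=-1$ if it is oriented $v_{l+1}\to v_{l}$. Summing over $l$, the left-hand side telescopes to $0$, while the right-hand side is $(\#\,\text{clockwise arrows})-(\#\,\text{anti-clockwise arrows})$; hence these counts coincide, i.e.\ $\Delta$ is symmetric. (In particular an oriented cycle, for which all arrows are clockwise, cannot occur.)

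For sufficiency, assume every cycle of $\Gamma$ is symmetric. Fix $v_{0}\in\Gamma_{0}$, and for any walk $w=(v_{0},v_{1},\dots,v_{n})$ in the underlying graph (each consecutive pair joined by an arrow of $\Gamma$) put $\sigma(w)=\sum_{l=1}^{n}\varepsilon_{l}$, where $\varepsilon_{l}=+1$ if the arrow joining $v_{l-1},v_{l}$ points $v_{l-1}\to v_{l}$ and $\varepsilon_{l}=-1$ otherwise. Since $\Gamma$ is connected, every vertex $v$ is reachable by such a walk, and I want to define $F(v)=\sigma(w)$. Two walks from $v_{0}$ to $v$ differ by a closed walk, so the point is that $\sigma$ vanishes on every closed walk; I would prove this by induction on length. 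A closed walk containing an immediate backtrack shortens without changing $\sigma$ (the two opposite contributions cancel); a closed walk that repeats a vertex factors as a product of two strictly shorter closed walks whose $\sigma$-values add to the original; and a closed walk with neither a backtrack nor a repeated vertex is precisely a cycle in the sense of the preceding definition (including the degenerate digon case), on which $\sigma=(\#\,\text{clockwise})-(\#\,\text{anti-clockwise})=0$ by the symmetry hypothesis. Thus $F$ is well-defined; appending an arrow $\alpha:i\to j$ to a walk ending at $i$ raises $\sigma$ by $1$, so $F(t(\alpha))=F(s(\alpha))+1$, i.e.\ $F$ is an arrow function, and by the reduction $\Gamma$ admits an arrow positive function.

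The only non-formal step is the well-definedness of $F$, i.e.\ that $\sigma$ vanishes on all closed walks once it vanishes on all cycles; equivalently, that the cycle space of the underlying graph is generated by simple cycles. The induction above handles this, but one can alternatively observe that $\sigma(w)$ is the pairing of the $1$-cochain taking value $1$ on every edge (with the orientation given by its arrow) against the $1$-cycle carried by $w$, and invoke that $H_{1}$ of a graph is freely generated by the fundamental cycles of a spanning tree, each of which is simple; symmetry of all cycles forces the cochain to pair to zero with each generator, hence with every closed walk.
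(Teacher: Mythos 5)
The paper itself gives no proof of this proposition: it is quoted from \cite{LC}, so there is no in-paper argument to compare yours against. Your proof is correct and complete, and it is the natural one: the telescoping of $F(v_{l+1})-F(v_l)$ around the base polygon gives necessity, and the potential-function construction gives sufficiency, with well-definedness correctly reduced (via the backtrack/repeated-vertex induction, or equivalently the spanning-tree generation of the cycle space) to the vanishing of the signed edge count on simple cycles, where the symmetry hypothesis applies --- and you rightly note the digon case, while loops are automatically excluded as non-symmetric oriented cycles; connectedness and finiteness enter exactly where they should, in defining $F$ everywhere and in shifting it to nonnegative values. The only cosmetic slip is the assertion $k=-\min_{v\in\Gamma_0}F(v)\geq 0$, which need not hold but is also not needed, since $F+k$ has minimum value $0$ in any case.
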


\begin{Lemma}   For a finite quiver $\Gamma$, if all $P(\Gamma)$-Systems in
$P(\Gamma)$-$\mathcal{TOP}^{o}$ are positively graded, then any
cycle in $\Gamma$ is symmetric.\end{Lemma}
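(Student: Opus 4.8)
The plan is to argue by contraposition: suppose $\Gamma$ contains a non-symmetric cycle $\Delta$, and construct a $P(\Gamma)$-System $M$ in $P(\Gamma)\text{-}\mathcal{TOP}^{o}$ that admits no positive gradation. The natural candidate is the ``free'' or regular-type system supported on the cycle: take $M$ to have one non-$\theta_M$ point $x_v$ over each vertex $v$ lying on $\Delta$ (and $\theta_M$ over every other vertex), with the arrows of $\Delta$ acting as the obvious bijections between consecutive points and all other paths acting by $\theta_M$. One must first check this really is an object of $P(\Gamma)\text{-}\mathcal{TOP}^{o}$: conditions (i) $P(\Gamma)M=M$ and (ii) the isolated-point condition on $\theta_M$ are immediate for the discrete topology, and (iii) ``$e_i m\neq\theta_M\Rightarrow \alpha m\neq\theta_M$ for all arrows $\alpha$ from $i$'' forces a small adjustment — we may need to restrict $\Delta$ to a minimal cycle so that the only arrows emanating from vertices on $\Delta$ are those of $\Delta$ itself, or otherwise enlarge $M$ along forward paths; I would handle this by choosing $\Delta$ to be a cycle whose base graph is a simple closed curve and, if necessary, absorbing the forced forward orbits into $M$, which does not affect the gradation obstruction.

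The core of the argument is then the following: if $M$ were positively graded, the grading would induce a function on the vertices of $\Delta$. Concretely, for a vertex $v$ on $\Delta$ let $d(v)$ be the degree $i$ with $x_v\in M_{(i)}$ (well-defined since the $M_{(i)}$ meet only in $\theta_M$). For an arrow $\alpha:i\to j$ of $\Delta$ we have $\alpha x_i = x_j$ and $\alpha\in (P(\Gamma))_{(1)}$, so $S_{(1)}M_{(d(i))}\subseteq M_{(d(i)+1)}$ gives $d(j)=d(i)+1$. Thus $d$ behaves like an arrow function along every arrow of $\Delta$, increasing by $1$ on each forward arrow. Traversing the base graph of $\Delta$ once around and returning to the start, each clockwise arrow contributes $+1$ and each anti-clockwise arrow contributes $-1$ to the net change of $d$; since we return to the same vertex the net change is $0$, forcing the number of clockwise arrows to equal the number of anti-clockwise arrows. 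That is exactly the statement that $\Delta$ is symmetric, contradicting the choice of $\Delta$. Hence no non-symmetric cycle can exist once every system is positively graded.

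The main obstacle I expect is purely bookkeeping around condition (iii) and the topology: I must make sure the constructed $M$ genuinely lies in $P(\Gamma)\text{-}\mathcal{TOP}^{o}$ rather than merely being an abstract $P(\Gamma)$-set, and that enlarging $M$ to satisfy (iii) does not accidentally create a positive gradation it would otherwise lack. Giving $M$ the discrete topology makes the topological axioms automatic, so the real work is the combinatorial closure of the vertex support of $M$ under ``forward'' arrows; one clean way around this is to first reduce, using Proposition 3.4, to the case where $\Gamma$ itself is a finite connected quiver equal to a single non-symmetric cycle, prove the statement there, and then note that a non-symmetric cycle inside a larger $\Gamma$ can be used to build a system on all of $\Gamma$ by extending $\theta_M$ over the remaining vertices and the forced forward orbits. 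The degree-counting step itself is short and mirrors the proof of Proposition 3.4; once the system is in hand, the contradiction is essentially immediate.
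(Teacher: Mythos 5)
Your proposal is essentially the paper's own argument: exploit the assumed positive grading of a system with (at most) one non-$\theta_M$ point over each vertex to obtain a degree function $d$ with $d(t(\alpha))=d(s(\alpha))+1$ for every arrow $\alpha$ acting nontrivially, and then conclude symmetry of the cycle by the clockwise/anticlockwise count (the paper delegates this last step to its result that an arrow positive function exists only when every cycle is symmetric). The one difference is that the paper sidesteps all of your condition-(iii) bookkeeping by taking the test system supported at \emph{every} vertex of $\Gamma$, with a single point per vertex and all arrows acting as the identity, which makes (iii) automatic; of your two proposed patches, the ``restrict to a minimal cycle'' one does not work in general (a vertex on the cycle may still have arrows leaving it), whereas the ``close up under forward orbits'' one — or simply the paper's all-of-$\Gamma$ choice — does, and the grading argument is unaffected since each fiber still has at most one non-$\theta_M$ point.
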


\begin{proof}:  Suppose $\Gamma$ contains a cycle $\Delta$ with the
base graph like $Figure (IV)$,
 consider a special $P(\Gamma)$-System $M$ in $P(\Gamma)$-$\mathcal{TOP}^{o}$, its
 top-representation according to the equivalence in Theorem 2.3 is $(T, f)$, where all $S(\upsilon)$
 are equal and contain only one element, the maps between them are
 all identity maps.

 Since $M$ is positively graded, from its special construction it is also vertex positively graded.
Define a function $F:\Gamma_{0}\longrightarrow \textbf{Z}^{+}\cup
\{0\}$ as follows: $F(\upsilon)=i$, if $e_{\upsilon}M\subseteq
M_{(i)}$. It is easy to know $F$ is an arrow positive function on
$\Gamma$, and so it is on $\Delta$. Indeed, if for an arrow
$\alpha:\upsilon\rightarrow\omega$, $F(\upsilon)=i$, then from the
construction of $M$, $e_{\omega}M=\alpha (e_{\upsilon}M)\subseteq
\alpha M_{(i)}\subseteq P(\Gamma)_{(1)}M_{(i)}\subseteq
M_{(i+1)}$, i.e. $F(\omega)=F(\upsilon)+1$. By Lemma 3.2, $\Delta$
is a symmetric cycle.
\end{proof}

Thus, we get the main result of this section:

\begin{Theorem}\label{topology}  Suppose $\Gamma$ is a finite connected quiver.
The following properties are equivalent:

(i) all $P(\Gamma)$-Systems in $P(\Gamma)$-$\mathcal{TOP}^{o}$ are
positively graded;

(ii) any cycle in $\Gamma$ is symmetric;

(iii) there exists an arrow positive function on $\Gamma$;

(iv) all $P(\Gamma)$-Systems in $P(\Gamma)$-$\mathcal{TOP}^{o}$
are vertex positively graded.

\end{Theorem}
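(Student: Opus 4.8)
The plan is to establish the cycle of implications $(i)\Rightarrow(ii)\Rightarrow(iii)\Rightarrow(iv)\Rightarrow(i)$. The implication $(i)\Rightarrow(ii)$ is exactly Lemma 3.4, which has already been proved, so nothing further is needed there. For $(ii)\Rightarrow(iii)$, I would simply invoke Proposition 3.3 (quoted from \cite{LC}): for a finite connected quiver $\Gamma$, there exists an arrow positive function on $\Gamma$ if and only if $\Gamma$ contains no non-symmetric cycle; since $(ii)$ says every cycle is symmetric, $\Gamma$ has no non-symmetric cycle and hence admits an arrow positive function.

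For $(iii)\Rightarrow(iv)$, I would use the given arrow positive function $F:\Gamma_{0}\to\mathbf{Z}^{+}\cup\{0\}$ to induce a positive gradation on an arbitrary $M\in P(\Gamma)$-$\mathcal{TOP}^{o}$ that is \emph{vertex} positive. The recipe: for each $n\geq 0$ set
\[
M_{(n)}=\bigcup_{\substack{\upsilon\in\Gamma_{0}\\ F(\upsilon)=n}}\big(e_{\upsilon}M\big),
\]
adopting the convention that the "zero point" $\theta_{M}$ lies in every $M_{(n)}$ so that each homogeneous component is nonempty (and take $M_{(n)}=\{\theta_M\}$ when no vertex has $F$-value $n$). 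Since $M=\dot{\bigcup}_{\upsilon}(e_{\upsilon}M\setminus\{\theta_M\})\,\dot{\cup}\,\{\theta_M\}$ by the decomposition noted right after the definition of $P(\Gamma)$-$\mathcal{TOP}^{o}$, we get $M=\bigcup_{n\geq 0}M_{(n)}$ and $M_{(n)}\cap M_{(m)}=\{\theta_M\}$ for $n\neq m$. The key compatibility $P(\Gamma)_{(k)}M_{(n)}\subseteq M_{(n+k)}$ is checked on paths: for a path $\rho$ of length $k$ from $\upsilon$ to $\omega$ and $m\in e_{\upsilon}M$, the arrow-function property $F(t(\alpha))=F(s(\alpha))+1$ iterated along $\rho$ gives $F(\omega)=F(\upsilon)+k$, and $\rho m\in e_{\omega}M\subseteq M_{(n+k)}$ (while if $\rho$ does not start at $\upsilon$, $\rho m=\theta_M\in M_{(n+k)}$); the case $\rho=0$ sends everything to $\theta_M$, which is in every component. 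Thus $M$ is positively graded, and by construction each homogeneous component is a union of sets $e_{\upsilon}M$, so $M$ is vertex positively graded. Finally $(iv)\Rightarrow(i)$ is immediate, since a vertex positively graded $P(\Gamma)$-System is in particular positively graded.

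The only genuinely delicate point is the bookkeeping around $\theta_{M}$ and the non-emptiness requirement in the definition of a graded $S$-System: one must be careful that the family $\{M_{(n)}\}$ is indexed so that every component is nonempty (which forces $\theta_M$ into all of them) and still satisfies $M_{(n)}\cap M_{(m)}=\{\theta_M\}$ and the grading inclusions — but this is routine once the convention $\theta_M\in M_{(n)}$ for all $n$ is fixed, exactly as in the positively graded semigroup case. Everything else is a direct appeal to Lemma 3.4 and Proposition 3.3 together with the definitions of arrow (positive) function and (vertex) positive gradation.
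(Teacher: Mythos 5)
Your proposal is correct and follows essentially the same route as the paper: (i)$\Rightarrow$(ii) by the preceding lemma, (ii)$\Rightarrow$(iii) by the quoted result from \cite{LC} on arrow positive functions, (iii)$\Rightarrow$(iv) via the same gradation $M_{(n)}=\bigcup_{F(\upsilon)=n}e_{\upsilon}M$, and (iv)$\Rightarrow$(i) by definition. Your extra care with $\theta_M$ and with paths of arbitrary length (the paper only checks single arrows) is a harmless refinement, not a different argument.
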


\begin{proof}: (i)$\Rightarrow$(ii): By Lemma  3.5.

(ii)$\Rightarrow$(iii): By Lemma 3.3.

(iii)$\Rightarrow$(iv):  Suppose $F:\Gamma_{0}\longrightarrow
\textbf{Z}^{+}\cup \{0\}$ is an arrow positive function on quiver
$\Gamma$, since for any $P(\Gamma)$-System $M$ in
$P(\Gamma)$-$\mathcal{TOP}^{o}$, $M=\cup_{\upsilon\in
\Gamma_{0}}e_{\upsilon}M$, let $M_{(i)}=\cup_{\upsilon\in
\Gamma_{0},F(\upsilon)=i}e_{\upsilon}M$, then
$M=\cup_{F(\upsilon)=i,\upsilon\in \Gamma_{0}} M_{(i)}$ is a
positive gradation. Actually, for any arrow $\alpha$ in
$\Gamma_{1}$, we have $F(t(\alpha))=F(s(\alpha))+1$. Then when
$i\neq F(s(\alpha))$, $\alpha M_{(i)}=\theta_{M}\subseteq
M_{(i+1)}$, since $\alpha M=\alpha e_{s(\alpha)}M$. And from the
definition of $M_{(i)}$ and $\alpha M_{(F(s(\alpha)))}\subseteq
e_{t(\alpha)}M\subseteq M_{(F(t(\alpha)))}=M_{(F(s(\alpha))+1)}$,
we know $M$ is vertex positively graded.

(iv)$\Rightarrow$(i): By the definition of vertex positively
graded.
\end{proof}

\begin{Example}
Let $\Gamma$ be a quiver as
$1\stackrel{\alpha_{1}}{\rightarrow}2\stackrel{\alpha_{2}}{\rightarrow}3\stackrel{\alpha_{3}}{\rightarrow}\cdots\stackrel{\alpha_{n-1}}{\rightarrow}n$. Then all $P(\Gamma)$-Systems in $P(\Gamma)$-$\mathcal{TOP}^{o}$
are positively graded.
\end{Example}
 Indeed, if $M$ is a $P(\Gamma)$-System of $P(\Gamma)$-$\mathcal{TOP}^{o}$,
 let $M_{(i)}=e_{i}M$, $i=1, 2, \cdots, n$. From the proof of Theorem 2.3, we know $M=\cup_{i=1}^{n}(e_{i}M\setminus \theta_{M})\cup
 \theta_{M}$ and $(e_{i}M\setminus \theta_{M})\cap (e_{j}M\setminus
 \theta_{M})=\emptyset$ when $i\neq j$, so
 $M=\cup_{i=1}^{n}M_{(i)}$ and $M_{(i)}\cap M_{(j)}=\theta_{M}$
 when $i\neq j$. And the inclusion $(P(\Gamma))_{(i)}M_{(j)}\subseteq
 M_{(i+j)}$is also easy to prove, here $M_{(i)}=\theta_{M}$
 for any $i>n$.

In final, we discuss the relation between the
category of top-representations and that of linear-representations.

\begin{Theorem}\label{module}(\cite{W})$\;$
Let $\Gamma$ be a finite connected quiver with  the
corresponding path algebra $k\Gamma$. Then the following statements are
equivalent:

(i) all $k\Gamma$-modules are graded;

(ii) any cycle in $\Gamma$ is symmetric;

(iii) there exists an arrow function on $\Gamma$;

(iv) all $k\Gamma$-modules are vertex graded.

\end{Theorem}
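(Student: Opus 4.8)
My plan is to prove the cycle of implications $(iii)\Rightarrow(iv)\Rightarrow(i)\Rightarrow(ii)\Rightarrow(iii)$, essentially transcribing the proof of Theorem~\ref{topology} from $P(\Gamma)$-$\mathcal{TOP}^{o}$ to the category $k\Gamma$-Mod: $P(\Gamma)$-systems get replaced by $k\Gamma$-modules, the distinguished point $\theta_{M}$ by the zero vector, and disjoint unions by direct sums. The one ring-theoretic input I would record first is that, $\Gamma$ being finite, $1=\sum_{v\in\Gamma_{0}}e_{v}$ is a decomposition of the unit of $k\Gamma$ into orthogonal idempotents, so that every $k\Gamma$-module $M$ splits as a $k$-space $M=\bigoplus_{v\in\Gamma_{0}}e_{v}M$; moreover the path-length gradation $k\Gamma=\bigoplus_{n\geq 0}(k\Gamma)_{(n)}$ has $(k\Gamma)_{(0)}=\bigoplus_{v}ke_{v}$ and $(k\Gamma)_{(1)}=\bigoplus_{\alpha\in\Gamma_{1}}k\alpha$, with $\alpha\cdot e_{v}M\subseteq e_{t(\alpha)}M$ and $\alpha\cdot e_{v}M=0$ whenever $v\neq s(\alpha)$.

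For $(iii)\Rightarrow(iv)$ I would take an arrow function $F\colon\Gamma_{0}\to\textbf{Z}$ and, for an arbitrary $k\Gamma$-module $M$, set $M_{(i)}=\bigoplus_{v:\,F(v)=i}e_{v}M$ for $i\in\textbf{Z}$. Then $M=\bigoplus_{i}M_{(i)}$, and from the identity $F(t(\alpha))=F(s(\alpha))+1$ one checks directly that $(k\Gamma)_{(0)}M_{(i)}\subseteq M_{(i)}$ and $(k\Gamma)_{(1)}M_{(i)}\subseteq M_{(i+1)}$, hence $(k\Gamma)_{(n)}M_{(i)}\subseteq M_{(i+n)}$ for all $n\geq 0$; since by construction each $M_{(i)}$ is a direct sum of the spaces $e_{v}M$, this grading is a vertex grading, which is $(iv)$. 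The implication $(iv)\Rightarrow(i)$ is immediate, a vertex grading being in particular a grading.

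For $(i)\Rightarrow(ii)$, suppose $\Gamma$ contains a cycle $\Delta$ (base graph as in Figure~(IV)); I would show it is symmetric by exhibiting, with the help of $(i)$, an arrow function on $\Delta$. Consider the $k\Gamma$-module $M$ corresponding, under the equivalence between {\bf Lin-Rep}$\Gamma$ and $k\Gamma$-Mod, to the linear representation with $M(v)=k$ for every vertex and $f_{\alpha}=\mathrm{id}_{k}$ for every arrow — the linear analogue of the test system used in Lemma~3.5. By $(i)$ this $M$ is graded, $M=\bigoplus_{i}M_{(i)}$; since $e_{v}\in(k\Gamma)_{(0)}$ we get $e_{v}M=\bigoplus_{i}e_{v}M_{(i)}$, and $\dim_{k}e_{v}M=1$ forces exactly one summand to be nonzero, so I can define $F(v)$ by $e_{v}M\subseteq M_{(F(v))}$. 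For an arrow $\alpha\colon v\to w$ of $\Delta$, $\alpha$ acts as an isomorphism $e_{v}M\to e_{w}M$, so $0\neq\alpha M_{(F(v))}\subseteq M_{(F(v)+1)}$ forces $e_{w}M\subseteq M_{(F(v)+1)}$, i.e. $F(w)=F(v)+1$. Thus $F$ restricts to an arrow function on $\Delta$, and traversing the base graph of $\Delta$ once while summing the increments of $F$ — each arrow crossed along its orientation contributing $+1$ and each crossed against its orientation contributing $-1$ — yields $0$; hence $\Delta$ has equally many clockwise and anti-clockwise arrows, that is, $\Delta$ is symmetric.

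Finally, $(ii)\Rightarrow(iii)$ is purely graph-theoretic, and I would deduce it from the Proposition recalled above (a finite connected quiver admits an arrow positive function precisely when it has no non-symmetric cycle), together with the remark that on a finite quiver every arrow function into $\textbf{Z}$ is bounded below, hence differs by a global constant from an arrow positive function, so that the existence of an arrow function is equivalent to that of an arrow positive function. The step I expect to be the main obstacle is $(i)\Rightarrow(ii)$: I must be careful that the test module is graded only by virtue of hypothesis $(i)$ (a generic $k\Gamma$-module need not be graded), that one-dimensionality of each $e_{v}M$ genuinely confines it to a single homogeneous component, and that the resulting numerical function is an honest arrow function all the way around $\Delta$. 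Granting these points, the passage ``arrow function on a cycle $\Rightarrow$ symmetric cycle'' is the same bookkeeping as in the proof of Lemma~3.5, and the remaining steps are routine transcriptions of their counterparts for $P(\Gamma)$-$\mathcal{TOP}^{o}$.
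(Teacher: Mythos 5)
Your proof is correct, but note that the paper itself offers no argument for this statement: Theorem \ref{module} is quoted from \cite{W} as a known result, and the surrounding text only combines it with Theorem \ref{topology} through the common condition (ii). What you have done is supply a self-contained proof by transcribing the paper's argument for Theorem \ref{topology} (the gradation constructions in its proof and the test object of Lemma 3.5) into $k\Gamma$-Mod, and the transcription goes through: the decomposition $M=\bigoplus_{v}e_{v}M$ coming from $1=\sum_{v}e_{v}$ replaces $M=\dot{\cup}_{i}M_{i}\dot{\cup}\{\theta_{M}\}$; your $(iii)\Rightarrow(iv)$ is the exact analogue of the paper's $(iii)\Rightarrow(iv)$; your $(i)\Rightarrow(ii)$ uses the module corresponding to the one-dimensional-at-every-vertex representation with identity maps, which is the linear analogue of the one-point system in Lemma 3.5, and the one-dimensionality of each $e_{v}M$ does confine it to a single homogeneous component because $e_{v}M=\bigoplus_{i}e_{v}M_{(i)}$ with $e_{v}M_{(i)}\subseteq M_{(i)}$; and your explicit telescoping argument (sum of $F$-increments around the closed base graph is zero) replaces the paper's appeal to the cited \cite{LC} results for ``arrow function on a cycle implies symmetric.'' Your $(ii)\Rightarrow(iii)$ via Proposition 3.4 plus the observation that on a finite quiver an arrow function and an arrow positive function differ by a constant is exactly the bridge the paper itself uses when it remarks that gradation and positive gradation statements are interchangeable. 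The only caveat worth recording is definitional: you should state explicitly (as you implicitly assume) that ``graded'' and ``vertex graded'' for $k\Gamma$-modules mean a $\mathbf{Z}$-grading compatible with the path-length grading of $k\Gamma$, with vertex grading meaning each $e_{v}M$ lies in a single homogeneous component; with the conventions of \cite{W} this is the intended meaning, and with it your cycle of implications $(iii)\Rightarrow(iv)\Rightarrow(i)\Rightarrow(ii)\Rightarrow(iii)$ is complete, whereas the paper's reader must consult \cite{W}.
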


By Theorem \ref{topology} and Theorem \ref{module}, we know that for a finite
connected quiver $\Gamma$, there is an arrow function on it if and
only if there is an arrow positive function on it. Since all the
proofs in [4] about gradation were based on positive gradation,
all conclusions about gradation in [4] can be equivalently
replaced by the ones about positive gradation. Similarly, our
results about positive gradation in this paper can be equivalently
replaced by the ones about gradation. Then through the common
statement (ii) in Theorem \ref{topology} and Theorem \ref{module}, we have a
collection of equivalent statements. In particular, we have the
following corollaries:












\begin{Corollary}\label{Cor3.9}
Let $\Gamma$ be a finite connected quiver and $k$ a field, then
the following statements are equivalent:

(i) all $k\Gamma$-modules are (positively) graded;

(ii) all $P(\Gamma)$-Systems in $P(\Gamma)$-$\mathcal{TOP}^{o}$
are (positively) graded.
\end{Corollary}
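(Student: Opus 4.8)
The plan is to reduce both sides of the claimed equivalence to the single combinatorial condition that every cycle of $\Gamma$ is symmetric, which has already been shown to control each side separately.

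First I would invoke Theorem \ref{topology}: for a finite connected quiver, condition (ii) of the corollary --- that all $P(\Gamma)$-Systems in $P(\Gamma)$-$\mathcal{TOP}^{o}$ are positively graded --- is exactly equivalent to ``any cycle in $\Gamma$ is symmetric'' (this is the equivalence of its clauses (i) and (ii)). Next I would invoke Theorem \ref{module}: for a finite connected quiver with path algebra $k\Gamma$, condition (i) of the corollary --- that all $k\Gamma$-modules are graded --- is also equivalent to ``any cycle in $\Gamma$ is symmetric''. Concatenating these two biconditionals through their common middle term immediately yields the corollary in the normalization ``(i) graded, (ii) positively graded''.

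Second, I would remove the parentheses, i.e. show that on a connected quiver ``all $k\Gamma$-modules are graded'' and ``all $k\Gamma$-modules are positively graded'' are the same condition, and likewise for $P(\Gamma)$-Systems. One direction is trivial, since a positively graded object is in particular graded. For the converse the key input is the cited result of \cite{LC} that on a connected quiver an arrow function and an arrow positive function exist simultaneously (any two such vertex-degree functions differ by a single additive integer); as the discussion preceding the corollary records, the gradation arguments underpinning Theorem \ref{module} were in fact carried out for positive gradation, so that theorem holds verbatim with ``graded'' replaced by ``positively graded'', and symmetrically Theorem \ref{topology} already lists both the ``positively graded'' and the ``vertex positively graded'' forms as equivalent to cycle-symmetry. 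Re-indexing a given graded object by that constant shift renders its support nonnegative and remains compatible with the action (resp. multiplication) precisely because the vertex-degree function is pinned down up to that constant. Hence all four variants --- (i)/(ii) in either the graded or the positively graded normalization --- are mutually equivalent, and the corollary follows.

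The step I expect to need the most care is this shift normalization: checking that a \emph{single} graded module or system over a connected, arrow-function-admitting quiver can be consistently re-graded into nonnegative degrees. Because $\Gamma$ is finite and connected and the vertex degrees are determined up to one additive integer, this is bounded bookkeeping rather than a genuine obstacle; the only point to be explicit about is which direction of Theorem \ref{module} and Theorem \ref{topology} is being used, and in which (positive or non-positive) form.
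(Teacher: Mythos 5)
Your proposal is correct and follows essentially the same route as the paper: the corollary is obtained by passing through the common statement ``any cycle in $\Gamma$ is symmetric'' in Theorem \ref{topology} and Theorem \ref{module}, with the graded/positively graded variants identified via the fact that an arrow function exists iff an arrow positive function exists (the constant-shift argument you spell out is exactly the paper's informal remark that results about gradation and positive gradation can be interchanged).
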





\begin{Corollary}\label{Cor3.10}
Let $\Gamma$ be a finite connected quiver and $k$ a field, then
the following statements are equivalent:

(i) all $k\Gamma$-modules are vertex (positively) graded;

(ii) all $P(\Gamma)$-Systems in $P(\Gamma)$-$\mathcal{TOP}^{o}$
are vertex (positively) graded.
\end{Corollary}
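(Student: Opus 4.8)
The plan is to deduce Corollary \ref{Cor3.10} from Theorem \ref{topology} and Theorem \ref{module} by routing both conditions through the shared combinatorial criterion that $\Gamma$ contains no non-symmetric cycle, exactly in the spirit of Corollary \ref{Cor3.9}. First I would read off from Theorem \ref{module} the equivalence of its clauses (ii) and (iv): all $k\Gamma$-modules are vertex graded if and only if every cycle in $\Gamma$ is symmetric. Symmetrically, Theorem \ref{topology} (clauses (ii) and (iv)) gives that all $P(\Gamma)$-Systems in $P(\Gamma)$-$\mathcal{TOP}^{o}$ are vertex positively graded if and only if every cycle in $\Gamma$ is symmetric. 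Concatenating these two biconditionals through the common statement (ii) already yields the equivalence between the ``vertex graded'' form of (i) and the ``vertex positively graded'' form of (ii).

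It then remains to absorb the parenthetical ``(positively)'' on both sides, i.e.\ to check that on a finite connected quiver being vertex graded and being vertex positively graded coincide, both in the module category and in $P(\Gamma)$-$\mathcal{TOP}^{o}$. This is where Proposition 3.2 enters: on a connected quiver any two arrow(-positive) functions differ by a constant, so an arrow function exists precisely when an arrow positive function exists (shift by a sufficiently large constant to render it nonnegative). Hence clause (iii) of Theorem \ref{topology} and clause (iii) of Theorem \ref{module} name literally the same property of $\Gamma$, and both theorems pin their gradation and vertex-gradation clauses to that property; the normalization of an arbitrary gradation to a positive one, already invoked after Theorem \ref{module} on the module side, carries over verbatim to $S$-systems. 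Consequently each of the four notions --- $k\Gamma$-modules vertex graded, $k\Gamma$-modules vertex positively graded, $P(\Gamma)$-Systems vertex graded, $P(\Gamma)$-Systems vertex positively graded --- is equivalent to ``every cycle in $\Gamma$ is symmetric,'' and Corollary \ref{Cor3.10} follows.

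In effect the proof is bookkeeping on top of the two cited theorems; the only place where something could slip is the passage between the plain and the positive variants, and that passage is controlled entirely by Proposition 3.2 --- hence by the standing hypothesis that $\Gamma$ is connected, which makes the constant shift available. I therefore expect no substantive obstacle, merely the need to lay out the chain of equivalences cleanly and to state explicitly why the positive and non-positive forms of vertex gradation are interchangeable in both categories.
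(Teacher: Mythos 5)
Your proposal is correct and follows essentially the same route as the paper: the paper also obtains Corollary \ref{Cor3.10} by chaining the vertex-gradation clauses of Theorem \ref{topology} and Theorem \ref{module} through their common clause (ii) (every cycle symmetric), and then invokes the interchangeability of the graded and positively graded formulations, justified by the equivalence of arrow functions and arrow positive functions on a finite connected quiver. Your only (minor) deviation is deriving that last equivalence directly from Proposition 3.2 via a constant shift, whereas the paper reads it off by matching clause (iii) of the two theorems; the content is the same.
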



From the two theorems above, we find that on a finite connected
quiver $\Gamma$, there are some interesting relations between the
 categories $P(\Gamma)$-$\mathcal{TOP}^{o}$ and $k\Gamma$-Mod,
one of which is not abelian while the other is. Since the
top-representation category $\textbf{Top-Rep}\Gamma$ is equivalent to
the category $P(\Gamma)$-$\mathcal{TOP}^{o}$, and the
linear-representation category {\bf Lin-Rep}$\Gamma$ is equivalent
to the category $k\Gamma$-Mod, there are also some similar
relations between the two representation categories $\textbf{Top-Rep}\Gamma$ and {\bf Lin-Rep}$\Gamma$.


\section{Homology groups of top-representations}

In \cite{HJ}, for a quiver $\Gamma$,  Henrik Holm and P. Jorgensen introduced a $\Gamma$-bundle associated to two chains.
In \cite{CJ}, Bustamante J C, Dionne J and Smith D discussed homology theories for oriented algebras.
In this section we hope to associate a quiver to chain complexes and use the homology groups of these chain complexes to reflect the quiver.

We recall from \cite{AT} some basic notations in algebraic topology. For any topological space and each integer $n\geq0$,  $S_n(X)$ denoted as the free abelian group with basis all continuous maps from $\Delta^n$ to $X$.  For any positive integer $n$, there are $n+1$ continuous maps $\varepsilon_i^n(0\leq i\leq n)$ from $\Delta^{n-1}$ to
$\Delta^{n}$($\Delta^n$ is the standard n-simplex), defined as
\begin{equation}
\varepsilon_i^n(t_0,t_1,\dots,t_{n-1})=(t_0,t_1,\dots,t_{i-1},0,t_{i+1}\dots,t_n)
\end{equation}
Then for each $n\geq0$, boundary operator $\partial_n$ from $S_n(X)$ to $S_{n-1}(X)$, defined as
\begin{equation}
\partial_n(f)=\sum_{i=1}^{i=n}(-1)^if\varepsilon_i^n
\end{equation}
 where $f$ is a continuous map from $\Delta^n$ to $X$, $S_{-1}(X)=0$, and $\partial\partial=0$. $S(X)$ denotes the following chain complex:
  \begin{equation}
  \xymatrix{ &\cdots\ar[r]&S_2(X)\ar[r]^{\partial_2}&S_1(X)\ar[r]^{\partial_1}&S_0(X)\ar[r]&0
  }
  \end{equation}

  If $f:X\rightarrow Y$ is countinuous and if $\alpha:\Delta^n\rightarrow X$ is an n-simplex in $X$, then $f\alpha$ is an n-simplex in $Y$. Extending by linearity gives a homomorphism $S_n(f):S_n(X)\rightarrow S_n(Y)$. Then, define $(S(f))_{n}=S_n(f)$ for all $n\geq-1$. Obviously, $S$ is a functor from $\textbf{Top}$ to $\textbf{C}$, where $\textbf{Top}$ is the category of topological spaces with  continuous functions as morphisms and $\textbf{C}$ is the category of chain complexes of abelian groups with chain maps as morphisms.
whose object is a chain complex of abelian groups,  written as $\mathop{A}\limits^{\bullet}$.

  For a continuous map between two topological spaces $f:X\rightarrow Y$, $S(f)$ denoted as $f_*$.
Then for each $n\geq0$, $H_n(X)=H_n(S(X))$. And, $H_n(f)=H_n(f_*):H_n(X)\rightarrow H_n(Y)$. It is easy to show $H_n$ is a functor from
$\textbf{Top}$ to $\textbf{Ab}$ for each $n\geq0$. For the emptyset $\emptyset$, we set $S(\emptyset)=0$. The coset $z_n+B_n(X)$, where $z_n$ is an $n$-cycle, is called the {\bf homology class of $z_n$}, and is denoted as $clsz_n$  (see \cite{AT}).

We further introduce some notations. Let $\textbf{C}\mathrm{-}\textbf{Rep}\Gamma$ be the category of $\Gamma$-representations via chain complexes of abelian groups, that is, for any $\Gamma$-representation via chain complexes of abelian groups,
 $(\mathop{C}\limits^{\bullet},f)\in\textbf{C}\mathrm{-}\textbf{Rep}$, $\mathop{C}\limits^{\bullet}(i)$ is a chain complex of abelian groups and $f_{ij}$ is the chain morphism for any arrow $i\rightarrow j$ in $\Gamma$.

  Let $\textbf{Ab}\mathrm{-}\textbf{Rep}\Gamma$ be the category of $\Gamma$-representations via abelian groups, that is, for
 any $\Gamma$-representation via abelian groups $(G,f)\in\textbf{Ab}\mathrm{-}\textbf{Rep}\Gamma$, $G(i)$ is an abelian group and $f_{ij}$ is a group homomorphism between abelian groups  for any $i\rightarrow j$ in $\Gamma$.

 It is easy to see that the only difference of $\textbf{Ab}\mathrm{-}\textbf{Rep}\Gamma$ with the category $\textbf{Lin}\mathrm{-}\textbf{Rep}\Gamma$ of linear representations of $\Gamma$ is that each vertex $i$ of $\Gamma$ is configured with an abelian group $G(i)$, which does not need to be a linear space. Then considering the path ring $\mathbb Z\Gamma$ over the integer ring $\mathbb Z$, we can obtain the similar result with the categorical equivalence between $\textbf{Lin-Rep}\Gamma$ and $k\Gamma$-Mod over a field $k$ as follows, using the similar mutual invertible functors.
  \begin{Proposition}
The categories $\textbf{Ab-Rep}\Gamma$ and $\mathbb Z\Gamma$-Mod are equivalent.
\end{Proposition}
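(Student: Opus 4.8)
The plan is to mimic exactly the proof of Theorem \ref{The2.4} (the equivalence $\textbf{Top}\mathrm{-}\textbf{Rep}\Gamma\simeq P(\Gamma)\text{-}\mathcal{TOP}^o$), but now in the purely algebraic setting where the role of the multiplicative semigroup $P(\Gamma)$ is replaced by the path ring $\mathbb{Z}\Gamma$ and the role of $S$-topological systems is replaced by left $\mathbb{Z}\Gamma$-modules. First I would construct a functor $F:\textbf{Ab-Rep}\Gamma\to\mathbb{Z}\Gamma\text{-Mod}$: given $(G,f)$ with abelian groups $G(i)$ and homomorphisms $f_{ij}:G(i)\to G(j)$ for each arrow $i\to j$, set $M=\bigoplus_{i\in\Gamma_0}G(i)$, and define the action of a path $\rho=\alpha_s\cdots\alpha_1$ (with $\alpha_1$ starting at $i$) on the component $G(i)$ by $\rho\cdot m=f_{\alpha_s}\cdots f_{\alpha_1}(m)$, on components $G(k)$ with $k\neq s(\rho)$ by $0$, and extend $\mathbb{Z}$-linearly to all of $\mathbb{Z}\Gamma$; the idempotents $e_i$ act as the projections onto $G(i)$. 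One checks this is a well-defined $\mathbb{Z}\Gamma$-module (here, unlike the semigroup case, there is no need for a separate zero element $\theta_M$, since $M$ already has the additive identity $0$). On morphisms, a morphism $h=\{h_i\}$ making Figure (I) commute induces $\tilde h=\bigoplus_i h_i:M\to M'$, and the commutativity $h_jf_{\alpha}=f'_{\alpha}h_i$ gives, exactly as in the displayed computation in the proof of Theorem \ref{The2.4}, that $\tilde h(\rho m)=\rho\tilde h(m)$ for every path $\rho$, hence (by additivity) for every element of $\mathbb{Z}\Gamma$; so $F(h)=\tilde h$ is $\mathbb{Z}\Gamma$-linear.

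Next I would construct the quasi-inverse $H:\mathbb{Z}\Gamma\text{-Mod}\to\textbf{Ab-Rep}\Gamma$. For a left $\mathbb{Z}\Gamma$-module $M$, set $G(i)=e_iM$ (an abelian group, since $e_i$ is idempotent and $\mathbb{Z}$-linearity is preserved), and for an arrow $\alpha:i\to j$ define $f_\alpha:G(i)\to G(j)$ by $f_\alpha(m)=\alpha m=e_j(\alpha m)\in e_jM=G(j)$, using $\alpha=e_j\alpha e_i$ in $\mathbb{Z}\Gamma$; this is a group homomorphism. For a $\mathbb{Z}\Gamma$-linear map $\varphi:M\to M'$, restrict to $\varphi_i=\varphi|_{e_iM}:e_iM\to e_iM'$ (well-defined since $\varphi(e_iM)=e_i\varphi(M)\subseteq e_iM'$), and the arrow relations $\alpha\varphi(m)=\varphi(\alpha m)$ give $f'_\alpha\varphi_i=\varphi_jf_\alpha$, so $H(\varphi)=\{\varphi_i\}$ is a morphism in $\textbf{Ab-Rep}\Gamma$. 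Then I would verify $HF\cong\mathrm{id}$ and $FH\cong\mathrm{id}$: the composite $HF(G,f)$ returns $(e_iM)_{i}=(G(i))_i$ with the same induced arrow maps (essentially on the nose), while $FH(M)=\bigoplus_{i\in\Gamma_0}e_iM$; the natural isomorphism $\bigoplus_i e_iM\xrightarrow{\sim} M$ is exactly the identity $\sum_i e_i=1$ in $\mathbb{Z}\Gamma$, which holds because $\Gamma$ is finite — this is the analogue of the computation $M=\bigcup_i T(i)\cup\theta_M$ in the proof of Theorem \ref{The2.4}, except that here completeness of $\{e_i\}$ is an identity in the ring rather than a property to be extracted from $P(\Gamma)M=M$.

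The only genuine point requiring care — the ``main obstacle,'' though it is minor — is the bookkeeping with the orthogonal idempotents: one must use that $\{e_i\}_{i\in\Gamma_0}$ is a complete set of orthogonal idempotents of $\mathbb{Z}\Gamma$ (complete precisely because $\Gamma_0$ is finite), so that $M=\bigoplus_{i\in\Gamma_0}e_iM$ as abelian groups and every element of $\mathbb{Z}\Gamma$ is an integer combination of paths $e_j\alpha_s\cdots\alpha_1 e_i$; this lets one reduce every $\mathbb{Z}\Gamma$-linearity check to a check on individual paths acting on individual components $e_iM$, which is where the computation in the proof of Theorem \ref{The2.4} applies verbatim. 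All remaining verifications (functoriality of $F$ and $H$, compatibility of compositions, naturality of the isomorphisms $HF\cong\mathrm{id}$ and $FH\cong\mathrm{id}$) are routine and identical in spirit to the semigroup case, so I would simply state that they ``follow by the same argument as in the proof of Theorem \ref{The2.4}, working additively over $\mathbb{Z}\Gamma$ instead of over the semigroup $P(\Gamma)$,'' and conclude that $F$ and $H$ are mutually quasi-inverse equivalences, whence $\textbf{Ab-Rep}\Gamma\simeq\mathbb{Z}\Gamma\text{-Mod}$.
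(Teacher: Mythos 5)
Your proposal is correct and is essentially the paper's own (only sketched) argument: the paper justifies the proposition by appealing to the classical equivalence $\textbf{Lin-Rep}\Gamma\simeq k\Gamma$-Mod ``using the similar mutual invertible functors,'' and your explicit functors $(G,f)\mapsto\bigoplus_{i\in\Gamma_0}G(i)$ with the path action and $M\mapsto(e_iM)_i$ are exactly those, with the role of the finiteness of $\Gamma_0$ (so that $\sum_{i\in\Gamma_0}e_i=1$ in $\mathbb{Z}\Gamma$) correctly identified. The only cosmetic difference is that you present it as an additive adaptation of the proof of Theorem \ref{The2.4} (direct sum replacing disjoint union with basepoint $\theta_M$), which is the same construction in parallel form.
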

Note that the path ring $\mathbb Z\Gamma$ is indeed the semigroup ring $\mathbb ZP(\Gamma)$ of the semigroup $P(\Gamma)$.

According to \cite{HL},
for a complex chain of abelian groups:
$$\mathop{G}\limits^{\bullet}:\; \cdots\xrightarrow{f_{n+1}} G_n\xrightarrow{f_n} G_{n-1}\xrightarrow{f_{n-1}} \cdots,$$  $\mathop{lim}\limits_{\leftarrow}G_n=\{(g_n)_{n\in\mathds{Z}}\in\prod_{n\in\mathds{Z}}G_n|f_n(g_n)=g_{n-1},\forall n\in\mathds{Z}\}$ is a a subgroup of $\prod_{n\in\mathds{Z}}G_n$.

We can think the complex chain $\mathop{G}\limits^{\bullet}$ to be on the infinite linear quiver $A_{\infty}$. In follows, we think an (infinite) quiver to be a generalization of (infinite) linear quiver. So, as a generalization of                                       $\mathop{lim}\limits_{\leftarrow}$, we introduce the notation $lim^{\Gamma}$.
\begin{Definition}\label{sequence}
Let $\Gamma$ be a quiver, and $(G,f)$ be a $\Gamma$-representation via abelian groups. Define
$$lim^{\Gamma}(G,f)=\{(x_i)_{i\in\Gamma}: \;x_i\in G(i)\;\text{and}\;f_{ij}(x_i)=x_j\;\text{if}\;f_{ij}\;\text{exists} \}\subseteq \prod_{i\in \Gamma}G(i).$$
We call $lim^{\Gamma}(G,f)$ the {\bf $\Gamma$-limit of $(G,f)$}.
\end{Definition}

 Note that $lim^{\Gamma}(G,f)$ is a subgroup of $\prod_{i\in \Gamma}G(i)$, since for any $(x_i)_{i\in\Gamma},(y_i)_{i\in\Gamma}\in lim^{\Gamma}(G,f)$, we have $(x_i)_{i\in\Gamma}-(y_i)_{i\in\Gamma}=
(x_i-y_i)_{i\in\Gamma}$ and $f_{ij}(x_i-y_i)=f_{ij}(x_{i})-f_{ij}(y_{i})=x_{j}-y_{j}$ for all arrows $i\rightarrow j\in\Gamma_{1}$, therefore $(x_i)_{i\in\Gamma}-(y_i)_{i\in\Gamma}\in lim^{\Gamma}(G,f)$.

\begin{Definition}
 Let $\Gamma$ be a quiver, and $(\mathop{C}\limits^{\bullet},f)$ be a $\Gamma$-representation via
 chain complexes of abelian groups. Define $\mathop{lim^{\Gamma}}\limits^{\bullet}(\mathop{C}\limits^{\bullet},f)$ as a subcomplex of $\prod_{i\in\Gamma}\mathop{C}\limits^{\bullet}(i)$, such that
$(\mathop{lim^{\Gamma}}\limits^{\bullet}(\mathop{C}\limits^{\bullet},f))_n=lim^{\Gamma}(C^n,f^n)$,  where each
 $(C^n,f^n)$ is a $\Gamma$-representation via abelian groups and  $C^n(i)=\mathop{C}\limits^{\bullet}(i)_n$, $(f^n)_{ij}=(f_{ij})_n$.
\end{Definition}

In the case that $\Gamma$ is an acyclic quiver.
Then $\Gamma_{0}$ is a partially ordered set with the partial order $\geq$ defined as follows:
 for any two vertices $i,j\in\Gamma_{0}$, we define $i\geq j$ if there is a path from $i$ to $j$. In particular,  for any vertex $i\in\Gamma_{0}$, we say that
there is a path of length zero from $i$ to $i$, i.e.  $i\geq i$.

For $(G,f)\in\textbf{Ab}\mathrm{-}\textbf{Rep}\Gamma$ and
 any $i\geq j\in\Gamma_{0}$, if $i=j$ we set $\theta_{ij}=Id_{G(i)}$;
otherwise, there is at least a path $i\rightarrow i_{0}\rightarrow i_{1}\rightarrow \dots\rightarrow i_{n}\rightarrow j$
and then we set $\theta_{ij}=f_{i_{n},j}f_{i_{n-1},i_{n}}\dots f_{i,i_{0}}$.

Note that for  any $i\geq j\in\Gamma_{0}$, if there are more than one path from $i$ to $j$,
then for any two paths
$$i\rightarrow i_{0}\rightarrow i_{1}\rightarrow\dots\rightarrow i_{n}\rightarrow j\;\;\text{ and }\;\; i\rightarrow j_{0}\rightarrow j_{1}\rightarrow\dots\rightarrow j_{m}\rightarrow j,
$$
it needs to assume that $f_{j_{m},j}f_{j_{m-1},j_{m}}\dots f_{j_0,j_1}f_{i,j_0}=f_{i_{n},j}f_{i_{n-1},i_{n}}\dots f_{i_0,i_1}f_{i,i_0}$ so as to both be defined as  $\theta_{ij}$.

Thus, in the meaning of \cite{RJ},
 $(G(i),\theta_{ij})_{i\in\Gamma_{0},i\geq j}$ is an inverse system  in the category \textbf{Ab} over the partially ordered set $\Gamma_{0}$  and $lim^{\Gamma}(G,f)=\mathop{lim}\limits_{\leftarrow}(G(i),\theta_{ij})$.

Similarly, for any $(\mathop{C}\limits^{\bullet},f)\in \textbf{C}\mathrm{-}\textbf{Rep}\Gamma$, $(\mathop{C}\limits^{\bullet}(i),\theta_{ij})$
is an inverse system in the category \textbf{C} over the partially ordered set $\Gamma_{0}$  and $\mathop{lim^{\Gamma}}\limits^{\bullet}(\mathop{C}\limits^{\bullet},f))=\mathop{lim}\limits_{\leftarrow}(\mathop{C}\limits^{\bullet}(i),\theta_{ij})$.

Note that we cannot define the partial order as above for a quiver $\Gamma$ containing an oriented cycle: otherwise, we have $i\not=j$ but  $i\geq j,j\geq i$, which is a contradiction.

 Conversely, let $(M_{i},\theta_{ij})_{i,j\in I,i\geq j}$ be an inverse system in the category \textbf{Ab} over the partially ordered set $I$. Then we can define the quiver $\Gamma_{I}=(\Gamma_{0},\Gamma_{1},s,t)$, where $\Gamma_{0}=I,\Gamma_{1}=\{i\rightarrow j|i,j\in I\;\text{and}\;i\geq j\}$. Let $(G,f)\in\textbf{Ab}\mathrm{-}\textbf{Rep}\Gamma_I$, and $G(i)=M_{i},\forall i\in\Gamma_{0}$
$f_{ij}=\theta_{ij}, \forall i\rightarrow j$ in $\Gamma_1$, then we have $lim^{\Gamma_I}(G,f)=\mathop{lim}\limits_{\leftarrow}(M_{i},\theta_{ij})$ in the category $\textbf{Ab}\mathrm{-}\textbf{Rep}\Gamma_I$.

The converse discussion holds for an inverse system $(M_{i},\theta_{ij})_{i,j\in I,i\geq j}$ from $\bf{C}$ to $\textbf{C}\mathrm{-}\textbf{Rep}\Gamma_I$.


\begin{Definition}
Let $\Gamma$ be a quiver. Define a functor $S^{\Gamma}:\textbf{Top}\mathrm{-}\textbf{Rep}\Gamma\rightarrow\textbf{C}\mathrm{-}\textbf{Rep}\Gamma$. For all $(T,f)\in\textbf{Top}\mathrm{-}\textbf{Rep}\Gamma$, $S^{\Gamma}(T,f)=(\mathop{A}\limits^{\bullet},h)$ where $\mathop{A}\limits^{\bullet}(i)=S(T(i))$ for all $i\in\Gamma$
and $h_{ij}=S(f_{ij})$ for all $i\rightarrow j$ in $\Gamma$. For any $\alpha:(T,f)\rightarrow(T^{'},f^{'})$, $S^{\Gamma}(\alpha):
S^{\Gamma}(T,f)\rightarrow S^{\Gamma}(T^{'},f^{'})$ with $(S^{\Gamma}(\alpha))_{i}=S(\alpha_i)$.

Similarly, for each $n\geq-1$, we define a functor $S^{\Gamma}_n:\textbf{Top}\mathrm{-}\textbf{Rep}\Gamma\rightarrow \textbf{Ab}\mathrm{-}\textbf{Rep}\Gamma$. For all $(T,f)\in\textbf{Top}\mathrm{-}\textbf{Rep}\Gamma$,$S^{\Gamma}_{n}(T,f)=(G,g)$
where $G(i)=S_n(T(i))$ for all $i\in\Gamma$ and $g_{ij}=S_n(f_{ij})$ for all $i\rightarrow j$ in $\Gamma$. For any $\alpha:(T,f)\rightarrow(T^{'},f^{'})$, $S^{\Gamma}_n(\alpha):
S^{\Gamma}_n(T,f)\rightarrow S^{\Gamma}_n(T^{'},f^{'})$ with $(S^{\Gamma}_n(\alpha))_{i}=S_n(\alpha_i)$.
\end{Definition}

\begin{Example}
Let $\Gamma$ be a quiver: $\cdot\longrightarrow\cdot\longleftarrow\cdot$.
For any $(T,f): X_1\longrightarrow X_2\longleftarrow X_3$ with maps $f_{12}, f_{32} \in$ $\textbf{Top}\mathrm{-}\textbf{Rep}\Gamma$,  $$S^{\Gamma}(T,f)= S(X_1)\longrightarrow S(X_2)\longleftarrow S(X_3)$$ with maps $S(f_{12}), S(f_{32})$ and
$S^{\Gamma}_n(T,f)= S_n(X_1)\longrightarrow S_n(X_2)\longleftarrow S_n(X_3)$ with maps $S_n(f_{12}), S_n(f_{32})$.
\end{Example}

\begin{Definition}
Let $\Gamma$ be a quiver. For any $(T,f)\in\textbf{Top}\mathrm{-}\textbf{Rep}\Gamma$, define $\mathop{lim^{\Gamma}}\limits^{\bullet}(T,f)=\mathop{lim^{\Gamma}}\limits^{\bullet}
(S^{\Gamma}(T,f))$.
\end{Definition}

From the definition, we know that $(\mathop{lim^{\Gamma}}\limits^{\bullet}(T,f))_n=lim^{\Gamma}(S_n^{\Gamma}(T,f))$ for all $n\geq-1$.

\begin{Corollary}
Let $\Gamma$ be a quiver. Then $\mathop{lim^{\Gamma}}\limits^{\bullet}:\textbf{Top}\mathrm{-}\textbf{Rep}\Gamma\rightarrow \textbf{C}$ is a functor.
\end{Corollary}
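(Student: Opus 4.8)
The plan is to verify the two functor axioms directly. Everything rests on facts already recorded: $S\colon\textbf{Top}\to\textbf{C}$ is a functor (so each $S(g)$ is a chain map, $S(g_2g_1)=S(g_2)S(g_1)$, $S(\mathrm{id})=\mathrm{id}$), and $S^{\Gamma}$, $S^{\Gamma}_n$ are built componentwise from $S$; recall also that by construction the graded group $\mathop{lim^{\Gamma}}\limits^{\bullet}(T,f)$ has $n$-th term $lim^{\Gamma}(S^{\Gamma}_n(T,f))$, a subgroup of $\prod_{i\in\Gamma_0}S_n(T(i))$.

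First, well-definedness on objects. Fix $(T,f)\in\textbf{Top}\mathrm{-}\textbf{Rep}\Gamma$ and write $S^{\Gamma}(T,f)=(\mathop{A}\limits^{\bullet},h)$ with $\mathop{A}\limits^{\bullet}(i)=S(T(i))$, $h_{ij}=S(f_{ij})$. I must check that the boundary of the product complex $\prod_{i\in\Gamma_0}S(T(i))$ restricts to $\mathop{lim^{\Gamma}}\limits^{\bullet}(T,f)$. Given a tuple $(x_i)_i$ with $S_n(f_{ij})(x_i)=x_j$ for every arrow $i\to j$, the chain-map relation $S_{n-1}(f_{ij})\partial_n=\partial_n S_n(f_{ij})$ yields $S_{n-1}(f_{ij})(\partial_n x_i)=\partial_n x_j$, so $(\partial_n x_i)_i\in lim^{\Gamma}(S^{\Gamma}_{n-1}(T,f))$. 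Hence $\mathop{lim^{\Gamma}}\limits^{\bullet}(T,f)$ is genuinely a subcomplex of $\prod_{i\in\Gamma_0}S(T(i))$ (with $\partial\partial=0$ inherited), i.e. an object of $\textbf{C}$.

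Second, the morphism assignment. Let $\alpha\colon(T,f)\to(T',f')$ be a morphism. If $\alpha=(\alpha_i)_{i\in\Gamma_0}$ with the squares of Figure (I) commuting, i.e. $f'_{ij}\alpha_i=\alpha_j f_{ij}$, then the product chain map $\prod_i S(\alpha_i)\colon\prod_i S(T(i))\to\prod_i S(T'(i))$ restricts to a chain map $\mathop{lim^{\Gamma}}\limits^{\bullet}(T,f)\to\mathop{lim^{\Gamma}}\limits^{\bullet}(T',f')$: for $(x_i)_i\in lim^{\Gamma}(S^{\Gamma}_n(T,f))$ and an arrow $i\to j$,
\[
S_n(f'_{ij})\bigl(S_n(\alpha_i)(x_i)\bigr)=S_n(f'_{ij}\alpha_i)(x_i)=S_n(\alpha_j f_{ij})(x_i)=S_n(\alpha_j)(x_j),
\]
so $\bigl(S_n(\alpha_i)(x_i)\bigr)_i$ lies in the target $\Gamma$-limit; set $\mathop{lim^{\Gamma}}\limits^{\bullet}(\alpha)$ equal to this restriction. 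In the only other case $\alpha$ is the formal zero morphism (arising when $T'(i_0)=\emptyset$ but $T(i_0)\not=\emptyset$ for some $i_0$); then $S^{\Gamma}(\alpha)$ is componentwise zero, restricting to the zero chain map (the tuple $(0)_i$ belongs to every $\Gamma$-limit), which we take as $\mathop{lim^{\Gamma}}\limits^{\bullet}(\alpha)$.

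Finally, the functor laws. $\mathop{lim^{\Gamma}}\limits^{\bullet}(\mathrm{id})$ restricts $\prod_i S(\mathrm{id})=\mathrm{id}$, hence is the identity. For composable non-zero $\alpha,\beta$, $(\beta\alpha)_i=\beta_i\alpha_i$ gives $\prod_i S(\beta_i\alpha_i)=\bigl(\prod_i S(\beta_i)\bigr)\bigl(\prod_i S(\alpha_i)\bigr)$ by functoriality of $S$, and restriction of a composite is the composite of restrictions, so $\mathop{lim^{\Gamma}}\limits^{\bullet}(\beta\alpha)=\mathop{lim^{\Gamma}}\limits^{\bullet}(\beta)\,\mathop{lim^{\Gamma}}\limits^{\bullet}(\alpha)$; if either factor is a formal zero morphism then so is $\beta\alpha$, and both sides are zero. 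I anticipate no serious difficulty: the whole argument is the two elementary ``restriction'' checks above, each immediate from $S$ commuting with boundaries and from the commuting squares of Figure (I); the only step needing a little care is the bookkeeping around empty vertex-spaces and formal zero morphisms in $\textbf{Top}\mathrm{-}\textbf{Rep}\Gamma$, which collapses at once since $S(\emptyset)=0$.
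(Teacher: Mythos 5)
Your proof is correct, and it is essentially the argument the paper intends: the corollary is stated there without proof, the implicit justification being that $\mathop{lim^{\Gamma}}\limits^{\bullet}$ on $\textbf{Top}\mathrm{-}\textbf{Rep}\Gamma$ is by definition the composite of the functor $S^{\Gamma}$ with $\mathop{lim^{\Gamma}}\limits^{\bullet}\colon\textbf{C}\mathrm{-}\textbf{Rep}\Gamma\rightarrow\textbf{C}$, and your two restriction checks (the differential of $\prod_{i}S(T(i))$ preserves the limit condition since each $S(f_{ij})$ is a chain map, and a morphism's product chain map restricts because of the commuting squares $f'_{ij}\alpha_i=\alpha_jf_{ij}$) together with the identity/composition laws are exactly the details behind that. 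Your added bookkeeping for empty vertex spaces and formal zero morphisms is consistent with the paper's convention $S(\emptyset)=0$ and is, if anything, more careful than the paper itself.
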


\begin{Example}
Let $\Gamma$ be a finite quiver $\cdot\rightarrow\cdot\rightarrow\cdots\rightarrow\cdot$. For any $(T,f)\in\textbf{Top}\mathrm{-}\textbf{Rep}\Gamma$,
assume $(T,f)=X_1\rightarrow X_2\rightarrow\dots\rightarrow X_n$. Obviously, $\mathop{lim^{\Gamma}}\limits^{\bullet}(T,f)\simeq S(X_1)$.
\end{Example}
\begin{Lemma}\label{lem4.9}
Let $\Gamma$ be a connected quiver, and $(T,f)\in\textbf{Top}\mathrm{-}\textbf{Rep}\Gamma$ with $f_{ij}$ being isomorphisms for
all $i\rightarrow j$ in $\Gamma$ and $f_{\alpha}=f_{\beta}$ if both $\alpha$ and $\beta$ are arrows from $i$ to $j$. Then $\mathop{lim^{\Gamma}}\limits^{\bullet}(T,f)$ is isomorphic to a subcomplex of $S(T(i))$ where $i\in\Gamma$. Furthermore, $\mathop{lim^{\Gamma}}\limits^{\bullet}(T,f)\simeq S(T(i))$ for $i\in\Gamma$ if $\Gamma$ is acyclic.
\end{Lemma}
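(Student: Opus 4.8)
The plan is to study, for a fixed vertex $i\in\Gamma_{0}$, the coordinate projection
$$p_{i}\colon\ \mathop{lim^{\Gamma}}\limits^{\bullet}(T,f)\ \longrightarrow\ S(T(i)),\qquad (c_{v})_{v\in\Gamma_{0}}\ \longmapsto\ c_{i},$$
and to show that it is always a monomorphism of chain complexes, and moreover an isomorphism when $\Gamma$ is acyclic. That $p_{i}$ is a chain map is immediate: in degree $n$ it is the restriction to $(\mathop{lim^{\Gamma}}\limits^{\bullet}(T,f))_{n}=lim^{\Gamma}(S_{n}^{\Gamma}(T,f))$ of the coordinate projection $\prod_{v\in\Gamma_{0}}S_{n}(T(v))\to S_{n}(T(i))$, and the differential of $\mathop{lim^{\Gamma}}\limits^{\bullet}(T,f)$ is, by construction, the restriction of the coordinatewise product differential, with which every coordinate projection commutes. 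Once $p_{i}$ is known to be a monomorphism, $\mathop{lim^{\Gamma}}\limits^{\bullet}(T,f)\cong\operatorname{im}p_{i}$, a subcomplex of $S(T(i))$, which is the first assertion.

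To see $p_{i}$ is injective, take $(c_{v})_{v}\in(\mathop{lim^{\Gamma}}\limits^{\bullet}(T,f))_{n}$ with $c_{i}=0$ and fix any vertex $j$. Since $\Gamma$ is connected, there is a walk $i=v_{0},v_{1},\dots,v_{k}=j$ in the underlying graph, each consecutive pair joined by an arrow in one of the two directions. If $v_{\ell}\to v_{\ell+1}$ is an arrow then the defining relation of $lim^{\Gamma}$ reads $S_{n}(f_{v_{\ell}v_{\ell+1}})(c_{v_{\ell}})=c_{v_{\ell+1}}$; if $v_{\ell+1}\to v_{\ell}$ is an arrow it reads $S_{n}(f_{v_{\ell+1}v_{\ell}})(c_{v_{\ell+1}})=c_{v_{\ell}}$. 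As each $f_{\alpha}$ is a homeomorphism and $S_{n}$ is a functor, $S_{n}(f_{\alpha})$ is a group isomorphism, so in either case $c_{v_{\ell+1}}$ is completely determined by $c_{v_{\ell}}$; hence by induction $c_{j}$ is determined by $c_{i}=0$, so $c_{j}=0$. Thus $(c_{v})_{v}=0$ and $p_{i}$ is a monomorphism.

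For the acyclic case I would show $p_{i}$ is also onto, hence an isomorphism. Given $c\in S_{n}(T(i))$, transport $c$ along a walk from $i$ to each vertex $v$ — applying $S_{n}(f_{\alpha})$ across a forward arrow and $S_{n}(f_{\alpha})^{-1}$ across a backward one — and call the result $c_{v}$. If this $c_{v}$ does not depend on the chosen walk, then compatibility of $(c_{v})_{v}$ with every arrow of $\Gamma$ follows by prepending that arrow to a walk, so $(c_{v})_{v}\in\mathop{lim^{\Gamma}}\limits^{\bullet}(T,f)$ and $p_{i}((c_{v})_{v})=c$. Well-definedness here is equivalent to the transport operator of every closed walk based at $i$ being the identity of $S_{n}(T(i))$, and this is the step I expect to be the real obstacle: the monomorphism half is formal once one has $p_{i}$, but surjectivity requires controlling \emph{all} closed walks of $\Gamma$ rather than just a generating set. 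This is precisely where the hypotheses must be used decisively — the equality $f_{\alpha}=f_{\beta}$ for parallel arrows makes the transport across a double edge cancel, and one has to exploit the partial order that acyclicity confers on $\Gamma_{0}$ (so that closed walks break into pieces on which composites of the form $S_{n}(f_{\alpha})^{-1}S_{n}(f_{\alpha})$ telescope to the identity) rather than bare connectivity; weakening the acyclicity assumption is exactly what would break path-independence.
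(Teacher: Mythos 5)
Your first half---the coordinate projection $p_{i}$, the observation that it is a chain map, and the induction along an undirected walk using connectedness together with the fact that each $S_{n}(f_{\alpha})$ is an isomorphism---is correct and is essentially the paper's own argument (the paper's map $L_{j}$ is your $p_{i}$), so the assertion that $\mathop{lim^{\Gamma}}\limits^{\bullet}(T,f)$ is isomorphic to a subcomplex of $S(T(i))$ is fully covered.

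The ``furthermore'' half, however, is left unproven in your proposal, and your instinct about where the obstacle lies is justified---but the obstacle cannot be removed under the stated hypotheses if ``acyclic'' means ``no oriented cycles,'' which is how the paper uses the word elsewhere (it is exactly the condition under which $\Gamma_{0}$ becomes a poset in Section 4). Path-independence of your transport genuinely fails: take $\Gamma$ with vertices $1,2,3$ and arrows $1\to 2$, $2\to 3$, $1\to 3$ (connected, no oriented cycle, no parallel arrows, so the hypothesis $f_{\alpha}=f_{\beta}$ is vacuous), let every $T(v)$ be the two-point discrete space, $f_{12}=f_{23}=\mathrm{id}$ and $f_{13}$ the swap homeomorphism. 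The limit is then identified by $p_{1}$ with the equalizer $\{x\in S(T(1))\mid S(f_{13})(x)=x\}$, whose degree-zero part is $\mathds{Z}$ while $S_{0}(T(1))\cong\mathds{Z}^{2}$; so $p_{1}$ is not surjective and $\mathop{lim^{\Gamma}}\limits^{\bullet}(T,f)\not\simeq S(T(1))$. The parallel-arrow condition gives no commutativity between composites along different undirected walks, and the partial order coming from oriented acyclicity does not make the closed-walk transports telescope, contrary to what you hoped. The paper's proof does not supply this step either: it asserts that ``if $\Gamma$ contains no circle, obviously $L_{j}$ is also surjective,'' which is correct only under the stronger reading that the base (undirected) graph of $\Gamma$ has no cycles, i.e.\ $\Gamma$ is a tree---then there is a unique reduced walk from $i$ to each vertex, your transport is trivially well defined, and surjectivity follows at once. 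So to complete your argument you must either assume the underlying graph is a tree or add a commutativity hypothesis (the transports along any two undirected walks between the same pair of vertices coincide); oriented acyclicity alone does not suffice.
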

\begin{proof}
Since $\Gamma$ is connected and $f_{ij}$ are isomorphisms for all $i\rightarrow j$ in $\Gamma$, we have $T(i)\simeq T(j)$ and $S(T(i))\simeq S(T(j))$ for all $i,j\in\Gamma$. For each $j\in\Gamma$, define $L_{j}:\mathop{lim^{\Gamma}}\limits^{\bullet}(T,f)\rightarrow S(T(i))$ such that for all $n\geq0$ $(L_{j})_n:lim^{\Gamma}(S^{\Gamma}_n(T,f))\rightarrow S_n(T(i)),(\alpha_{i})_{i\in\Gamma}\mapsto
\alpha_j$. We will show $(L_{j})_n$ are injective for all $j\in\Gamma$ and $n\geq0$. If $\alpha_i=\beta_i$ for any $(\alpha_i)_{i\in\Gamma},(\beta_i)_{i\in\Gamma}\in lim^{\Gamma}(S^{\Gamma}_n(T,f))$. Since $\Gamma$ is connected and $f_{ij}$ are isomorphisms
for all $i\rightarrow j$ in $\Gamma$, there exists an isomorphism $g_t:S_n(T(i))\rightarrow S_n(T(t))$ for each $t\in\Gamma$
such that $\alpha_t=g_t(\alpha_i)=g_t(\beta_i)=\beta_t$. Therefore, $(\alpha_i)_{i\in\Gamma}=(\beta_{i})_{i\in\Gamma}$
and $(L_{j})_n$ is injective. This implies $L_{j}:\mathop{lim^{\Gamma}}\limits^{\bullet}(T,f)\rightarrow S(T(j))$ is injective for each $j\in\Gamma$.
There exist isomorphisms $g_{j_1,j_2}$ from $S(T(j_1))$ to $S(T(j_2))$ because $\Gamma$ is connected and $f_{ij}$ are isomorphisms for all $i\rightarrow j$ in $\Gamma$. We have the following diagram:
$$\xymatrix{
&\mathop{lim^{\Gamma}}\limits^{\bullet}(T,f)\ar[r]^{L_{j_1}}\ar[d]^{L_{j_2}}&S(T(j_1))\ar[dl]^{g_{j_1,j_2}}\\
&S(T(j_2))
}$$
If $\Gamma$ contains no circle, obviously $L_{j}$ is also surjective, that is $L_{j}$ is isomorphic, for each $j\in\Gamma$. Then,
$\mathop{lim^{\Gamma}}\limits^{\bullet}(T,f)\simeq S(T(j))$ for all $j\in\Gamma$.

\end{proof}

\begin{Definition}\label{Def4.9}
Let $\Gamma$ be a quiver, and $(T,f)\in\textbf{Top}\mathrm{-}\textbf{Rep}\Gamma$. Define
$H_n(T,f)=H_n(\mathop{lim^{\Gamma}}\limits^{\bullet}(T,f))$ for all $n\geq0$.
\end{Definition}

\begin{Example}\label{Ex4.11}
Let $\Gamma$ be a connected quiver, $(T,f)\in\textbf{Top}\mathrm{-}\textbf{Rep}\Gamma$ with $T(i)=\{x_i\}$ for all $i\in\Gamma$.
Obviously, $\mathop{lim^{\Gamma}}\limits^{\bullet}(T,f)\simeq S(T(i))$. Thus, $H_n(T,f)\simeq H_n(T(i))$, and then $H_n(T,f)=0$ $ \forall n\geq1,H_0(T,f)=\mathds{Z}$.
\end{Example}

As in Definition \ref{Def4.9}, $H_n(T,f)=H_n(\mathop{lim^{\Gamma}}\limits^{\bullet}(T,f))$ for all $n\geq0$. For any
morphism $(h_i)_{i\in\Gamma}:(T,f)\rightarrow (T^{'},f^{'})$ and $n\geq0$,
$H_n((h_i)_{i\in\Gamma})=H_n(\mathop{lim^{\Gamma}}\limits^{\bullet}(f)):H_n(T,f)\rightarrow H_n(T^{'},f^{'})$.
For each $n\geq0$, $H_n(\mathrm{-})$ is a functor from $\textbf{Top}\mathrm{-}\textbf{Rep}\Gamma$ to $\textbf{Ab}$.

Then, by Lemma \ref{lem4.9}, we can obtain the following corollaries.

\begin{Corollary}
For each $n\geq0$, $H_n(\mathrm{-})$ is a functor from $\textbf{Top}\mathrm{-}\textbf{Rep}\Gamma$ to $\textbf{Ab}$.
\end{Corollary}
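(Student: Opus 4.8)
The plan is to present $H_n(\mathrm{-})$ as the composite of two functors that are already at our disposal, $\mathop{lim^{\Gamma}}\limits^{\bullet}\colon\textbf{Top}\mathrm{-}\textbf{Rep}\Gamma\to\textbf{C}$ and the classical $n$-th homology functor $H_n\colon\textbf{C}\to\textbf{Ab}$, and to conclude by the closure of functors under composition. The first of these is exactly the content of the corollary stated just before Definition \ref{Def4.9}: $\mathop{lim^{\Gamma}}\limits^{\bullet}$ sends each $(T,f)$ to the chain complex $\mathop{lim^{\Gamma}}\limits^{\bullet}(S^{\Gamma}(T,f))$ of abelian groups and each morphism $(h_i)_{i\in\Gamma}$ to a bona fide chain map, respecting identities and composition. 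The second is the standard fact that $\mathop{A}\limits^{\bullet}\mapsto H_n(\mathop{A}\limits^{\bullet})=\ker\partial_n/\mathrm{im}\,\partial_{n+1}$, together with the map on homology classes $\mathrm{cls}\,z_n\mapsto\mathrm{cls}\,(\varphi(z_n))$ induced by a chain map $\varphi$, defines a functor $\textbf{C}\to\textbf{Ab}$. Since $H_n(T,f)$ was defined in Definition \ref{Def4.9} precisely as $H_n(\mathop{lim^{\Gamma}}\limits^{\bullet}(T,f))$, and $H_n((h_i)_{i\in\Gamma})$ as $H_n$ applied to the chain map $\mathop{lim^{\Gamma}}\limits^{\bullet}((h_i)_{i\in\Gamma})$, we have an identity of assignments $H_n(\mathrm{-})=H_n\circ\mathop{lim^{\Gamma}}\limits^{\bullet}$ on both objects and morphisms.

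Granting this, functoriality is immediate: both $\mathop{lim^{\Gamma}}\limits^{\bullet}$ and the classical $H_n$ carry identities to identities, so $H_n(1_{(T,f)})=1_{H_n(T,f)}$; and for composable $h,g$ in $\textbf{Top}\mathrm{-}\textbf{Rep}\Gamma$ one has $H_n(g\circ h)=H_n\big(\mathop{lim^{\Gamma}}\limits^{\bullet}(g\circ h)\big)=H_n\big(\mathop{lim^{\Gamma}}\limits^{\bullet}(g)\circ\mathop{lim^{\Gamma}}\limits^{\bullet}(h)\big)=H_n\big(\mathop{lim^{\Gamma}}\limits^{\bullet}(g)\big)\circ H_n\big(\mathop{lim^{\Gamma}}\limits^{\bullet}(h)\big)=H_n(g)\circ H_n(h)$. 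If one prefers not to quote ``composition of functors is a functor'' as a black box, this displayed chain of equalities, which uses only that chain maps compose to chain maps and that homology of a chain map is well-defined on homology classes, is exactly the verification of the two functor axioms.

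The one point deserving a line of care is the convention in $\textbf{Top}\mathrm{-}\textbf{Rep}\Gamma$ about null objects and the zero morphism (the discussion following Definition \ref{Def2.1} and condition $\gamma$). Here I would note that if $(T,f)$ is null, say $T(i_0)=\emptyset$, then $S(T(i_0))=0$ by the convention $S(\emptyset)=0$ fixed earlier in this section, so $\mathop{lim^{\Gamma}}\limits^{\bullet}(T,f)$ is the zero complex and $H_n(T,f)=0$; likewise $\mathop{lim^{\Gamma}}\limits^{\bullet}$ sends the zero morphism to the zero chain map, and this is compatible with composition. With that bookkeeping dispatched, there is no real obstacle: the statement is a formal consequence of the already-established functoriality of $\mathop{lim^{\Gamma}}\limits^{\bullet}$ and the classical functoriality of $H_n$ on chain complexes.
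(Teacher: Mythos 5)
Your proposal is correct and matches the paper's (implicit) justification: the paper defines $H_n(\mathrm{-})$ on objects and morphisms exactly as $H_n\circ\mathop{lim^{\Gamma}}\limits^{\bullet}$ and treats the corollary as immediate from the functoriality of $\mathop{lim^{\Gamma}}\limits^{\bullet}:\textbf{Top}\mathrm{-}\textbf{Rep}\Gamma\rightarrow\textbf{C}$ together with the classical functor $H_n:\textbf{C}\rightarrow\textbf{Ab}$, which is precisely your argument. Your extra bookkeeping about null objects and the convention $S(\emptyset)=0$ is a harmless refinement of the same approach.
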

\begin{Corollary}\label{Cor4.13}
Let $\Gamma$ be a connected and acyclic quiver, and $(T,f)\in\textbf{Top}\mathrm{-}\textbf{Rep}\Gamma$ with
all $f_{ij}$ being isomorphisms and $f_{\alpha}=f_{\beta}$ if both $\alpha$ and $\beta$ are arrows from $i$ to $j$. Then $H_n(T,f)\simeq H_n(T(i)),\forall i\in\Gamma$ and $n\geq0$.
\end{Corollary}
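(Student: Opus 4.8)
The plan is to reduce the statement entirely to Lemma \ref{lem4.9} together with the two definitions of $H_n$ in play. First I would note that the hypotheses here are precisely those of Lemma \ref{lem4.9}, plus the extra assumption that $\Gamma$ is acyclic: $\Gamma$ is connected, every $f_{ij}$ is an isomorphism, and $f_\alpha = f_\beta$ whenever $\alpha$ and $\beta$ are parallel arrows $i\rightarrow j$. Thus the ``Furthermore'' clause of Lemma \ref{lem4.9} applies and produces, for each vertex $i\in\Gamma$, a chain isomorphism $L_i:\mathop{lim^{\Gamma}}\limits^{\bullet}(T,f)\xrightarrow{\ \sim\ }S(T(i))$. (This is the only place acyclicity is used: without it Lemma \ref{lem4.9} gives merely an isomorphism onto a subcomplex of $S(T(i))$, which would not suffice to identify the homology groups.)

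Next I would unwind the notation. By Definition \ref{Def4.9}, $H_n(T,f)=H_n\big(\mathop{lim^{\Gamma}}\limits^{\bullet}(T,f)\big)$ for all $n\geq 0$, while by the recollection from \cite{AT} at the beginning of this section, $H_n(T(i))=H_n(S(T(i)))$. Hence the assertion $H_n(T,f)\simeq H_n(T(i))$ is exactly the claim that the two chain complexes $\mathop{lim^{\Gamma}}\limits^{\bullet}(T,f)$ and $S(T(i))$ have isomorphic $n$-th homology.

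Finally I would invoke functoriality of $H_n$ on the category $\textbf{C}$ of chain complexes of abelian groups: a chain isomorphism is sent to an isomorphism on each homology group (its chain-inverse induces a two-sided inverse on homology). Applying $H_n$ to the isomorphism $L_i$ of the first step yields $H_n\big(\mathop{lim^{\Gamma}}\limits^{\bullet}(T,f)\big)\cong H_n(S(T(i)))$ for every $n\geq 0$, i.e. $H_n(T,f)\simeq H_n(T(i))$, and since $i\in\Gamma$ was arbitrary this completes the proof. There is essentially no genuine obstacle here; all the real work was carried out in Lemma \ref{lem4.9}, and the corollary is the routine passage from a chain-level isomorphism to the induced isomorphism on homology, so the only point requiring care is to record that the acyclicity hypothesis is what upgrades Lemma \ref{lem4.9} from a subcomplex statement to an honest isomorphism of complexes.
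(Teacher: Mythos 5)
Your proposal is correct and matches the paper's own (implicit) argument: the paper derives this corollary directly from the ``Furthermore'' clause of Lemma \ref{lem4.9} together with Definition \ref{Def4.9}, exactly as you do, passing from the chain isomorphism $\mathop{lim^{\Gamma}}\limits^{\bullet}(T,f)\simeq S(T(i))$ to the induced isomorphisms on homology. Your remark that acyclicity is precisely what upgrades the subcomplex embedding of Lemma \ref{lem4.9} to an isomorphism is also the right observation.
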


\begin{Definition}\label{Def4.13}
Define functors $H_n^{\Gamma}$ from $\textbf{Top}\mathrm{-}\textbf{Rep}\Gamma$ to $\textbf{Ab}\mathrm{-}\textbf{Rep}\Gamma$, $n\geq1$. For any object $(T,f)$ in
$\textbf{Top}\mathrm{-}\textbf{Rep}\Gamma$, $H_n^{\Gamma}(T,f)$=$(H_n,g_n)$ such that $H_n(i)=H_n(X_i)$ and $(g_n)_{ij}=H_n(f_{ij})$. For any
morphism $(h_i)_{i\in\Gamma}:(T,f)\rightarrow (T^{'},f^{'})$ in $\textbf{Top}\mathrm{-}\textbf{Rep}\Gamma$, $H_n^{\Gamma}((h_i)_{i\in\Gamma})=(H_n(h_i))_{i\in\Gamma}$.
\end{Definition}

Similarly, for each $n\in\mathds{Z}$ we can define a functor from $\textbf{C}\mathrm{-}\textbf{Rep}\Gamma$ to $\textbf{Ab}\mathds{-}\textbf{Rep}\Gamma$, also denoted as $H_n^{\Gamma}(\mathrm{-})$.
\begin{Example}
Let $\Gamma$ be the quiver:
$$\xymatrix{ &\cdot\ar[r]&\cdot&\cdot\ar[l]
}
$$
For any $(T,f): X_1\longrightarrow X_2\longleftarrow X_3$ with maps $f_{12}, f_{32} \in$ $\textbf{Top}\mathrm{-}\textbf{Rep}\Gamma$, $$H_n^{\Gamma}(T,f)= H_n(X_1)\longrightarrow H_n(X_2)\longleftarrow H_n(X_3)$$ with maps $H_n(f_{12}), H_n(f_{32})$.
\end{Example}
\begin{Example}
For any $(\mathop{C}\limits^{\bullet},f):\mathop{C_1}\limits^{\bullet}\longrightarrow\mathop{C_2}\limits^{\bullet}\longleftarrow\mathop{C_3}\limits^{\bullet}$ with chain morphisms $f_{12},f_{32}$ in $\textbf{C}\mathrm{-}\textbf{Rep}$, $H_n^{\Gamma}(\mathop{C}\limits^{\bullet},f):H_n(\mathop{C_1}\limits^{\bullet})\longrightarrow H_n(\mathop{C_2}\limits^{\bullet})\longleftarrow H_n(\mathop{C_3}\limits^{\bullet})$ with abelian group morphisms $H_n(f_{12}),H_n(f_{32})$.
\end{Example}

Let $\Gamma$ be a quiver. We already know $H_n\mathop{lim^{\Gamma}}\limits^{\bullet},lim^{\Gamma}H_n^{\Gamma}$ are two functors from $\textbf{C}\mathrm{-}\textbf{Rep}\Gamma$
to $\textbf{Ab}$ for each $n\geq0$.

\begin{Theorem}\label{Cor4.17} (\textbf{Commutativity of $H_n$ and $\mathop{lim^{\Gamma}}\limits^{\bullet}$)}
Let $\Gamma$ be a quiver. For each $n\geq0$,
there is a natural transformation $\rho_n$ from functor $H_n\mathop{lim^{\Gamma}}\limits^{\bullet}$ to functor $lim^{\Gamma}H_n^{\Gamma}$.

\end{Theorem}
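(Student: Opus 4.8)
The plan is to manufacture $\rho_n$ out of the canonical projections attached to the $\Gamma$-limit. Fix an object $(\mathop{C}\limits^{\bullet},f)\in\textbf{C}\mathrm{-}\textbf{Rep}\Gamma$. Since, by Definition \ref{Def4.13}'s chain-complex analogue, $\mathop{lim^{\Gamma}}\limits^{\bullet}(\mathop{C}\limits^{\bullet},f)$ is a \emph{subcomplex} of $\prod_{i\in\Gamma}\mathop{C}\limits^{\bullet}(i)$, restricting the product projection $\prod_{j}\mathop{C}\limits^{\bullet}(j)\to\mathop{C}\limits^{\bullet}(i)$ yields a chain map $\pi_i\colon \mathop{lim^{\Gamma}}\limits^{\bullet}(\mathop{C}\limits^{\bullet},f)\to\mathop{C}\limits^{\bullet}(i)$ for each vertex $i$ (it is a chain map because the differential on the product is the product of the differentials and the limit complex carries the restricted differential). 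Applying $H_n$ gives homomorphisms $H_n(\pi_i)\colon H_n\mathop{lim^{\Gamma}}\limits^{\bullet}(\mathop{C}\limits^{\bullet},f)\to H_n(\mathop{C}\limits^{\bullet}(i))$, and I would let $\rho_n^{(\mathop{C}\limits^{\bullet},f)}$ be the induced map into the product $\prod_{i\in\Gamma}H_n(\mathop{C}\limits^{\bullet}(i))$ whose $i$-th component is $H_n(\pi_i)$.

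The next step is to check that this map actually corestricts to the subgroup $lim^{\Gamma}H_n^{\Gamma}(\mathop{C}\limits^{\bullet},f)\subseteq\prod_{i}H_n(\mathop{C}\limits^{\bullet}(i))$. For any arrow $i\to j$ of $\Gamma$ and any element $(x_k)_{k\in\Gamma}$ of the limit complex one has $f_{ij}(x_i)=x_j$ by Definition \ref{sequence}, i.e. $f_{ij}\circ\pi_i=\pi_j$ as chain maps; applying the functor $H_n$ gives $H_n(f_{ij})\circ H_n(\pi_i)=H_n(\pi_j)$. Since the structure maps of $H_n^{\Gamma}(\mathop{C}\limits^{\bullet},f)$ are exactly the $H_n(f_{ij})$, this is precisely the defining relation of elements of $lim^{\Gamma}H_n^{\Gamma}(\mathop{C}\limits^{\bullet},f)$, so $\rho_n^{(\mathop{C}\limits^{\bullet},f)}$ takes values there.

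For naturality, given a morphism $\phi\colon(\mathop{C}\limits^{\bullet},f)\to(\mathop{D}\limits^{\bullet},g)$ in $\textbf{C}\mathrm{-}\textbf{Rep}\Gamma$ with components $\phi_i$, the induced chain map $\mathop{lim^{\Gamma}}\limits^{\bullet}(\phi)$ satisfies $\pi'_i\circ\mathop{lim^{\Gamma}}\limits^{\bullet}(\phi)=\phi_i\circ\pi_i$ for every $i$ (here $\pi'_i$ is the projection for $(\mathop{D}\limits^{\bullet},g)$), directly from the formula for these maps on elements of the product. Applying $H_n$ and comparing $i$-th components shows that the square with horizontal arrows $\rho_n^{(\mathop{C}\limits^{\bullet},f)},\rho_n^{(\mathop{D}\limits^{\bullet},g)}$ and vertical arrows $H_n\mathop{lim^{\Gamma}}\limits^{\bullet}(\phi),\ lim^{\Gamma}H_n^{\Gamma}(\phi)$ commutes, because two homomorphisms into a product (a fortiori into a subgroup of it) coincide once all their components coincide. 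Hence $\rho_n:=\big(\rho_n^{(\mathop{C}\limits^{\bullet},f)}\big)$ is a natural transformation $H_n\mathop{lim^{\Gamma}}\limits^{\bullet}\Rightarrow lim^{\Gamma}H_n^{\Gamma}$.

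The only genuinely delicate point is the first one — verifying that the degreewise projections glue to honest chain maps on the subcomplex and depend functorially on $(\mathop{C}\limits^{\bullet},f)$; once that is in place everything else is a formal consequence of functoriality of $H_n$ and the universal property of the product. In particular I do not expect $\rho_n$ to be an isomorphism in general — the precise extent of its failure to be one is exactly what the subsequent discussion of the compatibility of $Lim^{\Gamma}$ with $H_n$ is about.
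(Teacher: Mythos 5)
Your construction is correct and is essentially the paper's own proof in slightly more categorical clothing: assembling the maps $H_n(\pi_i)$ via the universal property of the product produces exactly the paper's explicit formula $cls\big((a_i)_{i\in\Gamma}\big)\mapsto\big(cls(a_i)\big)_{i\in\Gamma}$, with the same verification that compatibility under the $f_{ij}$ passes to homology classes and the same componentwise naturality check. (Your route has the minor advantage that well-definedness on homology classes is automatic since each $\pi_i$ is a chain map, but the transformation obtained is identical.)
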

\begin{proof}
We already know that $H_n\mathop{lim^{\Gamma}}\limits^{\bullet},lim^{\Gamma}H_n^{\Gamma}$ are functors from $\textbf{C}\mathrm{-}\textbf{Rep}\Gamma$
to $\textbf{Ab}$. For any $(\mathop{A}\limits^{\bullet},f)\in\textbf{C}\mathrm{-}\textbf{Rep}\Gamma$, define $\rho_{n,(\mathop{A}\limits^{\bullet},f)}:H_n(\mathop{lim^{\Gamma}}\limits^{\bullet}(\mathop{A}\limits^{\bullet},f))\rightarrow lim^{\Gamma}(H_n^{\Gamma}(\mathop{A}\limits^{\bullet},f)),
cls((a_i)_{i\in\Gamma})\mapsto (cls(a_i))_{i\in\Gamma}$. It is easy to show that $\rho_{n,(\mathop{A}\limits^{\bullet},f)}$
is a morphism between groups. And for any $\alpha:(\mathop{A}\limits^{\bullet},f)\rightarrow (\mathop{B}\limits^{\bullet},g)$, we have $\rho_{n,(\mathop{B}\limits^{\bullet},g)}H_n(\mathop{lim^{\Gamma}}\limits^{\bullet}
(\alpha))=lim^{\Gamma}(H_n^{\Gamma}(\alpha))\rho_{n,(\mathop{A}\limits^{\bullet},f)}$.
\end{proof}
Let $\Gamma$ be a quiver. We already know $H_n,lim^{\Gamma}H_n^{\Gamma}$ are two functors from $\textbf{Top}\mathrm{-}\textbf{Rep}\Gamma$
to $\textbf{Ab}$ for each $n\geq0$.
\begin{Corollary}
There exists a natural transformation $\psi:H_n\rightarrow lim^{\Gamma}H_n^{\Gamma}$ for each $n\geq0$.
\end{Corollary}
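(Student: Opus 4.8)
The plan is to exhibit $\psi$ as the whiskering of the natural transformation $\rho_n$ of Theorem \ref{Cor4.17} by the functor $S^{\Gamma}:\textbf{Top}\mathrm{-}\textbf{Rep}\Gamma\rightarrow\textbf{C}\mathrm{-}\textbf{Rep}\Gamma$. The point is that the two functors $H_n$ and $lim^{\Gamma}H_n^{\Gamma}$ on $\textbf{Top}\mathrm{-}\textbf{Rep}\Gamma$ are obtained by precomposing the corresponding functors on $\textbf{C}\mathrm{-}\textbf{Rep}\Gamma$ from Theorem \ref{Cor4.17} with $S^{\Gamma}$, so that $\rho_n$ restricts along $S^{\Gamma}$ to give the required $\psi$.

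First I would record the two factorizations. By Definition \ref{Def4.9} together with the identity $\mathop{lim^{\Gamma}}\limits^{\bullet}(T,f)=\mathop{lim^{\Gamma}}\limits^{\bullet}(S^{\Gamma}(T,f))$, one has $H_n(T,f)=H_n(\mathop{lim^{\Gamma}}\limits^{\bullet}(S^{\Gamma}(T,f)))$, i.e. $H_n=(H_n\mathop{lim^{\Gamma}}\limits^{\bullet})\circ S^{\Gamma}$ as functors $\textbf{Top}\mathrm{-}\textbf{Rep}\Gamma\rightarrow\textbf{Ab}$; the same identity on morphisms follows from $(S^{\Gamma}(\alpha))_i=S(\alpha_i)$ and functoriality of $S$, $\mathop{lim^{\Gamma}}\limits^{\bullet}$ and $H_n$. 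Similarly, since $S^{\Gamma}(T,f)(i)=S(T(i))$ and $H_n(T(i))=H_n(S(T(i)))$, the functor $H_n^{\Gamma}:\textbf{Top}\mathrm{-}\textbf{Rep}\Gamma\rightarrow\textbf{Ab}\mathrm{-}\textbf{Rep}\Gamma$ of Definition \ref{Def4.13} equals the composite of $S^{\Gamma}$ with the functor $H_n^{\Gamma}:\textbf{C}\mathrm{-}\textbf{Rep}\Gamma\rightarrow\textbf{Ab}\mathrm{-}\textbf{Rep}\Gamma$, both on objects and on morphisms; applying $lim^{\Gamma}$ yields $lim^{\Gamma}H_n^{\Gamma}=(lim^{\Gamma}H_n^{\Gamma})\circ S^{\Gamma}$ as functors $\textbf{Top}\mathrm{-}\textbf{Rep}\Gamma\rightarrow\textbf{Ab}$, where the right-hand $lim^{\Gamma}H_n^{\Gamma}$ is the functor appearing in Theorem \ref{Cor4.17}.

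Then I would define, for each object $(T,f)$ of $\textbf{Top}\mathrm{-}\textbf{Rep}\Gamma$,
$$\psi_{(T,f)}:=\rho_{n,S^{\Gamma}(T,f)}:H_n(\mathop{lim^{\Gamma}}\limits^{\bullet}(S^{\Gamma}(T,f)))\rightarrow lim^{\Gamma}(H_n^{\Gamma}(S^{\Gamma}(T,f))).$$
By the two factorizations above, the source is $H_n(T,f)$ and the target is $lim^{\Gamma}H_n^{\Gamma}(T,f)$, so $\psi_{(T,f)}$ is a group homomorphism of the required type; explicitly it is $cls((a_i)_{i\in\Gamma})\mapsto(cls(a_i))_{i\in\Gamma}$ as in the proof of Theorem \ref{Cor4.17}. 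For naturality, given $\alpha:(T,f)\rightarrow(T',f')$ I would apply the naturality square for $\rho_n$ to the morphism $S^{\Gamma}(\alpha)$ in $\textbf{C}\mathrm{-}\textbf{Rep}\Gamma$, namely $\rho_{n,S^{\Gamma}(T',f')}\circ H_n(\mathop{lim^{\Gamma}}\limits^{\bullet}(S^{\Gamma}(\alpha)))=lim^{\Gamma}(H_n^{\Gamma}(S^{\Gamma}(\alpha)))\circ\rho_{n,S^{\Gamma}(T,f)}$, and then rewrite $H_n(\mathop{lim^{\Gamma}}\limits^{\bullet}(S^{\Gamma}(\alpha)))=H_n(\alpha)$ and $lim^{\Gamma}(H_n^{\Gamma}(S^{\Gamma}(\alpha)))=lim^{\Gamma}H_n^{\Gamma}(\alpha)$ using the factorizations on morphisms. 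This gives the naturality square for $\psi$ and shows $\psi=\rho_n\ast S^{\Gamma}$ is a natural transformation $H_n\Rightarrow lim^{\Gamma}H_n^{\Gamma}$.

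There is no serious obstacle here: the content lies entirely in Theorem \ref{Cor4.17}, and what remains is the bookkeeping of the two factorizations through $S^{\Gamma}$. The one place needing care is to verify these factorizations on morphisms and not merely on objects, which is immediate from $(S^{\Gamma}(\alpha))_i=S(\alpha_i)$ and functoriality of $S$, $H_n$, $\mathop{lim^{\Gamma}}\limits^{\bullet}$ and $lim^{\Gamma}$. Alternatively, one may avoid the whiskering formalism and simply define $\psi_{(T,f)}$ by the explicit formula $cls((a_i)_{i\in\Gamma})\mapsto(cls(a_i))_{i\in\Gamma}$, repeating verbatim the well-definedness and naturality verifications from the proof of Theorem \ref{Cor4.17}.
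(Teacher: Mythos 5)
Your proposal is correct and is essentially the paper's own argument: the paper also notes the factorizations $H_n=H_n\mathop{lim^{\Gamma}}\limits^{\bullet}S^{\Gamma}$ and $lim^{\Gamma}H_n^{\Gamma}=lim^{\Gamma}H_n^{\Gamma}S^{\Gamma}$ and then applies Theorem \ref{Cor4.17}, i.e. whiskers $\rho_n$ by $S^{\Gamma}$. You merely spell out the bookkeeping (factorization on morphisms and the naturality square) that the paper leaves implicit.
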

\begin{proof}
From the definition, we know that $H_n=H_n\mathop{lim^{\Gamma}}\limits^{\bullet}S^{\Gamma},lim^{\Gamma}H^{\Gamma}_n=lim^{\Gamma}H^{\Gamma}_nS^{\Gamma}:\textbf{Top}\mathrm{-}\textbf{Rep}\Gamma
\rightarrow\textbf{Ab}$. Applying Theorem \ref{Cor4.17}, we can get the inclusion.
\end{proof}

\begin{Lemma}\label{Lem4.19}
Let $\Gamma$ be a quiver, $\Gamma=\Gamma^{'}\cup \Gamma^{''}$ where $\Gamma^{'}$ and $\Gamma^{''}$ are sub-quivers of $\Gamma$ and $\Gamma^{'}\cap\Gamma^{''}=\emptyset$, $(G,g)$ be a  $\Gamma$-representation via abelian groups, $(\mathop{C}\limits^{\bullet},h)$ be a  $\Gamma$-representation via
chain complexes of abelian groups. Denote $(G_1,g_1)$ as a  $\Gamma^{'}$-representation via abelian groups such that $G_1(i)=G(i), (g_1)_{ij}=g_{ij}$ for all $i\in\Gamma^{'}_{0},i\rightarrow j\in\Gamma^{'}_1$. Similarly, define $(G_2,g_2),(\mathop{C_1}\limits^{\bullet},h_1),(\mathop{C_2}\limits^{\bullet},h_2)$. Then
$lim^{\Gamma}(G,g)\simeq lim^{\Gamma^{'}}(G_1,g_1)\prod lim^{\Gamma^{''}}(G_2,g_2)$ and
$\mathop{lim^{\Gamma}}\limits^{\bullet}(\mathop{C}\limits^{\bullet},h)\simeq\mathop{lim^{\Gamma^{'}}}\limits^{\bullet}(\mathop{C_1}\limits^{\bullet},h_1)\prod\mathop{lim^{\Gamma^{''}}}\limits^{\bullet}(\mathop{C_2}\limits^{\bullet},h_2)$

\end{Lemma}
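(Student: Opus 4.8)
The plan is to observe that the hypotheses force $\Gamma$ to be the disjoint union of $\Gamma'$ and $\Gamma''$ not only on vertices but also on arrows. Indeed, since $\Gamma=\Gamma'\cup\Gamma''$ and $\Gamma'\cap\Gamma''=\emptyset$, every arrow $\alpha\in\Gamma_1$ lies entirely in $\Gamma'_1$ or entirely in $\Gamma''_1$: an arrow whose source lay in $\Gamma'_0$ and whose target lay in $\Gamma''_0$ would have to belong to one of the two sub-quivers, forcing a vertex to be shared. Hence $\Gamma_0=\Gamma'_0\,\dot\cup\,\Gamma''_0$ and $\Gamma_1=\Gamma'_1\,\dot\cup\,\Gamma''_1$, and $g_1,g_2$ are simply the restrictions of $g$ to the two arrow sets.

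For the abelian group statement I would use the canonical group isomorphism $\prod_{i\in\Gamma_0}G(i)\cong\big(\prod_{i\in\Gamma'_0}G(i)\big)\times\big(\prod_{i\in\Gamma''_0}G(i)\big)$ that sends a tuple $(x_i)_{i\in\Gamma_0}$ to the pair $\big((x_i)_{i\in\Gamma'_0},(x_i)_{i\in\Gamma''_0}\big)$, and then check that it restricts to the claimed isomorphism of the limit subgroups. By Definition \ref{sequence}, $(x_i)_{i\in\Gamma_0}\in lim^{\Gamma}(G,g)$ iff $g_{ij}(x_i)=x_j$ for every arrow $i\to j$ in $\Gamma_1$; since $\Gamma_1=\Gamma'_1\,\dot\cup\,\Gamma''_1$, this is equivalent to demanding $g_{ij}(x_i)=x_j$ for all arrows in $\Gamma'_1$ and, separately, for all arrows in $\Gamma''_1$, i.e. to $(x_i)_{i\in\Gamma'_0}\in lim^{\Gamma'}(G_1,g_1)$ together with $(x_i)_{i\in\Gamma''_0}\in lim^{\Gamma''}(G_2,g_2)$. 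Thus the restriction is the desired isomorphism.

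For the chain-complex statement I would apply the previous paragraph degreewise. By the definition of $\mathop{lim^{\Gamma}}\limits^{\bullet}$ we have $\big(\mathop{lim^{\Gamma}}\limits^{\bullet}(\mathop{C}\limits^{\bullet},h)\big)_n=lim^{\Gamma}(C^n,h^n)$, and likewise over $\Gamma'$ and $\Gamma''$, so the degree-$n$ isomorphism $lim^{\Gamma}(C^n,h^n)\cong lim^{\Gamma'}(C_1^n,h_1^n)\times lim^{\Gamma''}(C_2^n,h_2^n)$ is already in hand. It remains to note that these isomorphisms commute with the differentials: the differential of $\prod_{i\in\Gamma}\mathop{C}\limits^{\bullet}(i)$ is the product of the differentials of the individual complexes $\mathop{C}\limits^{\bullet}(i)$, which under the regrouping $\prod_{\Gamma_0}=\prod_{\Gamma'_0}\times\prod_{\Gamma''_0}$ is exactly the product of the differentials of $\prod_{i\in\Gamma'}\mathop{C_1}\limits^{\bullet}(i)$ and $\prod_{i\in\Gamma''}\mathop{C_2}\limits^{\bullet}(i)$. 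Restricting to the limit subcomplexes, the family of degreewise isomorphisms therefore assembles into a chain isomorphism $\mathop{lim^{\Gamma}}\limits^{\bullet}(\mathop{C}\limits^{\bullet},h)\simeq\mathop{lim^{\Gamma'}}\limits^{\bullet}(\mathop{C_1}\limits^{\bullet},h_1)\prod\mathop{lim^{\Gamma''}}\limits^{\bullet}(\mathop{C_2}\limits^{\bullet},h_2)$.

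There is no serious obstacle here; the argument is essentially bookkeeping. The only point deserving a line of care is the opening observation that disjointness rules out arrows running between $\Gamma'$ and $\Gamma''$ — without it the index set $\Gamma_1$ would not split and the whole argument collapses. One may also remark, for later use, that the same reasoning yields a decomposition over an arbitrary (possibly infinite) family of pairwise-disjoint sub-quivers, in particular over the connected components of $\Gamma$.
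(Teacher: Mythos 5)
Your proposal is correct and follows essentially the same route as the paper: the paper's proof defines exactly the map $(x_i)_{i\in\Gamma}\mapsto\big((x_i)_{i\in\Gamma^{'}},(x_i)_{i\in\Gamma^{''}}\big)$ and declares the verification easy, while you supply the details (in particular the useful observation that disjointness of the sub-quivers forces $\Gamma_1=\Gamma'_1\,\dot\cup\,\Gamma''_1$, so the limit conditions split) and the degreewise compatibility with the differentials for the chain-complex case.
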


\begin{proof}
Define $\phi$ from $lim^{\Gamma}(G,g)$ to $lim^{\Gamma^{'}}(G_1,g_1)\prod lim^{\Gamma^{''}}(G_2,g_2)$,
$(x_i)_{i\in\Gamma}\mapsto ((x_i)_{i\in\Gamma^{'}},(x_i)_{i\in\Gamma^{''}})$. Then it is easy to verify $\phi$ is an abelian group isomorphism. Similarly, we can prove $\mathop{lim^{\Gamma}}\limits^{\bullet}(\mathop{C}\limits^{\bullet},h)\simeq\mathop{lim^{\Gamma^{'}}}\limits^{\bullet}(\mathop{C_1}\limits^{\bullet},h_1)\prod\mathop{lim^{\Gamma^{''}}}\limits^{\bullet}(\mathop{C_2}\limits^{\bullet},h_2)$.
\end{proof}

\begin{Theorem}\label{Lem4.20}
Let $\Gamma$ be a quiver, $\Gamma=\Gamma^{'}\cup \Gamma^{''}$ where $\Gamma^{'}$ and $\Gamma^{''}$ are sub-quivers of $\Gamma$ and $\Gamma^{'}\cap\Gamma^{''}=\emptyset$, and $(T,f)\in$ $\textbf{Top}\mathrm{-}\textbf{Rep}\Gamma$. Denote $(T_1,f_1)$ as a top-representation of $\Gamma^{'}$ such that
$T_1(i)=T(i), (f_1)_{ij}=f_{ij}$ for any $i, j \in\Gamma^{'}_{0}$. Similarly, denote $(T_2,f_2)$ as a top-representation of $\Gamma^{''}$. Then
$H_n(T,f)\simeq H_n(T_1,f_1)\prod H_n(T_2,f_2)$ for all $n\geq0$.
\end{Theorem}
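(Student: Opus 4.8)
The plan is to reduce the statement to Lemma \ref{Lem4.19} together with the elementary fact that the homology functor commutes with finite direct products of chain complexes of abelian groups.

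First I would unwind the definitions. By Definition \ref{Def4.9} we have $H_n(T,f)=H_n(\mathop{lim^{\Gamma}}\limits^{\bullet}(T,f))$, and by definition $\mathop{lim^{\Gamma}}\limits^{\bullet}(T,f)=\mathop{lim^{\Gamma}}\limits^{\bullet}(S^{\Gamma}(T,f))$. Since $\Gamma'\cap\Gamma''=\emptyset$ and $\Gamma'\cup\Gamma''=\Gamma$, every arrow of $\Gamma$ lies entirely inside $\Gamma'$ or inside $\Gamma''$ (so $\Gamma_1=\Gamma'_1\,\dot{\cup}\,\Gamma''_1$ and no arrow joins $\Gamma'_0$ to $\Gamma''_0$); thus $\Gamma$ really is the disjoint union of the two sub-quivers, and the hypotheses of Lemma \ref{Lem4.19} apply. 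I would then record the bookkeeping point that, writing $(\mathop{C}\limits^{\bullet},h):=S^{\Gamma}(T,f)$ and letting $(\mathop{C_1}\limits^{\bullet},h_1)$ be its restriction to $\Gamma'$ in the sense of Lemma \ref{Lem4.19}, one has $(\mathop{C_1}\limits^{\bullet},h_1)=S^{\Gamma'}(T_1,f_1)$: indeed $\mathop{C}\limits^{\bullet}(i)=S(T(i))=S(T_1(i))$ for $i\in\Gamma'_0$ and $h_{ij}=S(f_{ij})=S((f_1)_{ij})$ for every arrow $i\to j$ of $\Gamma'$, directly from the definition of $S^{\Gamma}$; the same holds for $\Gamma''$.

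Next I would apply Lemma \ref{Lem4.19} to $(\mathop{C}\limits^{\bullet},h)=S^{\Gamma}(T,f)$, obtaining a chain isomorphism
$$\mathop{lim^{\Gamma}}\limits^{\bullet}(T,f)\simeq\mathop{lim^{\Gamma'}}\limits^{\bullet}(S^{\Gamma'}(T_1,f_1))\prod\mathop{lim^{\Gamma''}}\limits^{\bullet}(S^{\Gamma''}(T_2,f_2))=\mathop{lim^{\Gamma'}}\limits^{\bullet}(T_1,f_1)\prod\mathop{lim^{\Gamma''}}\limits^{\bullet}(T_2,f_2),$$
where the last equality is again just the definition of $\mathop{lim^{\Gamma}}\limits^{\bullet}(-)$ on top-representations applied to $\Gamma'$ and $\Gamma''$.

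Finally I would apply $H_n$ to this chain isomorphism. In a finite direct product of chain complexes the boundary operators act componentwise, so cycles and boundaries split as products and $H_n(\mathop{A}\limits^{\bullet}\prod\mathop{B}\limits^{\bullet})\cong H_n(\mathop{A}\limits^{\bullet})\prod H_n(\mathop{B}\limits^{\bullet})$ naturally; combining this with the isomorphism above and Definition \ref{Def4.9} gives
$$H_n(T,f)\simeq H_n(T_1,f_1)\prod H_n(T_2,f_2)\qquad(n\geq0).$$
The only things requiring any care are the two identifications in the second paragraph — that restricting $S^{\Gamma}(T,f)$ along a sub-quiver commutes with forming $S$ on the restricted top-representation, and that $\mathop{lim^{\Gamma}}\limits^{\bullet}$ on the sub-quiver part agrees with $\mathop{lim}\limits^{\bullet}$ of the restricted representation. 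Both are routine once the definitions of $S^{\Gamma}$, $S^{\Gamma}_n$ and $\mathop{lim^{\Gamma}}\limits^{\bullet}$ are unwound; there is no genuine obstacle here, since the real content of the theorem already sits in Lemma \ref{Lem4.19}.
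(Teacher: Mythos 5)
Your proposal is correct and follows essentially the same route as the paper's own proof: apply Lemma \ref{Lem4.19} to $S^{\Gamma}(T,f)$, identify the restrictions to $\Gamma'$ and $\Gamma''$ with $S^{\Gamma'}(T_1,f_1)$ and $S^{\Gamma''}(T_2,f_2)$, and then take $H_n$ of the resulting product decomposition, using that homology commutes with products of chain complexes. Your write-up merely makes explicit the bookkeeping identifications that the paper leaves implicit.
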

\begin{proof}
Since $\Gamma=\Gamma^{'}\cup \Gamma^{''}, \Gamma^{'}\cap\Gamma^{''}=\emptyset$, we have
\begin{equation}
\begin{split} \mathop{lim^{\Gamma}}\limits^{\bullet}(T,f)=&\mathop{lim^{\Gamma}}\limits^{\bullet}(S^{\Gamma}(T,f))\\ \simeq
&\mathop{lim^{\Gamma^{'}}}\limits^{\bullet}(S^{\Gamma^{'}}(T_1,f_1))\prod \mathop{lim^{\Gamma^{''}}}\limits^{\bullet}(S^{\Gamma^{''}}(T_2,f_2))\\=&\mathop{lim^{\Gamma^{'}}}\limits^{\bullet}(T_1,f_1)
\prod \mathop{lim^{\Gamma^{''}}}\limits^{\bullet}(T_2,f_2)
\end{split}
\end{equation}
 according to Lemma \ref{Lem4.19}. Thus, $$H_n(T,f)=H_n(\mathop{lim^{\Gamma}}\limits^{\bullet}(T,f))\simeq H_n(\mathop{lim^{\Gamma^{'}}}\limits^{\bullet}(T_1,f_1))
\prod H_n(\mathop{lim^{\Gamma^{''}}}\limits^{\bullet}(T_2,f_2))=H_n(T_1,f_1)\prod H_n(T_2,f_2)$$ for all $n\geq0$
\end{proof}
\begin{Corollary}\label{Cor4.21}
Let $\Gamma$ be a quiver and $(\Gamma_j)_{1\leq j\leq m}$ be all components of $\Gamma$, $(T,f)$ be a top-representation of $\Gamma$. Then
$H_n(T,f)=\prod_{1\leq j\leq m}H_n(T_j,f_j)$ for all $n\geq0$ where $(T_j,f_j)$ defined as above.
\end{Corollary}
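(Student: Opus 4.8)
The plan is to reduce everything to the two-piece decomposition already treated in Theorem \ref{Lem4.20} and then to induct on the number $m$ of components. First I would record the elementary graph-theoretic fact that distinct connected components of $\Gamma$ are pairwise disjoint sub-quivers whose union is all of $\Gamma$; hence for every $1\le\ell<m$ we get a decomposition $\Gamma=\Gamma_\ell\cup\Gamma^{(\ell)}$ with $\Gamma^{(\ell)}:=\Gamma_{\ell+1}\cup\cdots\cup\Gamma_m$ a sub-quiver and $\Gamma_\ell\cap\Gamma^{(\ell)}=\emptyset$. Restricting $(T,f)$ along these two sub-quivers produces $(T_\ell,f_\ell)$ and a top-representation $(T^{(\ell)},f^{(\ell)})$ of $\Gamma^{(\ell)}$, which is exactly the input required by Theorem \ref{Lem4.20}.

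Next I would carry out the induction on $m$. The base case $m=1$ is immediate, since then $\Gamma_1=\Gamma$ and $(T_1,f_1)=(T,f)$. For the inductive step, apply Theorem \ref{Lem4.20} to the decomposition $\Gamma=\Gamma_1\cup\Gamma^{(1)}$, obtaining
\[
H_n(T,f)\simeq H_n(T_1,f_1)\prod H_n\big(T^{(1)},f^{(1)}\big)\qquad(n\ge0).
\]
The sub-quiver $\Gamma^{(1)}$ has exactly the $m-1$ components $\Gamma_2,\dots,\Gamma_m$, and the restriction of $(T^{(1)},f^{(1)})$ to $\Gamma_j$ coincides with $(T_j,f_j)$ for each $j$ (both assign $T(i)$ to a vertex $i$ and $f_{ij}$ to an arrow $i\rightarrow j$). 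So the induction hypothesis gives $H_n(T^{(1)},f^{(1)})\simeq\prod_{2\le j\le m}H_n(T_j,f_j)$ for all $n\ge0$. Substituting this into the displayed isomorphism and using associativity of the direct product yields $H_n(T,f)\simeq\prod_{1\le j\le m}H_n(T_j,f_j)$ for all $n\ge0$, as claimed.

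I do not expect any real obstacle: all the homological content sits in Theorem \ref{Lem4.20} (and behind it, Lemma \ref{Lem4.19} on the behaviour of $\mathop{lim^{\Gamma}}\limits^{\bullet}$ under disjoint unions), and the corollary is a purely formal consequence. The only points deserving a line of care are that iterated restriction is compatible (restricting $(T,f)$ to $\Gamma^{(1)}$ and then to $\Gamma_j$ equals restricting directly to $\Gamma_j$, which is clear from the definitions) and that the finiteness of $m$ is what legitimizes the induction; for an arbitrary, possibly infinite family of components one would instead invoke the evident generalization of Lemma \ref{Lem4.19} to arbitrary disjoint unions of sub-quivers.
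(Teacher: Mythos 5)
Your proposal is correct and follows essentially the same route as the paper: the paper's proof simply notes that the components are pairwise disjoint with union $\Gamma$ and then inducts on $m$ using Theorem \ref{Lem4.20}, exactly as you do. Your write-up just spells out the bookkeeping (compatibility of iterated restriction, the base case) that the paper leaves implicit.
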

\begin{proof}
Note $\Gamma_{j_1}\cap\Gamma_{j_2}=\emptyset$ for $j_1\neq j_2$ and $\Gamma=\cup_{1\leq j\leq m}\Gamma_j$. Then the conclusion can be proved by induction on $m$.
\end{proof}
\begin{Corollary}
Let $\Gamma$ be an acyclic quiver having $m$ components, X be any topological space. Donate $(T,f)$ as a top-representation of $\Gamma$ where
$T(i)=X$ and $f_{ij}=Id_{X}$. Then $H_n(T,f)=(H_n(X))^m$ for all $n\geq0$
\end{Corollary}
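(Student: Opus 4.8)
The plan is to reduce everything to the single-component case, for which the two results needed are already available. First I would apply Corollary~\ref{Cor4.21}: writing $(\Gamma_j)_{1\leq j\leq m}$ for the connected components of $\Gamma$ and $(T_j,f_j)$ for the restriction of $(T,f)$ to $\Gamma_j$, one gets $H_n(T,f)=\prod_{1\leq j\leq m}H_n(T_j,f_j)$ for all $n\geq0$. So it remains only to identify $H_n(T_j,f_j)$ for each fixed $j$.

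Second, I would check that each $(T_j,f_j)$ satisfies the hypotheses of Corollary~\ref{Cor4.13}. The sub-quiver $\Gamma_j$ is connected, being a component, and acyclic, being a sub-quiver of the acyclic $\Gamma$. By construction $T_j(i)=X$ for every vertex $i$ of $\Gamma_j$ and $(f_j)_{ij}=Id_{X}$ for every arrow $i\rightarrow j$ of $\Gamma_j$; identity maps are isomorphisms, and if $\alpha$ and $\beta$ are parallel arrows then $(f_j)_{\alpha}=Id_{X}=(f_j)_{\beta}$. Hence Corollary~\ref{Cor4.13} applies and gives $H_n(T_j,f_j)\simeq H_n(X)$ for all $n\geq0$.

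Combining the two steps yields $H_n(T,f)\simeq\prod_{1\leq j\leq m}H_n(X)=(H_n(X))^m$ for all $n\geq0$, which is the assertion. I do not expect any genuine obstacle here: the only point requiring a line of verification is that the restricted representations $(T_j,f_j)$ meet the isomorphism and parallel-arrow conditions of Corollary~\ref{Cor4.13}, and both hold trivially because every structure map of $(T,f)$ is an identity map. As an alternative route one could bypass Corollary~\ref{Cor4.13} and argue straight from Lemma~\ref{lem4.9}: on the connected acyclic quiver $\Gamma_j$ with all structure maps equal to $Id_{X}$, that lemma shows $\mathop{lim^{\Gamma_{j}}}\limits^{\bullet}(T_j,f_j)\simeq S(X)$, after which one takes $H_n$ of both sides and multiplies over $j$ using Corollary~\ref{Cor4.21}.
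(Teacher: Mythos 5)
Your proof is correct and follows essentially the same route as the paper, which cites exactly Corollary \ref{Cor4.13} and Corollary \ref{Cor4.21}; your added verification that each restricted component representation satisfies the isomorphism and parallel-arrow hypotheses is a welcome bit of detail the paper leaves implicit.
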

\begin{proof}
Using Corollary \ref{Cor4.13} and \ref{Cor4.21}.
\end{proof}

 Let
$S^n=\{(x_1,\dots,x_{n+1})\in\mathds{R}^{n+1}|x_1^{2}+\dots,+x_{n+1}^2=1\}$ denote the $n$-dimensional {\bf unit sphere}.

For $n=1$, define a continuous map $F:S^{1}\rightarrow S^{1}$ with $(cos\theta,sin\theta)\mapsto(cos(\theta+t),sin(\theta+t))$ where $ut\neq2\pi$ for all
$u\in\mathds{Z}$. Apparently, $F$ is a homeomorphism and $F^{k}(x)\neq x$ for all $x\in S^{1},k\geq1$.

\begin{Theorem}\label{The4.22}
Let $\Gamma$ be a connected quiver, and $(T,f)$ be a top-representation with $T(i)=S^{1},f_{ij}=F$. Then $H_1(T,f)=\mathds{Z}$ if $\Gamma$
is acyclic, and $H_1(T,f)=0$ if $\Gamma$ contains oriented circle.
\end{Theorem}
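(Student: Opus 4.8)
The argument naturally divides according to whether $\Gamma$ is acyclic, as in the statement of Theorem \ref{The4.22}.

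\emph{The acyclic case.} Here I would simply invoke Corollary \ref{Cor4.13}. Since $F\colon S^1\to S^1$ is a homeomorphism, every structure map $f_{ij}=F$ is an isomorphism in $\textbf{Top}$, and $f_\alpha=f_\beta=F$ whenever $\alpha$ and $\beta$ are parallel arrows; thus the hypotheses of Corollary \ref{Cor4.13} are satisfied, and it gives $H_1(T,f)\simeq H_1(T(i))=H_1(S^1)=\mathds{Z}$.

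\emph{The case of an oriented cycle.} The plan is to prove the stronger fact that $\mathop{lim^{\Gamma}}\limits^{\bullet}(T,f)$ is the zero complex; by Definition \ref{Def4.9} this yields $H_n(T,f)=0$ for all $n$, in particular $H_1(T,f)=0$. Fix an oriented cycle $v_0\to v_1\to\cdots\to v_{m-1}\to v_0$ in $\Gamma$ (so $m\ge 1$) and fix $n\ge 0$. By construction $\big(\mathop{lim^{\Gamma}}\limits^{\bullet}(T,f)\big)_n=lim^{\Gamma}(S^{\Gamma}_n(T,f))$ consists of the tuples $(x_i)_{i\in\Gamma}$ with $x_i\in S_n(S^1)$ satisfying $S_n(F)(x_i)=x_j$ for every arrow $i\to j$. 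Composing the $m$ equalities obtained by walking once around the cycle, and using functoriality of $S_n$, yields $S_n(F^m)(x_{v_0})=x_{v_0}$. I would then propagate: if $x_i=0$ and $i\to j$, then $x_j=S_n(F)(x_i)=0$; and if $j\to i$, then $x_j=0$ since $S_n(F)$ is injective ($F$ being a homeomorphism). As $\Gamma$ is connected, every vertex is joined to $v_0$ by a walk in the underlying graph, so once $x_{v_0}=0$ is known it follows that $(x_i)_{i\in\Gamma}=0$, hence $lim^{\Gamma}(S^{\Gamma}_n(T,f))=0$ for every $n$.

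It therefore remains to show $x_{v_0}=0$, and this is the heart of the matter. The point is that $F$ has no periodic points ($F^k(x)\ne x$ for all $x\in S^1$ and all $k\ge 1$), hence neither does $F^m$; consequently, for any singular $n$-simplex $\sigma\colon\Delta^n\to S^1$ the simplices $\sigma,\ F^m\circ\sigma,\ (F^m)^2\circ\sigma,\dots$ are pairwise distinct, because $(F^m)^k\circ\sigma=\sigma$ with $k\ge 1$ would force $F^{mk}$ to fix every point of the nonempty set $\sigma(\Delta^n)$. Thus $S_n(F^m)$ permutes the canonical basis of $S_n(S^1)$ with all orbits infinite, and a finitely supported chain fixed by such a permutation must be constant along each (infinite) orbit, hence $0$; so $x_{v_0}=0$. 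I expect this step — that $S_n(F^m)$ admits no nonzero finitely supported fixed chain — to be the only delicate ingredient; it rests solely on the strong fixed-point-freeness of $F$ assumed just before the theorem, while everything else is formal bookkeeping with the definition of $\mathop{lim^{\Gamma}}\limits^{\bullet}$ and with connectedness of $\Gamma$.
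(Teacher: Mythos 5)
Your proposal is correct and follows essentially the same route as the paper: the acyclic case via Corollary \ref{Cor4.13}, and the oriented-cycle case by showing $\mathop{lim^{\Gamma}}\limits^{\bullet}(T,f)=0$, using the relation $S_n(F^m)(x_{v_0})=x_{v_0}$ obtained by walking around the cycle together with the fixed-point-freeness of all powers of $F$, then propagating $0$ to all vertices by connectedness and injectivity of $S_n(F)$. The only difference is cosmetic: you phrase the key step as ``all basis orbits of $S_n(F^m)$ are infinite, so no nonzero finitely supported chain is fixed,'' whereas the paper notes that $F^m$ permutes the finite support and raises this permutation to a suitable power to reach the same contradiction.
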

\begin{proof}
If $\Gamma$ is acyclic, then $H_1(T,f)=H_1(S^{1})$ according to Corollary \ref{Cor4.13}.

 If $\Gamma$ contains circle. Assume, the lengthen of this circle is $m$($m\geq1$) and this circle is $i\rightarrow i+1\rightarrow i+2\rightarrow\dots\rightarrow i+m-1\rightarrow i$. Then we will show $\mathop{lim^{\Gamma}}\limits^{\bullet}(T,f)=0$. For all $(\alpha_j)_{j\in\Gamma}\in(\mathop{lim^{\Gamma}}\limits^{\bullet}(T,f))_n$ where $n\geq0$, we have
 \begin{equation}
 \begin{split}
 (f_{i,i+1})_{*}(\alpha_i)&=F_{*}(\alpha_i)=\alpha_{i+1},\\
 (f_{i+1,i+2})_{*}(\alpha_{i+1})&=F_{*}(\alpha_{i+1})=\alpha_{i+2},\\
 \dots &\dots, \\
 (f_{i+m-2,i+m-1})_{*}(\alpha_{i+m-2})&=F_{*}(\alpha_{i+m+2})=\alpha_{m-1}, \\ (f_{i+m-1,i})_{*}(\alpha_{i+m-1})&=F_{*}(\alpha_{i+m-1})=\alpha_i.
 \end{split}
\end{equation}
Thus, we have
$(F_{*})^{m}(\alpha_{i})=(F^{m})^{*}(\alpha_i)=\alpha_{i}.$

If $\alpha_{i}\neq0$, assume, $\alpha_i=k_1f_1+k_2f_2+\dots+k_vf_v$ where $v\in\mathds{N}^{+}, k_i\neq0, f_{i}\neq
f_j$ if $i\neq j$, then we have
\begin{equation}
\begin{split}
 (F^m)^{*}(\alpha_i)=&k_1F^{m}f_1+\dots+k_vF^{m}f_v\\
 =&k_1f_1+\dots+k_vf_v.
 \end{split}
 \end{equation}
Since $f_1,\dots,f_v,F^mf_1,\dots,F^{m}f_v$
are basic elements in free abelian group $S_n(S^1)$, then $\{f_1,\dots,f_v\}=\{F^mf_1,\dots,F^mf_v\}$, that is, $F^mf_i=f_{s(i)}$ for all $1\leq i\leq v$ where $s\in S_{m}$. Then $(F^m)^{m!}f_i=F^{mm!}f_i=f_{s^{m!}(i)}=f_i$. Thus for all $x\in S^1$ $f_i(x)=F^{mm!}(f_i(x))$, which is a contradiction.

Now, we have $\alpha_i=0$. Since $\Gamma$ is connected and all $(f_{ij})_{*}=F_{*}$ are isomorphisms, $\alpha_j=0$ for all $j\in\Gamma$. Thus, $\mathop{lim^{\Gamma}}\limits^{\bullet}(T,f)=0$ and $H_1(T,f)=0$
\end{proof}
\begin{Remark}\label{Rem4.24}
Define a continuous map $\overline{f}:S^{1}\rightarrow S^{1},(x_1,x_2)\mapsto (-x_1,-x_2)$. Let $X$ a topological space,
$(g_i)_{1\leq i\leq n}$ be a family of distinct continuous maps from $X$ to $S^1$. We will show $\{g_1,\dots,g_n\}=\{h_1,\overline{f}h_1,\dots,
h_m,\overline{f}h_m\}$ where $n=2m,\{h_1,\dots,h_m\}\subset\{g_1,\dots,g_n\}$ if $\{g_1,\dots,g_n\}=\{\overline{f}g_1,\dots,\overline{f}g_n\}$. First, we have $\overline{f}g_i=g_{\rho(i)}$
where $\rho\in S_n$. Then $g_{i}=\overline{f}^2g_i=\overline{f}g_{\rho(i)}=g_{\rho^2(i)},\forall 1\leq i\leq n$, and thus
$\rho^2(i)=i,\forall 1\leq i\leq n$, that is, $\rho^2=(1)\in S_n$. Since $\overline{f}g_i\neq g_i,\forall 1\leq i\leq n$
, then $\rho(i)\neq i,\forall 1\leq i\leq n$. Thus $n$ is even(suppose $n=2m$) and $\rho=(i_1i_2)(i_3i_4)\dots(i_{n-1}i_n)
\in S_n$ where $\{i_1,i_2,\dots,i_n\}=\{1,2,\dots,n\}$. Let $h_t=g_{i_{2t-1}},\forall 1\leq t\leq m$. Then, $\overline{f}h_t=
\overline{f}g_{i_{2t-1}}=g_{\rho(i_{2t-1})}=g_{i_{2t}},\forall 1\leq t\leq m$. Then,$\{g_1,\dots,g_n\}=
\{g_{i_1},\dots,g_{i_n}\}=\{h_1,\overline{f}h_1,\dots,h_m,\overline{f}h_m\}$
\end{Remark}

Let $\Gamma$ be a connected quiver containing oriented circles, and $(T,f)\in\textbf{Top}\mathrm{-}\textbf{Rep}\Gamma$ with $T(i)=S^1,f_{ij}=\overline{f},\forall
i,i\rightarrow j\in\Gamma$. We select one point $i_0\in\Gamma$, from Lemma \ref{lem4.9}, we know that there
exists an injective chain map $L$ from $\mathop{lim^{\Gamma}}\limits^{\bullet}(T,f)$ to $S(S^1)$. Then we have the following theorem:

\begin{Theorem}\label{The4.24}
$H_0(\frac{S(S^1)}{ImL})\simeq\mathds{Z}_2$ if $\Gamma$ contains at least one oriented circle whose length is odd; otherwise, $H_0(\frac{S(S^1)}{ImL})=0$.
\end{Theorem}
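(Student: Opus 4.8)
The plan is to extract $H_0$ of the quotient complex from a short exact sequence of chain complexes. Since every $f_{ij}=\overline{f}$ is a homeomorphism and all parallel arrows carry the same map, Lemma \ref{lem4.9} applies, $L$ is an injective chain map, and we obtain a short exact sequence $0\to\mathop{lim^{\Gamma}}\limits^{\bullet}(T,f)\xrightarrow{L}S(S^1)\to S(S^1)/\mathrm{Im}\,L\to 0$ in the category $\textbf{C}$. Passing to the long exact homology sequence and using $S_{-1}(S^1)=0$ (so that $H_{-1}$ of both the sub-complex and of $S(S^1)$ vanish), the tail reads $H_0\big(S(S^1)/\mathrm{Im}\,L\big)\cong\mathrm{coker}\big(H_0(\mathop{lim^{\Gamma}}\limits^{\bullet}(T,f))\to H_0(S(S^1))\big)$. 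As $S^1$ is path-connected, $H_0(S(S^1))\cong\mathds{Z}$ via the isomorphism sending the class of a $0$-chain to the sum of its coefficients, and since the comparison map on $H_0$ is induced by the inclusion of chains, the relevant image is just the subgroup of $\mathds{Z}$ obtained by applying ``sum of coefficients'' to the group of $0$-chains $(\mathrm{Im}\,L)_0\subseteq S_0(S^1)$. So the whole computation comes down to identifying $(\mathrm{Im}\,L)_0$.

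Next I would determine $\mathrm{Im}\,L$ as a subcomplex of $S(S^1)$. Write $\overline{f}_*=S(\overline{f})$; since $\overline{f}\circ\overline{f}=\mathrm{id}_{S^1}$ we have $\overline{f}_*^{\,2}=\mathrm{id}$. An element of $\mathop{lim^{\Gamma}}\limits^{\bullet}(T,f)$ is a family $(\alpha_i)_{i\in\Gamma}$ with $\overline{f}_*(\alpha_i)=\alpha_j$ for every arrow $i\to j$; applying $\overline{f}_*$ shows this relation also says $\alpha_i=\overline{f}_*(\alpha_j)$, so along any walk in $\Gamma$ one component is obtained from another by a power of $\overline{f}_*$ whose exponent is the length of the walk. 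Arguing by connectedness as in the proof of Lemma \ref{lem4.9}, $L$ identifies $\mathop{lim^{\Gamma}}\limits^{\bullet}(T,f)$ with the set of those $c\in S(S^1)$ for which the propagated family is well defined. The dichotomy to establish is: (a) if $\Gamma$ has no oriented circle of odd length, then every $0$-chain placed at $i_0$ extends, $L$ is surjective, and $\mathrm{Im}\,L=S(S^1)$; (b) if $\Gamma$ contains an oriented circle of odd length $\ell$ through a vertex $v$, then going once around it forces $\overline{f}_*^{\,\ell}(\alpha_v)=\alpha_v$, and $\overline{f}_*^{\,\ell}=\overline{f}_*$, so $\alpha_v$ is $\overline{f}_*$-fixed; propagation then forces every component to equal this one invariant chain, while conversely any $\overline{f}_*$-fixed chain yields a legitimate constant family. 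Hence in case (b), $\mathrm{Im}\,L$ equals the fixed subcomplex $\{c\in S(S^1)\mid \overline{f}_*(c)=c\}$.

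Granting this dichotomy, in case (a) the image of $H_0(\mathrm{Im}\,L)$ in $H_0(S(S^1))$ is everything, so the cokernel, hence $H_0\big(S(S^1)/\mathrm{Im}\,L\big)$, is $0$. In case (b), $(\mathrm{Im}\,L)_0$ is the group of $\overline{f}_*$-fixed $0$-chains; since the antipodal map $\overline{f}$ permutes the canonical basis of $S_0(S^1)$ (the points of $S^1$) with no fixed point, this group is free abelian on the elements $p+\overline{f}(p)$, $p\in S^1$, each of coefficient-sum $2$. Therefore its ``sum of coefficients'' image in $\mathds{Z}$ is $2\mathds{Z}$, and $H_0\big(S(S^1)/\mathrm{Im}\,L\big)\cong\mathds{Z}/2\mathds{Z}=\mathds{Z}_2$, which is the assertion.

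The step I expect to be the main obstacle is the dichotomy of the second paragraph: showing in case (b) that $\mathrm{Im}\,L$ fills out the whole fixed subcomplex rather than a proper subgroup of it, and in case (a) that $L$ is genuinely onto. Both reduce to the bookkeeping, via connectedness of $\Gamma$, that identifies the ``holonomy'' $\overline{f}_*^{\,(\text{walk length})}$ accumulated around closed walks --- using only $\overline{f}_*^{\,2}=\mathrm{id}$ to see this holonomy is $\{\mathrm{id}\}$ or $\{\mathrm{id},\overline{f}_*\}$, governed by the parity condition on the circles of $\Gamma$ --- and some care is needed to phrase that parity condition correctly. Once $\mathrm{Im}\,L$ is pinned down, only the routine long-exact-sequence and ``sum of coefficients'' computation above remains.
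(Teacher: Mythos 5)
Your proposal follows essentially the same route as the paper's proof: in the odd case you identify $\mathrm{Im}\,L$ with the $\overline{f}_*$-fixed subcomplex, i.e.\ the subgroups spanned by the elements $g+\overline{f}g$ (the paper's $B_n$, obtained there via Remark \ref{Rem4.24}), and in the remaining case you assert $\mathrm{Im}\,L=S(S^1)$ just as the paper does by appealing to Lemma \ref{lem4.9}; you then use the same short exact sequence $0\rightarrow\mathop{lim^{\Gamma}}\limits^{\bullet}(T,f)\xrightarrow{L}S(S^1)\rightarrow S(S^1)/\mathrm{Im}\,L\rightarrow0$ and the same coefficient-sum argument showing the image in $H_0(S^1)\cong\mathds{Z}$ is $2\mathds{Z}$, hence the quotient is $\mathds{Z}_2$. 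The only deviations are harmless shortcuts (the orbit-basis observation replacing Remark \ref{Rem4.24}, and computing the cokernel directly from $(\mathrm{Im}\,L)_0$ instead of first exhibiting the generator of $H_0(T,f)\cong\mathds{Z}$ as the paper does), so this matches the paper's argument.
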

\begin{proof}
If $\Gamma$ contains no oriented circle whose length is odd, that is, $\Gamma$ only contains oriented circles whose lengths are even.
Since $\overline{f}^{m}=Id_{S^1}$ for any positive even number $m$, then $ImL=S(S^1)$ according to Lemma \ref{lem4.9}.
Therefore, $\frac{S(S^1)}{ImL}=0$ and $H_0(\frac{S(S^1)}{ImL})=0$ .

If $\Gamma$ contains at least one oriented circle whose length is odd.
For any positive odd number $m$, we have $\overline{f}^m=\overline{f}$.
For all $n\geq0$, let $B_n$ denote the free subgroup of $S_n(S^1)$ with basis
\begin{equation}
\{g+\overline{f}g|g\mathrm{\;is\;the\;continuous\;map
\;from\;}\Delta^n\mathrm{\;to\;}S^1\}.
\end{equation}

We first show $(ImL)_n=B_n$ for all $n\geq0$

For all $(\alpha_i)_{i\in\Gamma}\in(\mathop{lim^{\Gamma}}\limits^{\bullet}(T,f))_{n},n\geq0$, then $(f_{ij})_{*}(\alpha_i)=\overline{f}_{*}(\alpha_i)=\alpha_j$.
Since $\Gamma$ is connected and $(\overline{f}_{*})^{2}=Id$, then $\alpha_i=\alpha_{i_0}\mathrm{\;or\;}\overline{f}_{*}(\alpha_{i_0})$
for all $i\in\Gamma$. Since $\Gamma$ contains at least one oriented circle whose length $m$ is odd, then $(\overline{f}_{*})^{m}(\alpha_{i_0})=
\alpha_{i_0}$ or $(\overline{f}_{*})^{m}(\overline{f}_{*}(\alpha_{i_0}))=\overline{f}_{*}(\alpha_{i_0})$.
In both cases, we have $\overline{f}_{*}(\alpha_{i_0})=\alpha_{i_0}$.
If $\alpha_{i_0}=0$, then $L_n(\alpha_{i})_{i\in\Gamma}=\alpha_{i_0}=0\in B_n$
Assume $\alpha_{i_0}\neq0$. Then $\alpha_{i_0}=k_1(g_{11}+\dots+g_{1i_1})+\dots+k_s(g_{s1}+\dots+g_{si_s})$,
where $s$ is a positive integer, $k_i(1\leq i\leq s)$ are distinct nonzero integers, and $g_{tj}(1\leq t\leq s,1\leq j\leq i_t)$
are distinct continuous maps from $\Delta^n$ to $S^1$.
Then,
\begin{equation}
\begin{split}
\overline{f}_{*}(\alpha_{i_0})&=k_1(\overline{f}g_{11}+\dots+\overline{f}g_{1i_1})+\dots+k_s(\overline{f}g_{s1}+\dots+\overline{f}g_{si_s})\\
&=k_1(g_{11}+\dots+g_{1i_1})+\dots+k_s(g_{s1}+\dots+g_{si_s}).
\end{split}
\end{equation}
Since $k_i(1\leq i\leq s)$ are distinct nonzero integers, and $g_{tj}(1\leq t\leq s,1\leq j\leq i_t)$
are distinct continuous maps from $\Delta^n$ to $S^1$, then $\overline{f}g_{t1}+\dots+\overline{f}g_{ti_t}=g_{t1}+\dots+g_{ti_t}$.
for $1\leq t\leq s$.
Therefore, $\{\overline{f}g_{t1},\dots,\overline{f}g_{ti_t}\}=\{g_{t1},\dots,g_{ti_t}\}$ for $1\leq t\leq s$.
According to Remark \ref{Rem4.24}, there exists a subset $\{h_{t1},\dots,h_{tj_t}\}$ of $\{g_{t1},\dots,g_{ti_t}\}$
where $i_t=2j_t$ such that $\{h_{t1},\overline{f}h_{t1},\dots,h_{tj_t},\overline{f}h_{tj_t}\}=\{g_{t1},\dots,g_{ti_t}\}$.
Then $h_{t1}+\overline{f}h_{t1}+\dots+h_{tj_t}+\overline{f}h_{tj_t}=g_{t1}+\dots+g_{ti_t}$. Therefore
\begin{equation}
\begin{split}
\alpha_{i_0}&=k_1(g_{11}+\dots+g_{1i_1})+\dots+k_s(g_{s1}+\dots+g_{si_s})\\
&=k_1(h_{11}+\overline{f}h_{11}+\dots+h_{1j_1}+\overline{f}h_{1j_1})
+\dots+k_s(h_{s1}+\overline{f}h_{s1}+\dots+h_{sj_s}+\overline{f}h_{sj_s})
\end{split}.
\end{equation}
Then we have $L_n(\alpha_{i})_{i\in\Gamma}=\alpha_{i_0}\in B_n$. From the above discussion, we know $(ImL)_n\subset B_n,n\geq0$.

For any continuous map $g$ from $\Delta^n$ to $S^1$, let $\alpha_{i_0}=g+\overline{f}g$.
Then $(\overline{f}_{*})^{m}(\alpha_{i_0})=\alpha_{i_0}$ for any  positive integer $m$.
Thus, there exists $(\alpha_{i})_{i\in\Gamma}\in(\mathop{lim^{\Gamma}}\limits^{\bullet}(T,f))_n$ such that
$L_n((\alpha_i)_{i\in\Gamma})=\alpha_{i_0}=g+\overline{f}g\in(ImL)_n$.
It follows that $\{g+\overline{f}g|g\mathrm{\;is\;the\;continuous\;map
\;from\;}\Delta^n\mathrm{\;to\;}S^1\}\subset(ImL)_n$ and $B_n\subset(ImL)_n,n\geq0$.
Then we have $B_n=(ImL)_{n},n\geq0$.

Particularly, $(ImL)_0$ has basis $\{x+\overline{f}(x)|x\in S^1\}$ and $(ImL)_1$
has basis $\{\sigma+\overline{f}\sigma|\sigma\mathrm{\;is\;the\;path\;of\;}S^1\}$.
Consider the subcomplex $ImL$ of $S(S^1)$.
Since $S^1$ is path connected, it is easy to verify
\begin{equation}
Im\partial_1=\{\sum_{1\leq t\leq n}k_t(x_t+\overline{f}(x_t))|n\in\mathds{N}^{+},x_t\in S^{1},\sum_{1\leq t\leq n_i}k_t=0\}
\end{equation}
Since $(ImL)_{-1}=0$, we have $$H_0(T,f)=H_0(\mathop{lim^{\Gamma}}\limits^{\bullet}(T,f))\simeq H_0(ImL)=\frac{B_0}{Im\partial_1}.$$
Define $\theta:B_0\rightarrow\mathds{Z},\sum_{1\leq t\leq n_i}k_t(x_t+\overline{f}(x_t))\mapsto \sum_{1\leq t\leq n_i}k_t$.
It is easy to verify that $\theta$ is surjective and $Ker\theta=Im\partial_1$.
Then $\overline{\theta}:H_0(ImL)=\frac{B_0}{Im\partial_1}=\frac{B_0}{Ker\theta}\rightarrow\mathds{Z},cls(\sum_{1\leq t\leq n_i}k_t(x_t+\overline{f}(x_t)))\mapsto \sum_{1\leq t\leq n_i}k_t$ is an isomorphism.
Hence $H_0(T,f)\simeq H_0(ImL)=\frac{B_0}{Im\partial_1}=\frac{B_0}{Ker\theta}\simeq\mathds{Z}$.
Let $(\beta_i)_{i\in\Gamma}\in(\mathop{lim^{\Gamma}}\limits^{\bullet}(T,f))_{0}$ with $\beta_{i_0}=x_0+\overline{f}(x_0),x_0\in S^1$.
Since $H_0(L)(cls(\beta_i)_{i\in\Gamma})=cls(x_0+\overline{f}(x_0)),\theta(cls(x_0+\overline{f}(x_0)))=1$,
$cls((\beta_i)_{i\in\Gamma})$ is a generator of $H_0(T,f)$.

We have a short exact sequence:
$$\xymatrix{&0\ar[r]&\mathop{lim^{\Gamma}}\limits^{\bullet}(T,f)\ar[r]^{L}&S(S^1)\ar[r]^{\pi}&\frac{S(S^1)}{ImL}\ar[r]&0
}$$

Since $(\mathop{lim^{\Gamma}}\limits^{\bullet}(T,f))_{-1}=S_{-1}(S^1)=(\frac{S(S^1)}{ImL})_{-1}=0$, there exists
an exact sequence:
$$\xymatrix{&H_0(T,f)\ar[r]^{H_0(L)}&H_0(S^1)\ar[r]^{H_0(\pi)}&H_0(\frac{S(S^1)}{ImL})\ar[r]&0
}$$

And $H_0(T,f)\simeq H_0(S^1)\simeq \mathds{Z}$,$cls((\beta_i)_{i\in\Gamma})$ is a generator
of $H_0(T,f)$, for any $x,y\in S^{1}$, $cls(x)=cls(y)$ is a generator of $H_0(S^1)$.
$H_0(L)(cls((\beta_i)_{i\in\Gamma}))=cls(L((\beta_i)_{i\in\Gamma}))=
cls(\beta_{i_0})=cls(x_0+\overline{f}(x_0))=cls(x_0)+cls(\overline{f}(x_0))=2cls(x_0)$.
Thus, $ImH_0(L)=2\mathds{Z}cls(x_0)$ and $H_0(S^1)=\mathds{Z}cls(x_0)$
Then, $H_0(\frac{S(S^1)}{ImL})\simeq
\frac{H_0(S^1)}{KerH_0(\pi)}=\frac{H_0(S^1)}{ImH_0(L)}\simeq \frac{\mathds{Z}}{2\mathds{Z}}=\mathds{Z}_2$.

\end{proof}

\section{Homotopy equivalence in top-representations}

\begin{Definition}\label{Def4.26}
Let $\Gamma$ be a quiver, $(\mathop{A}\limits^{\bullet},f),(\mathop{B}\limits^{\bullet},g)\in \textbf{C}\mathrm{-}\textbf{Rep}\Gamma$ and $\alpha,\beta:(\mathop{A}\limits^{\bullet},f)\rightarrow (\mathop{B}\limits^{\bullet},g)$. We say $\alpha$ is $\textbf{homotopic}$ to $\beta$, denoted as $\alpha\simeq^t\beta$, if there exists a homotopy $F_i:\alpha_i\simeq^t\beta_i$ for each $i\in\Gamma$, and the following diagram commutes for all $n\in\mathds{Z}$ if $f_{ij}$ exists:

$$\xymatrix{
&A(i)_{n-1}\ar[r]^{(f_{ij})_{n-1}}\ar[d]_{(F_i)_{n-1}}&A(j)_{n-1}\ar[d]^{(F_j)_{n-1}}\\
&B(i)_{n}\ar[r]^{(g_{ij})_n}&B(j)_n
}.$$
\end{Definition}
We also call $F:\alpha\simeq^t\beta$ a homotopy. It is easy to show that such $``\simeq^t"$ is an equivalence in $Hom_{\textbf{C}\mathrm{-}\textbf{Rep}\Gamma}((\mathop{A}\limits^{\bullet},f),(\mathop{B}\limits^{\bullet},g))$. And for any $(\mathop{A}\limits^{\bullet},f),(\mathop{B}\limits^{\bullet},g),(\mathop{C}\limits^{\bullet},h)\in\textbf{C}\mathrm{-}\textbf{Rep}\Gamma$, $$\gamma\alpha\simeq^t\delta\beta:(\mathop{A}\limits^{\bullet},f)\rightarrow(\mathop{C}\limits^{\bullet},h)$$
if $\gamma\simeq^t\delta:(\mathop{B}\limits^{\bullet},g)\rightarrow(\mathop{C}\limits^{\bullet},h)$ and $\alpha\simeq^t\beta:(\mathop{A}\limits^{\bullet},f)\rightarrow(\mathop{B}\limits^{\bullet},g)$.

Define $K((\mathop{A}\limits^{\bullet},f),(\mathop{B}\limits^{\bullet},g))=$$\{\alpha\in Hom_{\textbf{C}\mathrm{-}\textbf{Rep}\Gamma}((\mathop{A}\limits^{\bullet},f),(\mathop{B}\limits^{\bullet},g))|\alpha\simeq^t0\}$. Then $K((\mathop{A}\limits^{\bullet},f),(\mathop{B}\limits^{\bullet},g))$ is a subgroup of $Hom_{\textbf{C}\mathrm{-}\textbf{Rep}\Gamma}((\mathop{A}\limits^{\bullet},f),(\mathop{B}\limits^{\bullet},g))$. Define the category
$\textbf{K}\mathrm{-}\textbf{Rep}\Gamma$ consisting of the same objects as $\textbf{C}\mathrm{-}\textbf{Rep}\Gamma$ with

\begin{equation}
Hom_{\textbf{K}\mathrm{-}\textbf{Rep}\Gamma}((\mathop{A}\limits^{\bullet},f),(\mathop{B}\limits^{\bullet},g))=
Hom_{\textbf{C}\mathrm{-}\textbf{Rep}\Gamma}((\mathop{A}\limits^{\bullet},f),(\mathop{B}\limits^{\bullet},g))/K((\mathop{A}\limits^{\bullet},f),(\mathop{B}\limits^{\bullet},g)).
\end{equation}
\begin{Lemma}\label{Lem4.27}
The functor $\mathop{lim^{\Gamma}}\limits^{\bullet}:\textbf{C}\mathrm{-}\textbf{Rep}\Gamma\rightarrow\textbf{C}$ induces a natural
functor $\mathop{lim^{\Gamma}}\limits^{\bullet}:\textbf{K}\mathrm{-}\textbf{Rep}\Gamma\rightarrow\textbf{K}$.
\end{Lemma}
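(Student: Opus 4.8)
The plan is to verify that the functor $\mathop{lim^{\Gamma}}\limits^{\bullet}:\textbf{C}\mathrm{-}\textbf{Rep}\Gamma\rightarrow\textbf{C}$ is additive and carries each subgroup $K((\mathop{A}\limits^{\bullet},f),(\mathop{B}\limits^{\bullet},g))$ into the subgroup of null-homotopic chain maps from $\mathop{lim^{\Gamma}}\limits^{\bullet}(\mathop{A}\limits^{\bullet},f)$ to $\mathop{lim^{\Gamma}}\limits^{\bullet}(\mathop{B}\limits^{\bullet},g)$; here $\textbf{K}$ is the homotopy category of chain complexes of abelian groups, i.e. $\textbf{C}$ with the null-homotopic chain maps divided out, exactly as $\textbf{K}\mathrm{-}\textbf{Rep}\Gamma$ is formed from $\textbf{C}\mathrm{-}\textbf{Rep}\Gamma$. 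Once this is done, the formula $[\alpha]\mapsto[\mathop{lim^{\Gamma}}\limits^{\bullet}(\alpha)]$ is well defined on $Hom_{\textbf{K}\mathrm{-}\textbf{Rep}\Gamma}((\mathop{A}\limits^{\bullet},f),(\mathop{B}\limits^{\bullet},g))$ and, using the functoriality of $\mathop{lim^{\Gamma}}\limits^{\bullet}$ on $\textbf{C}\mathrm{-}\textbf{Rep}\Gamma$, assembles into the asserted functor $\textbf{K}\mathrm{-}\textbf{Rep}\Gamma\rightarrow\textbf{K}$. First I would recall that $\mathop{lim^{\Gamma}}\limits^{\bullet}(\mathop{A}\limits^{\bullet},f)$ is by definition a subcomplex of the product $\prod_{i\in\Gamma}\mathop{A}\limits^{\bullet}(i)$, that $\mathop{lim^{\Gamma}}\limits^{\bullet}(\alpha)$ is the restriction of $\prod_{i\in\Gamma}\alpha_i$, hence sends $(a_i)_{i\in\Gamma}$ to $(\alpha_i(a_i))_{i\in\Gamma}$, and that differentials, sums of morphisms, and identities are all computed coordinatewise; additivity of $\mathop{lim^{\Gamma}}\limits^{\bullet}$ is immediate from this.

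The heart of the argument is to manufacture, from a homotopy $F=(F_i)_{i\in\Gamma}$ witnessing $\alpha\simeq^t 0$ in $\textbf{C}\mathrm{-}\textbf{Rep}\Gamma$, a chain homotopy $\widetilde F$ on $\mathop{lim^{\Gamma}}\limits^{\bullet}$. For each $n$ I would define $\widetilde F_{n-1}: \mathop{lim^{\Gamma}}\limits^{\bullet}(\mathop{A}\limits^{\bullet},f)_{n-1}\rightarrow \mathop{lim^{\Gamma}}\limits^{\bullet}(\mathop{B}\limits^{\bullet},g)_{n}$ by $(a_i)_{i\in\Gamma}\mapsto((F_i)_{n-1}(a_i))_{i\in\Gamma}$. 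The only place where the hypothesis is actually used is the well-definedness of $\widetilde F$: for $(a_i)_{i\in\Gamma}$ in the $\Gamma$-limit we have $(f_{ij})_{n-1}(a_i)=a_j$ for every arrow $i\rightarrow j$, and the commuting square of Definition \ref{Def4.26} gives $(g_{ij})_n(F_i)_{n-1}(a_i)=(F_j)_{n-1}(f_{ij})_{n-1}(a_i)=(F_j)_{n-1}(a_j)$, so the tuple $((F_i)_{n-1}(a_i))_{i\in\Gamma}$ satisfies the compatibility defining $\mathop{lim^{\Gamma}}\limits^{\bullet}(\mathop{B}\limits^{\bullet},g)_{n}$. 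It then remains to check the chain-homotopy identity $\partial\widetilde F+\widetilde F\partial=\mathop{lim^{\Gamma}}\limits^{\bullet}(\alpha)$, but since everything is coordinatewise this reduces on the $i$-th coordinate to $\partial^{\mathop{B}\limits^{\bullet}(i)}F_i+F_i\partial^{\mathop{A}\limits^{\bullet}(i)}=\alpha_i$, which holds because $F_i:\alpha_i\simeq^t0$. Hence $\mathop{lim^{\Gamma}}\limits^{\bullet}(\alpha)$ is null-homotopic, as required, and the same computation (applied to $\alpha-\beta$) shows $\mathop{lim^{\Gamma}}\limits^{\bullet}$ sends $\alpha\simeq^t\beta$ to $\mathop{lim^{\Gamma}}\limits^{\bullet}(\alpha)\simeq^t\mathop{lim^{\Gamma}}\limits^{\bullet}(\beta)$.

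Granting the above, $\mathop{lim^{\Gamma}}\limits^{\bullet}$ descends to a functor $\textbf{K}\mathrm{-}\textbf{Rep}\Gamma\rightarrow\textbf{K}$ that on objects agrees with $\mathop{lim^{\Gamma}}\limits^{\bullet}$ and on morphisms is $[\alpha]\mapsto[\mathop{lim^{\Gamma}}\limits^{\bullet}(\alpha)]$; it preserves identities and composition because $\mathop{lim^{\Gamma}}\limits^{\bullet}$ already does so on $\textbf{C}\mathrm{-}\textbf{Rep}\Gamma$, and it is compatible with the canonical quotient functors $\textbf{C}\mathrm{-}\textbf{Rep}\Gamma\rightarrow\textbf{K}\mathrm{-}\textbf{Rep}\Gamma$ and $\textbf{C}\rightarrow\textbf{K}$, which is the sense in which it is natural. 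I do not anticipate a real obstacle: the single content-bearing step is showing that $\widetilde F$ takes values in the $\Gamma$-limit, and this is forced by the compatibility square already built into the definition of $\simeq^t$ on $\textbf{C}\mathrm{-}\textbf{Rep}\Gamma$; everything else is coordinatewise bookkeeping. The one point to state carefully is the degree shift in Definition \ref{Def4.26} — $F_i$ sends degree $n-1$ to degree $n$, i.e. it is a degree $+1$ map — so that the signs and indices in the identity $\partial\widetilde F+\widetilde F\partial=\mathop{lim^{\Gamma}}\limits^{\bullet}(\alpha)$ line up with the usual convention for a chain homotopy.
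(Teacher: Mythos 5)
Your proposal is correct and follows essentially the same route as the paper: the single substantive step in both is that the coordinatewise homotopy $\bigl((F_i)_{n-1}(a_i)\bigr)_{i\in\Gamma}$ lands in the subcomplex $\mathop{lim^{\Gamma}}\limits^{\bullet}(\mathop{B}\limits^{\bullet},g)$ of the product, which is forced by the compatibility square $(g_{ij})_{n}(F_i)_{n-1}=(F_j)_{n-1}(f_{ij})_{n-1}$ from Definition \ref{Def4.26}. The paper argues directly with $\alpha\simeq^t\beta$ rather than reducing to $\alpha-\beta\simeq^t 0$, but this is only a cosmetic difference.
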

\begin{proof}
It suffices to show that for any $\alpha\simeq^t\beta:(\mathop{A}\limits^{\bullet},f)\rightarrow(\mathop{B}\limits^{\bullet},g)$ where $(\mathop{A}\limits^{\bullet},f),(\mathop{B}\limits^{\bullet},g)\in\textbf{C}\mathrm{-}\textbf{Rep}\Gamma$, we have
$\mathop{lim^{\Gamma}}\limits^{\bullet}(\alpha)\simeq^t\mathop{lim^{\Gamma}}\limits^{\bullet}(\beta):\mathop{lim^{\Gamma}}\limits^{\bullet}(\mathop{A}\limits^{\bullet},f)\rightarrow\mathop{lim^{\Gamma}}\limits^{\bullet}(\mathop{B}\limits^{\bullet},g)$.
Assume $F:\alpha\simeq^t\beta$. Then for all $i\in\Gamma$, $F_i:\alpha_i\simeq^t\beta_i$, and we have the commutative diagram in Definition \ref{Def4.26}.

Furthermore, we have $\prod_{i\in\Gamma}F_i:\prod_{i\in\Gamma}\alpha_i\simeq^t\prod_{i\in\Gamma}\beta_i:\prod_{i\in\Gamma}\mathop{A}\limits^{\bullet}(i)\rightarrow\prod_{i\in\Gamma}\mathop{B}\limits^{\bullet}(i)$.
Look at the following diagram:
$$\xymatrix{
&\ar[r]&\prod_{i\in\Gamma}A(i)_{n+1}\ar[r]\ar[d]&\prod_{i\in\Gamma}A(i)_n\ar[r]\ar[d]\ar[dl]^{\prod_{i\in\Gamma}(F_i)_n}
&\prod_{i\in\Gamma}A(i)_{n-1}\ar[r]\ar[d]\ar[dl]^{\prod_{i\in\Gamma}(F_i)_{n-1}}&\\
&\ar[r]&\prod_{i\in\Gamma}B(i)_{n+1}\ar[r]&\prod_{i\in\Gamma}B(i)_n\ar[r]&\prod_{i\in\Gamma}B(i)_{n-1}\ar[r]&
}$$
We know that $\mathop{lim^{\Gamma}}\limits^{\bullet}(\mathop{A}\limits^{\bullet},f)$ and $\mathop{lim^{\Gamma}}\limits^{\bullet}(\mathop{B}\limits^{\bullet},g)$ are subcomplexes of $\prod_{i\in\Gamma}\mathop{A}\limits^{\bullet}(i)$  and
$\prod_{i\in\Gamma}\mathop{B}\limits^{\bullet}(i)$ respectively. Thus $\mathop{lim^{\Gamma}}\limits^{\bullet}(\alpha)\simeq^t\mathop{lim^{\Gamma}}\limits^{\bullet}(\beta)$ holds if
$\prod_{i\in\Gamma}(F_i)_n((\mathop{lim^{\Gamma}}\limits^{\bullet}(\mathop{A}\limits^{\bullet},f))_n)\subset(\mathop{lim^{\Gamma}}\limits^{\bullet}(\mathop{B}\limits^{\bullet},g))_{n+1}$.
For all $\prod_{i\in\Gamma}(a_i)\in(\mathop{lim^{\Gamma}}\limits^{\bullet}(\mathop{A}\limits^{\bullet},f))_n$, $\prod_{i\in\Gamma}(F_i)_n(\prod_{i\in\Gamma}(a_i))=
\prod_{i\in\Gamma}(F_i)_n(a_i)$, if $g_{ij}$ exists, according to the communnative diagram in Definition \ref{Def4.26}, we have
\begin{equation}
(g_{ij})_{n+1}((F_i)_n(a_i))=(F_j)_n((f_{ij})_n(a_i))=(F_j)_n(a_j).
\end{equation}
This completes the proof.
\end{proof}
\begin{Definition}\label{Def4.28}
Let $\Gamma$ be a quiver, $(T,f),(T^{'},f^{'})\in\textbf{Top}\mathrm{-}\textbf{Rep}\Gamma$ and $\mu,\nu:(T,f)\rightarrow(T^{'},f^{'})$.
We say $\mu$ is $\textbf{homotopic}$ to $\nu$, denoted as $\mu\simeq^t\nu$, if there exists a homotomy $H_i:\mu_i\simeq^t\nu_i$ for each $i\in\Gamma$, and the following diagram commutes if $f_{ij}$ exists:
$$\xymatrix{
&T(i)\times I\ar[r]^{H_i}\ar[d]_{f_{ij}\times1}&T^{'}(i)\ar[d]^{f_{ij}^{'}}\\
&T(j)\times I\ar[r]^{H_j}&T^{'}(j)
}$$.
\end{Definition}
Meantime, we also call $H:\mu\simeq^t\nu$ a homotopy.

Remark that $H:\mu\simeq^t\nu$  in the category $\textbf{Top}\mathrm{-}\textbf{Rep}\Gamma$ has the same properties as the above
$F:\mu\simeq^t\nu$ in the category $\textbf{C}\mathrm{-}\textbf{Rep}\Gamma$.

\begin{Remark}(\cite{AT})\label{Rem4.29}
We recall some well-known conclusions in algebraic topology. Let $X$ be any topological space, and define $\lambda_{i}^{X}
:X\rightarrow X\times I, x\mapsto(x,i)$. Then there exist $P_n^{X}:S_n(X)\rightarrow S_{n+1}(X\times I)$ such that $P^{X}:
S(\lambda_0^{X})\simeq^t S(\lambda_1^{X}):S(X)\rightarrow S(X\times I)$.
For any continuous map $f:X\rightarrow Y$, there is a commutative diagram:
$$\xymatrix{
&S_n(X)\ar[r]^{P_n^X}\ar[d]_{S_n(f)}&S_{n+1}(X\times I)\ar[d]^{S_{n+1}(f\times1)}\\
&S_n(Y)\ar[r]^{P_n^Y}&S_{n+1}(Y\times I)
}$$.
\end{Remark}

From Definition \ref{Def4.28}, we claim that $H^{\Gamma}_n(\mu)=H^{\Gamma}_n(\nu)$ for all $n\geq0$ if $\mu\simeq^{t}\nu:(T,f)\rightarrow(T^{'},f^{'})$. First, let us look at the parallel result for
topological spaces.

Comparing with the fact that $H_n(f)=H_n(g)$ for all $n\geq0$ if $f,g:X\rightarrow Y$ are homotopic  (\cite{AT}, p75),
we have the following theorem.

\begin{Theorem}\label{The5.5}({\bf Homotopy Axiom})
Let $(T,f),(T^{'},f^{'})\in\textbf{Top}\mathrm{-}\textbf{Rep}\Gamma$, and $\mu\simeq^{t}\nu:(T,f)\rightarrow(T^{'},f^{'})$. Then
$S^{\Gamma}(\mu)\simeq^{t} S^{\Gamma}(\nu):S^{\Gamma}(T,f)\rightarrow S^{\Gamma}(T^{'},f^{'})$, and therefore $H_n(\mu)=H_n(\nu)$ for all $n\geq0$.

\end{Theorem}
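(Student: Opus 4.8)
The plan is to exhibit an explicit homotopy $F=(F_i)_{i\in\Gamma}$, in the sense of Definition~\ref{Def4.26}, realizing $S^{\Gamma}(\mu)\simeq^{t}S^{\Gamma}(\nu)$ in $\textbf{C}\mathrm{-}\textbf{Rep}\Gamma$, and then to transport this through the functor $\mathop{lim^{\Gamma}}\limits^{\bullet}$ by Lemma~\ref{Lem4.27}. The ingredients are the prism operators $P^{X}$ of Remark~\ref{Rem4.29} together with the commuting square built into the definition of a homotopy of top-representations (Definition~\ref{Def4.28}).

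First I would fix a homotopy $H:\mu\simeq^{t}\nu$, that is, for each $i\in\Gamma$ a continuous map $H_i:T(i)\times I\to T'(i)$ with $H_i\circ\lambda_0^{T(i)}=\mu_i$, $H_i\circ\lambda_1^{T(i)}=\nu_i$, and $f'_{ij}\circ H_i=H_j\circ(f_{ij}\times 1)$ for every arrow $i\to j$. For $i\in\Gamma$ and $n\ge 0$ put
\[
(F_i)_n=S_{n+1}(H_i)\circ P_n^{T(i)}:S_n(T(i))\longrightarrow S_{n+1}(T'(i)).
\]
Since $S(H_i)$ is a chain map and $P^{T(i)}:S(\lambda_0^{T(i)})\simeq^{t}S(\lambda_1^{T(i)})$ by Remark~\ref{Rem4.29}, the identity $\partial F_i+F_i\partial=S(H_i)\bigl(\partial P^{T(i)}+P^{T(i)}\partial\bigr)=S(H_i)\bigl(S(\lambda_1^{T(i)})-S(\lambda_0^{T(i)})\bigr)=S(\nu_i)-S(\mu_i)$ shows that $F_i$ is a chain homotopy from $S(\mu_i)$ to $S(\nu_i)$ (with $(F_i)_{-1}=0$ handling degree $0$); this is exactly the classical computation reproduced in \cite{AT}, p.~75.

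Next I would check the compatibility square of Definition~\ref{Def4.26}, namely that for every arrow $i\to j$ and every $n$ one has $S_n(f'_{ij})\circ(F_i)_{n-1}=(F_j)_{n-1}\circ S_{n-1}(f_{ij})$ as maps $S_{n-1}(T(i))\to S_n(T'(j))$. Expanding the right-hand side and using the naturality square of Remark~\ref{Rem4.29} applied to $f_{ij}:T(i)\to T(j)$, which gives $P_{n-1}^{T(j)}\circ S_{n-1}(f_{ij})=S_n(f_{ij}\times 1)\circ P_{n-1}^{T(i)}$, turns the right-hand side into $S_n\bigl(H_j\circ(f_{ij}\times 1)\bigr)\circ P_{n-1}^{T(i)}$; then the commuting square $H_j\circ(f_{ij}\times 1)=f'_{ij}\circ H_i$ of Definition~\ref{Def4.28} rewrites this as $S_n(f'_{ij}\circ H_i)\circ P_{n-1}^{T(i)}=S_n(f'_{ij})\circ(F_i)_{n-1}$, which is the left-hand side. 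Hence $F=(F_i)_{i\in\Gamma}$ is a homotopy $S^{\Gamma}(\mu)\simeq^{t}S^{\Gamma}(\nu)$.

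Finally, since $H_n(\mu)=H_n\bigl(\mathop{lim^{\Gamma}}\limits^{\bullet}(\mu)\bigr)=H_n\bigl(\mathop{lim^{\Gamma}}\limits^{\bullet}(S^{\Gamma}(\mu))\bigr)$ by Definition~\ref{Def4.9} and the definition of $\mathop{lim^{\Gamma}}\limits^{\bullet}$ on top-representations, Lemma~\ref{Lem4.27} yields $\mathop{lim^{\Gamma}}\limits^{\bullet}(S^{\Gamma}(\mu))\simeq^{t}\mathop{lim^{\Gamma}}\limits^{\bullet}(S^{\Gamma}(\nu))$ as ordinary chain maps of abelian groups, and homotopic chain maps induce the same homomorphism on homology; therefore $H_n(\mu)=H_n(\nu)$ for all $n\ge 0$. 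I expect the only genuinely delicate point to be the index bookkeeping in the compatibility square, because $F$ raises degree by one and this shift is built into Definition~\ref{Def4.26}; the chain-homotopy identity for each $F_i$ and the passage through $\mathop{lim^{\Gamma}}\limits^{\bullet}$ are then formal consequences of the properties of the prism operator and of Lemma~\ref{Lem4.27} respectively.
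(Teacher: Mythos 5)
Your proposal is correct and follows essentially the same route as the paper's proof: compose the prism operator $P^{T(i)}$ with $S(H_i)$ to get the chain homotopies, use the naturality square of Remark \ref{Rem4.29} together with the compatibility square of Definition \ref{Def4.28} to verify the condition of Definition \ref{Def4.26}, and then pass through $\mathop{lim^{\Gamma}}\limits^{\bullet}$ via Lemma \ref{Lem4.27}. Your write-up is in fact slightly more explicit than the paper's about the chain-homotopy identity and the degree bookkeeping, but the argument is the same.
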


\begin{proof}
Assume $H:\mu\simeq^{t}\nu$. Then $H_i:\mu_i\simeq^{t}\nu_i$, that is, $H_i:T(i)\times I\rightarrow T^{'}(i)$ and $H_i(-,0)=\mu_i,H_i(-,1)=\nu_i$.
According to Remark \ref{Rem4.29}, there exist $P_n^i:S_n(T(i))\rightarrow S_{n+1}(T(i)\times I)$ such that $P^i:S(\lambda^i_0)\simeq^{t} S(\lambda^i_1$),
where $\lambda^i_k:T(i)\rightarrow T(i)\times I, x\mapsto (x,k)$ for $k=0,1$.
Then
\begin{equation}\label{eq9}
S(H_i)P^i:S(H_i)S(\lambda^i_0)=S(\mu_i)\simeq^{t} S(H_i)S(\lambda^i_1)=S(\nu_i).
\end{equation}
We have the following commutative diagram if $f_{ij}$ exists:
\begin{equation}\label{eq10}
\xymatrix{
&S_n(T(i))\ar[r]^{P_n^i}\ar[d]^{S_n(f_{ij})}&S_{n+1}(T(i)\times I)\ar[r]^{S_{n+1}(H_i)}\ar[d]^{S(f_{ij}\times1)}&S_{n+1}(T^{'}(i))\ar[d]^{S_{n+1}(f^{'}_{ij})}\\
&S_n(T(j))\ar[r]^{P^{j}_n}&S_{n+1}(T(j)\times I)\ar[r]^{S_{n+1}(H_j)}&S_{n+1}(T^{'}(j))
}
\end{equation}
The commutativity of the left square is based on Remark \ref{Rem4.29}, and the commutativity of the right square is based Definition \ref{Def4.28}.
Therefore one gets $S^{\Gamma}(\mu)\simeq^{t} S^{\Gamma}(\nu):S^{\Gamma}(T,f)\rightarrow S^{\Gamma}(T^{'},f^{'})$ based on equation (\ref{eq9}) and commutative diagram (\ref{eq10}). According to Lemma \ref{Lem4.27}, $\mathop{lim^{\Gamma}}\limits^{\bullet}(S^{\Gamma}(\mu))\simeq^{t} \mathop{lim^{\Gamma}}\limits^{\bullet}(S^{\Gamma}(\nu)):\mathop{lim^{\Gamma}}\limits^{\bullet}(S^{\Gamma}(T,f))\rightarrow \mathop{lim^{\Gamma}}\limits^{\bullet}(S^{\Gamma}(T^{'},f^{'}))$.
Thus for all $n\geq0$, we have: $$H_n(\mu)=H_n(\mathop{lim^{\Gamma}}\limits^{\bullet}(S^{\Gamma}(\mu)))=H_n(\mathop{lim^{\Gamma}}\limits^{\bullet}(S^{\Gamma}(\nu)))=H_n(\nu).$$ .
\end{proof}

\begin{Definition}
Let $\Gamma$ be a quiver, and $(T,f),(T^{'},f^{'})\in\textbf{Top}\mathrm{-}\textbf{Rep}\Gamma$. We say $(T,f)$ and $(T^{'},f^{'})$
have the $\textbf{same homotopy type}$
, if there exists $\mu:(T,f)\rightarrow(T^{'},f{'})$ and $\nu:(T^{'},f^{'})\rightarrow(T,f)$ such that
$\mu\nu\simeq^t Id_{(T^{'},f^{'})}$ and $\nu\mu\simeq^t Id_{(T,f)}$.
\end{Definition}

For any two topological spaces $X,Y$, we already know their homotopy groups are isomorphic if they have the same homotopy type. Then it is  natural to consider the parallel problem for two top-representations having the same
homotopy type.
\begin{Corollary}
 (\cite{AT}, p79) If $X$ and $Y$ have the same homotopy type, then $H_n(X)\simeq H_n(Y)$ for all $n\geq0$, where the isomorphism is induced by any homotopy equivalence.
\end{Corollary}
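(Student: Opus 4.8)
The plan is to deduce this from the functoriality of $H_n$ together with the homotopy invariance of singular homology, paralleling the proof of Theorem \ref{The5.5} but now in the elementary setting of plain topological spaces. By hypothesis there are continuous maps $f:X\rightarrow Y$ and $g:Y\rightarrow X$ with $gf\simeq^t\mathrm{id}_X$ and $fg\simeq^t\mathrm{id}_Y$. Fix $n\geq 0$ throughout.

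First I would recall the classical fact (\cite{AT}, p75), already used in Section 5, that homotopic continuous maps induce equal homomorphisms on $H_n$: if $H:X\times I\rightarrow Y$ is a homotopy between $u,v:X\rightarrow Y$, then composing the prism operators $P^X_n$ of Remark \ref{Rem4.29} with $S(H)$ produces a chain homotopy $S(u)\simeq^t S(v)$, whence $H_n(u)=H_n(v)$. Applying this to the two homotopies above, and using that $H_n$ is a functor $\textbf{Top}\rightarrow\textbf{Ab}$ (so it preserves composition and sends identity maps to identity maps), we obtain $H_n(g)H_n(f)=H_n(gf)=H_n(\mathrm{id}_X)=\mathrm{id}_{H_n(X)}$ and likewise $H_n(f)H_n(g)=H_n(fg)=H_n(\mathrm{id}_Y)=\mathrm{id}_{H_n(Y)}$.

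Hence $H_n(f):H_n(X)\rightarrow H_n(Y)$ is an isomorphism with two-sided inverse $H_n(g)$; this is exactly the isomorphism induced by the chosen homotopy equivalence, and since $n\geq 0$ was arbitrary we conclude $H_n(X)\simeq H_n(Y)$ for all $n$. There is essentially no genuine obstacle at this level: all of the real content has been absorbed into the homotopy invariance of $H_n$, i.e.\ the prism-operator construction quoted from \cite{AT}, which is precisely the ingredient that Theorem \ref{The5.5} lifts to top-representations. The only care needed is bookkeeping — applying the two homotopies in the correct order and invoking the functoriality of $H_n$ established in Section 4 — which is routine.
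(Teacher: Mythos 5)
Your argument is correct: it is the standard textbook proof (homotopy invariance of $H_n$ via the prism operators of Remark \ref{Rem4.29}, plus functoriality, giving $H_n(g)H_n(f)=\mathrm{id}_{H_n(X)}$ and $H_n(f)H_n(g)=\mathrm{id}_{H_n(Y)}$). The paper itself gives no proof of this Corollary --- it is simply quoted from \cite{AT} --- so there is nothing to compare beyond noting that your write-up reproduces exactly the classical argument that reference contains.
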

\begin{Corollary}
If $(T,f),(T^{'},f^{'})\in\textbf{Top}\mathrm{-}\textbf{Rep}\Gamma$ have the same homotopy type, then $H_n(T,f)\simeq H_n(T^{'},f^{'})$ for all $n\geq0$.
\end{Corollary}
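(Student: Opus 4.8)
The plan is to deduce the statement formally from two facts already available: first, that for each $n\geq 0$ the assignment $H_n(-)$ is a functor from $\textbf{Top}\mathrm{-}\textbf{Rep}\Gamma$ to $\textbf{Ab}$ (the corollary following Definition \ref{Def4.9}); and second, the Homotopy Axiom, Theorem \ref{The5.5}, which asserts that $\mu\simeq^{t}\nu$ implies $H_n(\mu)=H_n(\nu)$ for all $n\geq 0$. No direct manipulation of simplices or of $\mathop{lim^{\Gamma}}\limits^{\bullet}$ is needed once these are in hand.

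Concretely, suppose $(T,f)$ and $(T^{'},f^{'})$ have the same homotopy type, so there are morphisms $\mu:(T,f)\rightarrow(T^{'},f^{'})$ and $\nu:(T^{'},f^{'})\rightarrow(T,f)$ in $\textbf{Top}\mathrm{-}\textbf{Rep}\Gamma$ with $\mu\nu\simeq^{t}Id_{(T^{'},f^{'})}$ and $\nu\mu\simeq^{t}Id_{(T,f)}$. Fix $n\geq 0$ and apply the functor $H_n(-)$. Functoriality gives $H_n(\mu\nu)=H_n(\mu)H_n(\nu)$ and $H_n(\nu\mu)=H_n(\nu)H_n(\mu)$, and it sends identity morphisms to identity maps. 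By Theorem \ref{The5.5}, $H_n(\mu\nu)=H_n(Id_{(T^{'},f^{'})})=Id_{H_n(T^{'},f^{'})}$ and $H_n(\nu\mu)=H_n(Id_{(T,f)})=Id_{H_n(T,f)}$. Hence $H_n(\mu):H_n(T,f)\rightarrow H_n(T^{'},f^{'})$ and $H_n(\nu):H_n(T^{'},f^{'})\rightarrow H_n(T,f)$ are mutually inverse group homomorphisms, so $H_n(\mu)$ is an isomorphism, which is the claim; note that the isomorphism is induced by any homotopy equivalence $\mu$, parallel to the classical statement for spaces quoted just above.

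The argument is essentially bookkeeping, so the only points that require care lie in the two ingredients being correctly set up rather than in this final step. One should make sure the homotopy relation $\simeq^{t}$ in $\textbf{Top}\mathrm{-}\textbf{Rep}\Gamma$ is compatible with composition of morphisms (that $\gamma\simeq^{t}\delta$ and $\alpha\simeq^{t}\beta$ give $\gamma\alpha\simeq^{t}\delta\beta$), which is the remark recorded after Definition \ref{Def4.28} that $H:\mu\simeq^{t}\nu$ has the same formal properties as $F:\mu\simeq^{t}\nu$ in $\textbf{C}\mathrm{-}\textbf{Rep}\Gamma$; in fact for the present proof one only needs the two specific homotopies $\mu\nu\simeq^{t}Id$ and $\nu\mu\simeq^{t}Id$ together with Theorem \ref{The5.5}, so even this compatibility is not strictly invoked. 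Thus I expect no genuine obstacle: all the work has been done in Theorem \ref{The5.5} and in establishing functoriality of $H_n$, and the present corollary is their immediate consequence.
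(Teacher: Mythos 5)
Your argument is correct and is exactly the route the paper intends: the corollary is stated as an immediate consequence of the Homotopy Axiom (Theorem \ref{The5.5}) together with functoriality of $H_n(-)$, and your bookkeeping with $H_n(\mu)$ and $H_n(\nu)$ being mutually inverse is the standard verification the paper leaves implicit. Nothing further is needed.
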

\begin{Corollary}
Let $(T,f)$ be a top-representation of a quiver $\Gamma$, with all $T(i)$ being convex linear topological spaces and all $f_{ij}$ being affine maps. If there exist $x_i\in T(i)$ for all $i\in\Gamma$ such that $f_{ij}(x_i)=x_j$, then $H_0(T,f)=\mathds{Z}$ and $H_n(T,f)=0$ for all
$n\geq1$.
\end{Corollary}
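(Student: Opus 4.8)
The plan is to show that $(T,f)$ has the same homotopy type as the constant top-representation sitting at the chosen points $x_i$, and then to read off the homology from Example \ref{Ex4.11}.

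First I would build the auxiliary top-representation $(T',f')\in\textbf{Top}\mathrm{-}\textbf{Rep}\Gamma$ by putting $T'(i)=\{x_i\}$, the one-point space, for each $i\in\Gamma_0$, and letting $f'_{ij}\colon\{x_i\}\to\{x_j\}$ be the unique map. The hypothesis $f_{ij}(x_i)=x_j$ is exactly what is needed so that the inclusions $\iota_i\colon\{x_i\}\hookrightarrow T(i)$ assemble into a morphism $\iota\colon(T',f')\to(T,f)$ (the relevant square commutes since both $f_{ij}\iota_i$ and $\iota_j f'_{ij}$ send $x_i$ to $x_j$), while the constant maps $\pi_i\colon T(i)\to\{x_i\}$ trivially assemble into a morphism $\pi\colon(T,f)\to(T',f')$. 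Clearly $\pi\iota=Id_{(T',f')}$.

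Next I would produce a homotopy $\iota\pi\simeq^t Id_{(T,f)}$ in the sense of Definition \ref{Def4.28}. Since each $T(i)$ is a convex subset of a linear topological space, the straight-line map
\[
H_i\colon T(i)\times I\longrightarrow T(i),\qquad H_i(a,t)=(1-t)x_i+ta
\]
is well defined and continuous, with $H_i(\,\cdot\,,0)=\iota_i\pi_i$ and $H_i(\,\cdot\,,1)=Id_{T(i)}$. The compatibility square of Definition \ref{Def4.28} is where affineness of the structure maps enters: $f_{ij}(H_i(a,t))=f_{ij}((1-t)x_i+ta)=(1-t)f_{ij}(x_i)+tf_{ij}(a)=(1-t)x_j+tf_{ij}(a)=H_j(f_{ij}(a),t)=H_j\big((f_{ij}\times 1)(a,t)\big)$. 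Hence $H=\{H_i\}_{i\in\Gamma_0}$ is a homotopy $\iota\pi\simeq^t Id_{(T,f)}$, so $(T,f)$ and $(T',f')$ have the same homotopy type.

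Finally I would invoke the earlier Corollary stating that top-representations of the same homotopy type have isomorphic homology, getting $H_n(T,f)\simeq H_n(T',f')$ for all $n\geq 0$; and since every $T'(i)$ is a single point, Example \ref{Ex4.11} gives $H_0(T',f')=\mathds{Z}$ and $H_n(T',f')=0$ for $n\geq 1$, which is the claim. I expect no serious obstacle here: the only points demanding care are the verification of the square in Definition \ref{Def4.28} (which forces the use of the affineness of the $f_{ij}$, and is the crux of the argument) and the implicit connectedness of $\Gamma$ used through Example \ref{Ex4.11} --- if $\Gamma$ is not assumed connected one runs the same argument on each component and combines via Corollary \ref{Cor4.21}, at the cost of replacing $H_0$ by $\mathds{Z}^{m}$ where $m$ is the number of components.
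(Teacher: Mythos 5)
Your proposal is correct and follows essentially the same route as the paper: both compare $(T,f)$ with the one-point representation $(T',f')$ at the chosen points via the projection/inclusion pair, use the straight-line homotopy $H_i(a,t)=ta+(1-t)x_i$ whose compatibility square is exactly where affineness of the $f_{ij}$ enters, and then conclude from homotopy invariance together with Example \ref{Ex4.11}. Your remark about connectedness is a fair observation, since the paper's own proof also quotes Example \ref{Ex4.11}, which is stated for connected quivers, without comment.
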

\begin{proof}
Let $(T^{'},f^{'})$ be a top-representation of $\Gamma$ with $T^{'}(i)=\{x_i\}$ for all $i\in\Gamma$. Define $\alpha:(T,f)\rightarrow(T^{'},f^{'})$ and $\beta:(T^{'},f^{'})\rightarrow(T,f)$,
 such that $\alpha_{i}:T(i)\rightarrow T^{'}(i)$, $x\mapsto x_i$ and $\beta_{i}:T^{'}(i)\rightarrow T(i)$, $x_i\mapsto x_i$.
 Then we have $\beta\alpha=1_{(T^{'},f^{i})}$
 and define $H_i:T(i)\times I\rightarrow T(i),(x,t)\mapsto tx+(1-t)x_i$. Since $f_{ij}$ are affine maps, we have the following diagram:
 $$\xymatrix{
&T(i)\times I\ar[r]^{H_i}\ar[d]_{f_{ij}\times1}&T(i)\ar[d]^{f_{ij}}\\
&T(j)\times I\ar[r]^{H_j}&T(j).
}$$
Thus $H:\alpha\beta\simeq^t 1_{(T,f)}$ and $H_n(T,f)\simeq H_n(T^{'},f^{'})$ for all $n\geq0$. This completes the proof according to Example
\ref{Ex4.11}.
\end{proof}

Let $(T,f)$ be a top-representation of a quiver $\Gamma$, and $T^{'}(i)$ is a subspace of $T(i)$ for each $i\in\Gamma$ such that $f_{ij}(T^{'}(i))\subset T^{'}(j)$. Then $(T^{'},f)$ is a top-subrepresentation of top-representation $(T,f)$ and $S^{\Gamma}(T^{'},f)\subset S^{\Gamma}(T,f)$.
Thus we can define $H_n((T,f),(T^{'},f))=H_n(\frac{\mathop{lim^{\Gamma}}\limits^{\bullet}(T,f)}{\mathop{lim^{\Gamma}}\limits^{\bullet}(T^{'},f)})$ (referring to the original correspondence in \cite{AL}). For any two top-representations $(T,f),(T^{'},f^{'})$ of the quiver $\Gamma$ and $(T^{''},f),(T^{'''},f^{'})$ being top-subrepresentations of $(T,f),(T^{'},f^{'})$ respectively and $\alpha:((T,f),(T^{''},f))\rightarrow((T^{'},f^{'}),(T^{'''},f^{'}))$ meaning the restriction of $\alpha$ on $(T^{''},f)$ is a morphism from
$(T^{''},f)$ to $(T^{'''},f^{'})$, then $\alpha$ induces morphisms $H_n(\alpha):H_n((T,f),(T^{''},f))\rightarrow H_n((T^{'},f^{'}),(T^{'''},f^{'}))$ for all $n\geq0$.

\begin{Theorem}
Let $\Gamma$ be a quiver, and $(T,f)$ s top-representation of $\Gamma$. And there exist $x_i\in T(i)$ for all $i\in\Gamma$ such that
$f_{ij}(x_i)=x_j$ if $f_{ij}:T(i)\rightarrow T(j)$ exists. Then, $H_n((T,f),(X_0,f))\simeq H_n(T,f)$ for all $n\geq1$ and
there exists a short exact sequence: $0\longrightarrow\mathds{Z}\longrightarrow H_0(T,f)\longrightarrow H_0((T,f),(X_0,f))\longrightarrow0$, where $(X_0,f)$ is a top-subrepresentation of $(T,f)$ with $X_0(i)=\{x_i\}$ for all $i\in\Gamma$.
\end{Theorem}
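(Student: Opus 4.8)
The plan is to exhibit $(X_0,f)$ as a retract of $(T,f)$ inside $\textbf{Top}\mathrm{-}\textbf{Rep}\Gamma$, push this retraction through the functor $\mathop{lim^{\Gamma}}\limits^{\bullet}\circ S^{\Gamma}$ so as to split the short exact sequence of chain complexes defining the relative homology, and then read off the conclusion using the computation of $H_n(X_0,f)$.

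First I would check that the collapsing maps assemble into a morphism $r\colon(T,f)\to(X_0,f)$. For each $i\in\Gamma$ let $r_i\colon T(i)\to X_0(i)=\{x_i\}$ be the (continuous) constant map. For an arrow $\alpha\colon i\to j$, the square required of a morphism in $\textbf{Top}\mathrm{-}\textbf{Rep}\Gamma$ commutes because, for every $x\in T(i)$, one has $f_{ij}(r_i(x))=f_{ij}(x_i)=x_j=r_j(f_{ij}(x))$, where the middle equality is exactly the hypothesis. Writing $\iota\colon(X_0,f)\hookrightarrow(T,f)$ for the inclusion of the top-subrepresentation, we then have $r\circ\iota=\mathrm{Id}_{(X_0,f)}$, so $r$ is a retraction. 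This is the only place where the hypothesis $f_{ij}(x_i)=x_j$ is genuinely used, and it is really the heart of the argument; everything afterwards is formal.

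Applying the functor $S^{\Gamma}$ and then $\mathop{lim^{\Gamma}}\limits^{\bullet}$ (both functorial, by Section 4) produces chain maps $\iota_*\colon\mathop{lim^{\Gamma}}\limits^{\bullet}(X_0,f)\to\mathop{lim^{\Gamma}}\limits^{\bullet}(T,f)$ and $r_*\colon\mathop{lim^{\Gamma}}\limits^{\bullet}(T,f)\to\mathop{lim^{\Gamma}}\limits^{\bullet}(X_0,f)$ with $r_*\iota_*=\mathrm{Id}$, and here $\iota_*$ is precisely the subcomplex inclusion used to define $H_n((T,f),(X_0,f))$. Hence the short exact sequence of chain complexes
$$0\longrightarrow\mathop{lim^{\Gamma}}\limits^{\bullet}(X_0,f)\longrightarrow\mathop{lim^{\Gamma}}\limits^{\bullet}(T,f)\longrightarrow \tfrac{\mathop{lim^{\Gamma}}\limits^{\bullet}(T,f)}{\mathop{lim^{\Gamma}}\limits^{\bullet}(X_0,f)}\longrightarrow0$$
is split by $r_*$, so $\mathop{lim^{\Gamma}}\limits^{\bullet}(T,f)\cong\mathop{lim^{\Gamma}}\limits^{\bullet}(X_0,f)\oplus\tfrac{\mathop{lim^{\Gamma}}\limits^{\bullet}(T,f)}{\mathop{lim^{\Gamma}}\limits^{\bullet}(X_0,f)}$ as chain complexes. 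Passing to homology yields, for every $n\geq0$, a splitting $H_n(T,f)\cong H_n(X_0,f)\oplus H_n((T,f),(X_0,f))$ whose structure maps are $H_n(\iota)$ and $H_n$ of the quotient map. (One could instead invoke the long exact homology sequence of the short exact sequence above together with the fact that $\iota_*$ is a split monomorphism, but splitting before taking homology avoids any bookkeeping with connecting homomorphisms.)

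Finally, each $X_0(i)$ is a one-point space, so Example \ref{Ex4.11} gives $H_n(X_0,f)=0$ for all $n\geq1$ and $H_0(X_0,f)=\mathds{Z}$. Substituting into the splitting: for $n\geq1$ the summand $H_n(X_0,f)$ vanishes, so $H_n((T,f),(X_0,f))\cong H_n(T,f)$; for $n=0$ we obtain $H_0(T,f)\cong\mathds{Z}\oplus H_0((T,f),(X_0,f))$, and inserting the inclusion of, and projection off, the $\mathds{Z}$-summand yields the required short exact sequence $0\to\mathds{Z}\to H_0(T,f)\to H_0((T,f),(X_0,f))\to0$. The one subtlety worth flagging is connectedness: the identification $H_0(X_0,f)=\mathds{Z}$ uses Example \ref{Ex4.11}, so $\Gamma$ is implicitly taken connected; for a quiver with $m$ connected components this summand becomes $\mathds{Z}^m$ (by Corollary \ref{Cor4.21}), the first term of the short exact sequence should then be $\mathds{Z}^m$, while the isomorphisms for $n\geq1$ persist verbatim.
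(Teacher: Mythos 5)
Your argument is correct, but it is not the paper's argument, and the difference is worth recording. The paper never constructs a retraction: it simply applies $H_n(-)$ to the short exact sequence of chain complexes $0\to\mathop{lim^{\Gamma}}\limits^{\bullet}(X_0,f)\to\mathop{lim^{\Gamma}}\limits^{\bullet}(T,f)\to\mathop{lim^{\Gamma}}\limits^{\bullet}(T,f)/\mathop{lim^{\Gamma}}\limits^{\bullet}(X_0,f)\to0$ and reads the conclusions off the long exact homology sequence together with $H_n(X_0,f)=0$ for $n\geq1$ and $H_0(X_0,f)=\mathds{Z}$. Your retraction $r$ with $r\iota=\mathrm{Id}_{(X_0,f)}$ is a genuine addition: it splits that short exact sequence of complexes, hence gives $H_n(T,f)\simeq H_n(X_0,f)\oplus H_n((T,f),(X_0,f))$ in every degree, which is strictly stronger (the $n=0$ sequence comes out split). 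It also quietly supplies a point the paper's route needs but elides: in the correctly oriented homological long exact sequence, the isomorphism at $n=1$ and the injectivity of $\mathds{Z}\simeq H_0(X_0,f)\to H_0(T,f)$ require the connecting map $H_1((T,f),(X_0,f))\to H_0(X_0,f)$ to vanish, and that is exactly what $H_0(r)H_0(\iota)=\mathrm{Id}$ gives (the paper writes the sequence with arrows ascending in degree and never addresses this). So the paper's route buys brevity, while yours buys the splitting and a self-contained justification of the low-degree part. Your closing caveat is also apposite: both arguments invoke Example \ref{Ex4.11}, which assumes $\Gamma$ connected, so the statement as written implicitly takes $\Gamma$ connected, and for $m$ components the first term becomes $\mathds{Z}^m$ via Corollary \ref{Cor4.21}, just as you say.
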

\begin{proof}
We already know that $H_n(X_0,f)=0$ for all $n\geq1$ and $H_0(X_0,f)=\mathds{Z}$. We have a short exact sequence: $0\longrightarrow
\mathop{lim^{\Gamma}}\limits^{\bullet}(X_0,f)\longrightarrow\mathop{lim^{\Gamma}}\limits^{\bullet}(T,f)\longrightarrow\frac{\mathop{lim^{\Gamma}}\limits^{\bullet}(T,f)}{\mathop{lim^{\Gamma}}\limits^{\bullet}(X_0,f)}
\longrightarrow0$. Apply $H_n(-)$ to this short exact sequence, we have a long exact sequence:

$0\longrightarrow H_0(X_0,f)\longrightarrow
H_0(T,f)\longrightarrow H_0((T,f),(X_0,f))\longrightarrow H_1(X_0,f)\longrightarrow H_1(T,f)\longrightarrow H_1((T,f),(X_0,f))\longrightarrow\dots$. \\Since $H_n(X_0,f)=0$ for all $n\geq1$ and $H_0(X_0,f)=\mathds{Z}$, we have $H_n((T,f),(X_0,f))\simeq H_n(T,f)$ for all $n\geq1$ and a short exact sequence: $0\longrightarrow\mathds{Z}\longrightarrow H_0(T,f)\longrightarrow H_0((T,f),(X_0,f))\longrightarrow0$.
\end{proof}
\begin{Remark}(\cite{AT})  Let $X$ be a topological space, and $X=X_1^{\circ}\cup X_2^{\circ}$ where $X_1$ and $X_2$ are subspaces of $X$. Then $H_n(S(X)/(S(X_1)+S(X_2)))=0$ for all $n\geq0$.
\end{Remark}

\begin{Theorem}\label{thm5.12}({\bf Excision theorem})
Let $(T,f)$ be a top-representation of a quiver $\Gamma$ such that $T(i)=(T^{'}(i))^{\circ}\cup(T^{''}(i))^{\circ}$ and $f_{ij}(T^{'}(i))\subset T^{'}(j),f_{ij}(T^{''}(i))\subset T^{''}(j)$. Then for $n=0,1$, it holds that $$H_n(\mathop{lim^{\Gamma}}\limits^{\bullet}(\frac{S^{\Gamma}(T,f)}{S^{\Gamma}(T^{'},f)+S^{\Gamma}(T^{''},f)}))=0.$$
\end{Theorem}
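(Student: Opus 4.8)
The plan is to reduce the statement to the single‑space fact recalled in the Remark just before the theorem. Put $\mathop{D}\limits^{\bullet}:=\frac{S^{\Gamma}(T,f)}{S^{\Gamma}(T^{'},f)+S^{\Gamma}(T^{''},f)}$; this is an object of $\textbf{C}\mathrm{-}\textbf{Rep}\Gamma$ whose value at a vertex $i$ is the chain complex $\mathop{D}\limits^{\bullet}(i)=S(T(i))\big/\big(S(T^{'}(i))+S(T^{''}(i))\big)$, with structure maps $\bar h_{ij}$ induced by $S(f_{ij})$ — these descend to the quotient precisely because $f_{ij}(T^{'}(i))\subset T^{'}(j)$ and $f_{ij}(T^{''}(i))\subset T^{''}(j)$, so that $S(f_{ij})$ carries $S(T^{'}(i))+S(T^{''}(i))$ into $S(T^{'}(j))+S(T^{''}(j))$. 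Since $T(i)=(T^{'}(i))^{\circ}\cup(T^{''}(i))^{\circ}$, the Remark gives $H_{m}(\mathop{D}\limits^{\bullet}(i))=0$ for all $m\geq0$ and all $i$. Moreover $(T^{'}(i))^{\circ}\cup(T^{''}(i))^{\circ}\subseteq T^{'}(i)\cup T^{''}(i)\subseteq T(i)$ forces $T(i)=T^{'}(i)\cup T^{''}(i)$, hence $S_{0}(T(i))=S_{0}(T^{'}(i))+S_{0}(T^{''}(i))$ and $\mathop{D}\limits^{\bullet}(i)_{0}=0$; together with $\mathop{D}\limits^{\bullet}(i)_{-1}=0$ this means each $\mathop{D}\limits^{\bullet}(i)$ is the exact complex $\cdots\to D(i)_{2}\xrightarrow{\partial_{2}}D(i)_{1}\to0$, so in particular $\partial_{2}\colon D(i)_{2}\to D(i)_{1}$ is surjective.

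Next I pass to $\Gamma$‑limits. By definition $\mathop{lim^{\Gamma}}\limits^{\bullet}(\mathop{D}\limits^{\bullet})$ is a subcomplex of $\prod_{i\in\Gamma}\mathop{D}\limits^{\bullet}(i)$ with $\big(\mathop{lim^{\Gamma}}\limits^{\bullet}(\mathop{D}\limits^{\bullet})\big)_{n}=lim^{\Gamma}(D^{n},\bar h^{n})$. Because $\mathop{D}\limits^{\bullet}(i)_{0}=0=\mathop{D}\limits^{\bullet}(i)_{-1}$ for every $i$, the complex $\mathop{lim^{\Gamma}}\limits^{\bullet}(\mathop{D}\limits^{\bullet})$ vanishes in degrees $-1$ and $0$; hence $H_{0}\big(\mathop{lim^{\Gamma}}\limits^{\bullet}(\mathop{D}\limits^{\bullet})\big)=0$ immediately, and every element of $\big(\mathop{lim^{\Gamma}}\limits^{\bullet}(\mathop{D}\limits^{\bullet})\big)_{1}$ is a $1$‑cycle. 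Thus $H_{1}\big(\mathop{lim^{\Gamma}}\limits^{\bullet}(\mathop{D}\limits^{\bullet})\big)=\big(\mathop{lim^{\Gamma}}\limits^{\bullet}(\mathop{D}\limits^{\bullet})\big)_{1}\big/\,\partial_{2}\big(\big(\mathop{lim^{\Gamma}}\limits^{\bullet}(\mathop{D}\limits^{\bullet})\big)_{2}\big)$, and the $n=1$ part of the theorem is exactly the assertion that $\partial_{2}$ is onto after applying $\mathop{lim^{\Gamma}}\limits^{\bullet}$. This does \emph{not} follow formally from vertexwise surjectivity of $\partial_{2}$, since $\mathop{lim^{\Gamma}}\limits^{\bullet}$ is only left exact; a genuine argument is needed.

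For that I would invoke barycentric subdivision, mirroring the classical proof of excision. Recall the natural chain map $\mathrm{Sd}\colon S(X)\to S(X)$ and the natural chain homotopy $T^{X}\colon S_{n}(X)\to S_{n+1}(X)$ with $\partial T^{X}+T^{X}\partial=\mathrm{id}-\mathrm{Sd}$, and the classical fact that for every finite chain $c$ in $X$ and every open cover $\{U,V\}$ of $X$ one has $\mathrm{Sd}^{k}c\in S(U)+S(V)$ for all large $k$. Taking $\{U,V\}=\{(T^{'}(i))^{\circ},(T^{''}(i))^{\circ}\}$, the operators $\mathrm{Sd}$ and $T^{T(i)}$ descend to $\mathop{D}\limits^{\bullet}(i)$, and by naturality of $\mathrm{Sd}$ and $T$ they commute with every $\bar h_{ij}$; hence they assemble into an endomorphism $\widetilde{\mathrm{Sd}}$ and a homotopy $\widetilde T$ of $\mathop{D}\limits^{\bullet}$ in $\textbf{C}\mathrm{-}\textbf{Rep}\Gamma$ with $\partial\widetilde T+\widetilde T\partial=\mathrm{id}-\widetilde{\mathrm{Sd}}$ holding vertexwise. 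Applying the functor $\mathop{lim^{\Gamma}}\limits^{\bullet}$ (Lemma \ref{Lem4.27} and the surrounding discussion), the same identity holds on $\mathop{lim^{\Gamma}}\limits^{\bullet}(\mathop{D}\limits^{\bullet})$, so by iteration $\mathrm{id}-\mathop{lim^{\Gamma}}\limits^{\bullet}(\widetilde{\mathrm{Sd}})^{k}$ is null‑homotopic for every $k$. Given a $1$‑cycle $z=(z_{i})_{i}\in\big(\mathop{lim^{\Gamma}}\limits^{\bullet}(\mathop{D}\limits^{\bullet})\big)_{1}$, each $z_{i}$ is a finite $1$‑chain, so $\mathrm{Sd}^{k}z_{i}=0$ in $\mathop{D}\limits^{\bullet}(i)_{1}$ for $k$ large; producing a single $k$ that kills every $z_{i}$ simultaneously then gives $\mathop{lim^{\Gamma}}\limits^{\bullet}(\widetilde{\mathrm{Sd}})^{k}(z)=0$, whence $z$ is a boundary in $\mathop{lim^{\Gamma}}\limits^{\bullet}(\mathop{D}\limits^{\bullet})$ and $[z]=0$.

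The main obstacle is the phrase ``a single $k$ that works at every vertex'': $\mathop{lim^{\Gamma}}\limits^{\bullet}(\widetilde{\mathrm{Sd}})^{k}$ is applied with the same $k$ at all vertices, while a priori the number of subdivisions needed to absorb $z_{i}$ depends on $i$. The point that should save the argument is that $f_{ij}(T^{'}(i))\subset T^{'}(j)$ and $f_{ij}(T^{''}(i))\subset T^{''}(j)$ make the least such number non‑increasing along arrows, so along the compatibility relations $\bar h_{ij}(z_{i})=z_{j}$ the subdivision depth can only drop; this is what makes the phenomenon controllable in degrees $0$ and $1$, whereas in higher degrees the failure of $\mathop{lim^{\Gamma}}\limits^{\bullet}$ to be exact obstructs the same conclusion — which is why the theorem is stated only for $n=0,1$. (An alternative to the subdivision homotopy is to use that each $\mathop{D}\limits^{\bullet}(i)$ is a bounded‑below acyclic complex of \emph{free} abelian groups, hence split exact, and to patch the vertexwise splittings by a defect computation; the uniformity issue reappears there as the vanishing of a $1$‑cocycle over $\Gamma$.) I would therefore devote the bulk of the write‑up to the case $n=1$ and to making this uniformity precise, the case $n=0$ being immediate from $\mathop{D}\limits^{\bullet}(i)_{0}=0$.
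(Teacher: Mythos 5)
Your $n=0$ case is complete and correct: $T(i)=T'(i)\cup T''(i)$ forces $S_0(T(i))=S_0(T'(i))+S_0(T''(i))$, so the quotient vanishes in degrees $0$ and $-1$ and $H_0$ of the limit is zero. You have also correctly isolated where the whole content of the case $n=1$ lies, namely in the surjectivity of $\partial_2$ after applying $lim^{\Gamma}$, which does not follow formally from vertexwise surjectivity; the paper itself takes a different and much shorter route (left exactness of $lim^{\Gamma}$ applied to the vertexwise exact quotient complexes), and the chain-level input you are trying to supply with barycentric subdivision is exactly the delicate point. The genuine gap is the one you flag yourself: your argument needs a single $k$ with $\mathrm{Sd}^{k}z_i=0$ for all $i$, and the monotonicity of the minimal subdivision depth along arrows does not produce one. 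Monotonicity only compares a vertex with its successors, while the theorem carries no finiteness hypothesis on $\Gamma$ (unlike Section 6). Already for a quiver with infinitely many vertices and no arrows (where $lim^{\Gamma}$ is a product and the statement is trivially true), or for a star $a_1,a_2,\dots\rightarrow c$ with all $f_{a_nc}$ constant, one may place at the $n$-th source the class of a $1$-simplex winding $n$ times around a circle covered by two arcs: the family is compatible, the minimal depths are unbounded, and no uniform $k$ exists. So the proof as structured cannot be completed for general $\Gamma$; it does go through when $\Gamma$ is finite, where $k=\max_i$ suffices, but then the "bulk of the write-up" you defer is trivial rather than the hard case.

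The repair is to abandon uniformity and turn your subdivision data into a natural section, which your own naturality observations almost give you. The group $D_1(i)=S_1(T(i))/(S_1(T'(i))+S_1(T''(i)))$ is free on the $1$-simplices $\sigma$ of $T(i)$ contained in neither $T'(i)$ nor $T''(i)$; for such $\sigma$ let $k(\sigma)$ be minimal with $\mathrm{Sd}^{k(\sigma)}\sigma\in S(T'(i))+S(T''(i))$ and set $s_i(\sigma)=[T_{k(\sigma)}\sigma]$, where $T_k=\sum_{m<k}T\,\mathrm{Sd}^m$ satisfies $\partial T_k+T_k\partial=\mathrm{id}-\mathrm{Sd}^k$. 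Then $\partial_2 s_i(\sigma)=[\sigma]$, because $\mathrm{Sd}^{k(\sigma)}\sigma$ and $T_{k(\sigma)}\partial\sigma$ die in the quotient (the latter since $\partial\sigma\in S_0(T'(i))+S_0(T''(i))$), and $s$ commutes with the induced maps $\bar h_{ij}$: if $f_{ij}\sigma$ lands in $T'(j)$ or $T''(j)$ both sides vanish, and otherwise $k(f_{ij}\sigma)\leq k(\sigma)$ and $T_{k(\sigma)}(f_{ij}\sigma)-T_{k(f_{ij}\sigma)}(f_{ij}\sigma)=\sum_{m=k(f_{ij}\sigma)}^{k(\sigma)-1}T\,\mathrm{Sd}^m(f_{ij}\sigma)$ already lies in $S(T'(j))+S(T''(j))$. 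Hence $s$ is a morphism in $\textbf{Ab}\mathrm{-}\textbf{Rep}\Gamma$ splitting $\partial_2$, so $lim^{\Gamma}(\partial_2)$ is surjective and $H_1=0$ for every quiver, with the subdivision depth allowed to vary from simplex to simplex; this also shows that the cocycle obstruction feared in your parenthetical alternative actually vanishes.
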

\begin{proof}
We first prove that the functor $lim^{\Gamma}: \mathbf{Ab\mathrm{-}Rep\Gamma}\rightarrow\mathbf{Ab}$ is left exact for any quiver $\Gamma$.
For any short exact sequence in $\mathbf{Ab\mathrm{-}Rep\Gamma}$
$$\xymatrix{
&0\ar[r]&(A,f)\ar[r]^{\alpha}&(B,g)\ar[r]^{\beta}&(C,h)\ar[r]&0
},$$
we want show $0\longrightarrow lim^{\Gamma}(A,f)\longrightarrow lim^{\Gamma}(B,g)\longrightarrow lim^{\Gamma}(C,h)$ is exact.
Apparently, $lim^{\Gamma}(\alpha)$ is injective and $lim^{\Gamma}(\beta)lim^{\Gamma}(\alpha)=0$. It suffices to show $kerlim^{\Gamma}(\beta)\subset Imlim^{\Gamma}(\alpha)$.For all $(b_i)_{i\in\Gamma}\in Kerlim^{\Gamma}(\beta)$, we have $g_{ij}(b_i)=b_j$
and $\beta_i(b_i)=0$. From the above exact sequence, we have $a_i\in A_i$ for each $i\in\Gamma$ such that $\alpha_i(a_i)=b_i$.
Then $b_j=\alpha_j(a_j)=g_{ij}(b_i)=g_{ij}\alpha_i(a_i)=\alpha_j f_{ij}(a_i)$, and $a_j=f_{ij}(a_j)$ since $\alpha$ is injective.
Therefore, $(a_i)_{i\in\Gamma}\in lim^{\Gamma}(A,f)$ and $(b_i)_{i\in\Gamma}=lim^{\Gamma}((a_i)_{i\in\Gamma})\in Imlim^{\Gamma}(\alpha)$

Let $(\mathop{C}\limits^{\bullet},g)=\frac{S^{\Gamma}(T,f)}{S^{\Gamma}(T^{'},f)+S^{\Gamma}(T^{''},f)}$, then, $\mathop{C}\limits^{\bullet}(i)=\frac{S((T(i)))}{S(T^{'}(i))+S(T^{''}(i))}$. From Remark, we know that $\mathop{C}\limits^{\bullet}(i)$ is exact. And let $(C^n,f^n)$ be in
$\mathbf{Ab\mathrm{-}Rep\Gamma}$ with $C^n(i)=(\mathop{C}\limits^{\bullet}(i))_n$ and $(f^n)_{ij}=(f_{ij})_n$. From the above discussion, we know that there is an exact sequence in $\mathbf{Ab\mathrm{-}Rep\Gamma}:0\longrightarrow(C^0,f^0)\longrightarrow(C^1,f^1)\longrightarrow(C^2,f^2)\longrightarrow\dots.$ Since
$lim^{\Gamma}$ is a left exact functor, we have an exact sequence:$0\longrightarrow lim^{\Gamma}(C^0,f^0)\longrightarrow
lim^{\Gamma}(C^1,f^1)\longrightarrow lim^{\Gamma}(C^2,f^2)$. From the definition, $(\mathop{lim^{\Gamma}}\limits^{\bullet}(\frac{S^{\Gamma}(T,f)}{S^{\Gamma}(T^{'},f)+S^{\Gamma}(T^{''},f)}))_n=lim^{\Gamma}(C^n,f^n)$ for all $n\geq0$.
Thus, $H_n(\mathop{lim^{\Gamma}}\limits^{\bullet}(\frac{S^{\Gamma}(T,f)}{S^{\Gamma}(T^{'},f)+S^{\Gamma}(T^{''},f)}))=0$ for $n=0,1$.
\end{proof}
\begin{Corollary}
Let $\Gamma$ be a quiver, then the functor $lim^{\Gamma}:\textbf{Ab}\mathrm{-}\textbf{Rep}\Gamma\rightarrow \mathbf{Ab}$ is left exact.
\end{Corollary}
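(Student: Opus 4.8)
The plan is to extract and restate the argument that already appears as the opening step of the proof of Theorem \ref{thm5.12}. Concretely, I would take an arbitrary short exact sequence in $\textbf{Ab}\mathrm{-}\textbf{Rep}\Gamma$,
$$0\longrightarrow (A,f)\xrightarrow{\ \alpha\ }(B,g)\xrightarrow{\ \beta\ }(C,h)\longrightarrow 0,$$
meaning that each $\alpha_i$ is injective, each $\beta_i$ surjective, $\mathrm{Im}\,\alpha_i=\ker\beta_i$ for every vertex $i$, and the evident squares built from $f_{ij},g_{ij},h_{ij}$ commute; and I would show that
$$0\longrightarrow lim^{\Gamma}(A,f)\xrightarrow{\ lim^{\Gamma}(\alpha)\ } lim^{\Gamma}(B,g)\xrightarrow{\ lim^{\Gamma}(\beta)\ } lim^{\Gamma}(C,h)$$
is exact, which is exactly the assertion that $lim^{\Gamma}$ is left exact.

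First I would recall from Definition \ref{sequence} that $lim^{\Gamma}(G,\phi)$ is the subgroup of compatible tuples inside $\prod_{i\in\Gamma_{0}}G(i)$, and that on morphisms $lim^{\Gamma}$ is simply the restriction of the product map. Injectivity of $lim^{\Gamma}(\alpha)$ is then immediate because $\prod_i\alpha_i$ is injective, and $lim^{\Gamma}(\beta)\circ lim^{\Gamma}(\alpha)=0$ since $\beta_i\alpha_i=0$ for every $i$. The one point requiring care is exactness in the middle: given $(b_i)_{i\in\Gamma}\in\ker lim^{\Gamma}(\beta)$, one has $\beta_i(b_i)=0$, so exactness of the original sequence at $B(i)$ produces an element $a_i\in A(i)$ with $\alpha_i(a_i)=b_i$, unique because $\alpha_i$ is injective. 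I would then check that $(a_i)_{i\in\Gamma}$ actually lies in $lim^{\Gamma}(A,f)$, i.e. $f_{ij}(a_i)=a_j$ whenever the arrow $i\to j$ exists, via
$$\alpha_j\big(f_{ij}(a_i)\big)=g_{ij}\big(\alpha_i(a_i)\big)=g_{ij}(b_i)=b_j=\alpha_j(a_j)$$
and injectivity of $\alpha_j$. Hence $(b_i)_{i\in\Gamma}=lim^{\Gamma}(\alpha)\big((a_i)_{i\in\Gamma}\big)\in\mathrm{Im}\,lim^{\Gamma}(\alpha)$, finishing the verification.

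I do not expect any real obstacle: this is a purely formal consequence of the fact that $lim^{\Gamma}$ is an inverse-limit-type construction. Indeed, when $\Gamma$ is acyclic, the discussion following Definition \ref{sequence} identifies $lim^{\Gamma}$ with the genuine inverse limit over the poset $\Gamma_{0}$, and inverse limits in $\textbf{Ab}$ are always left exact; the only subtlety is that for quivers with oriented cycles the poset description is unavailable, but the elementwise argument above uses no acyclicity assumption whatsoever and so covers an arbitrary quiver $\Gamma$. I would therefore present the corollary simply as the isolated statement of the left-exactness step inside the proof of Theorem \ref{thm5.12}, and, if desired, remark that right exactness fails in general (already for a cospan $i\to k\leftarrow j$, where $lim^{\Gamma}$ is the fibre product, which is not right exact), so that ``left exact'' is the best one can say.
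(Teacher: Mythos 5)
Your proposal is correct and is essentially the paper's own argument: the corollary is precisely the left-exactness step isolated from the beginning of the proof of Theorem \ref{thm5.12}, and your elementwise verification (injectivity of $lim^{\Gamma}(\alpha)$, vanishing of the composite, and the diagram chase $\alpha_j(f_{ij}(a_i))=g_{ij}(\alpha_i(a_i))=g_{ij}(b_i)=b_j=\alpha_j(a_j)$ followed by injectivity of $\alpha_j$) matches the paper's proof, and in fact states the final compatibility more carefully than the paper's slightly garbled line ``$a_j=f_{ij}(a_j)$''. Nothing further is needed.
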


\begin{Corollary}
Let $\Gamma$ be a quiver, the the functor $\mathop{lim^{\Gamma}}\limits^{\bullet}:\textbf{C}\mathrm{-}\textbf{Rep}\Gamma\rightarrow\textbf{C}$ is left exact.
\end{Corollary}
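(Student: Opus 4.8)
The plan is to reduce the statement to the already-established left exactness of $lim^{\Gamma}\colon\textbf{Ab}\mathrm{-}\textbf{Rep}\Gamma\rightarrow\mathbf{Ab}$ (the preceding corollary, proved inside the proof of Theorem \ref{thm5.12}) by working one homological degree at a time. The point to fix first is the meaning of ``left exact'' in $\textbf{C}$: a sequence $0\to\mathop{X}\limits^{\bullet}\to\mathop{Y}\limits^{\bullet}\to\mathop{Z}\limits^{\bullet}$ of chain complexes of abelian groups is exact exactly when $0\to X_n\to Y_n\to Z_n$ is exact in $\mathbf{Ab}$ for every $n$; so I must take a short exact sequence $0\to(\mathop{A}\limits^{\bullet},f)\xrightarrow{\alpha}(\mathop{B}\limits^{\bullet},g)\xrightarrow{\beta}(\mathop{C}\limits^{\bullet},h)\to0$ in $\textbf{C}\mathrm{-}\textbf{Rep}\Gamma$ and show that $0\to\mathop{lim^{\Gamma}}\limits^{\bullet}(\mathop{A}\limits^{\bullet},f)\to\mathop{lim^{\Gamma}}\limits^{\bullet}(\mathop{B}\limits^{\bullet},g)\to\mathop{lim^{\Gamma}}\limits^{\bullet}(\mathop{C}\limits^{\bullet},h)$ is exact in $\textbf{C}$, which I will verify degree by degree.

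Next I would unwind the degree-$n$ part. By the definition preceding Definition \ref{Def4.9}, $(\mathop{lim^{\Gamma}}\limits^{\bullet}(\mathop{A}\limits^{\bullet},f))_n=lim^{\Gamma}(A^n,f^n)$, where $(A^n,f^n)\in\textbf{Ab}\mathrm{-}\textbf{Rep}\Gamma$ has $A^n(i)=\mathop{A}\limits^{\bullet}(i)_n$ and $(f^n)_{ij}=(f_{ij})_n$, and similarly for $\mathop{B}\limits^{\bullet}$ and $\mathop{C}\limits^{\bullet}$. A short exact sequence in $\textbf{C}\mathrm{-}\textbf{Rep}\Gamma$ is one that is exact as a sequence of complexes at each vertex $i$, i.e. $0\to\mathop{A}\limits^{\bullet}(i)\to\mathop{B}\limits^{\bullet}(i)\to\mathop{C}\limits^{\bullet}(i)\to0$ is a short exact sequence of complexes for every $i\in\Gamma_0$; taking its degree-$n$ part and letting $i$ range over $\Gamma_0$ yields a short exact sequence $0\to(A^n,f^n)\xrightarrow{\alpha^n}(B^n,g^n)\xrightarrow{\beta^n}(C^n,h^n)\to0$ in $\textbf{Ab}\mathrm{-}\textbf{Rep}\Gamma$, because $\alpha^n_i,\beta^n_i$ (the degree-$n$ components of the chain maps $\alpha_i,\beta_i$) are group homomorphisms that commute with the transition maps $f^n,g^n,h^n$, this last fact being exactly the statement that the $\alpha_i$ and $\beta_i$ assemble to morphisms of $\Gamma$-representations. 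Applying the preceding corollary degreewise then gives exactness of $0\to lim^{\Gamma}(A^n,f^n)\to lim^{\Gamma}(B^n,g^n)\to lim^{\Gamma}(C^n,h^n)$ for each $n$, which is precisely exactness of $0\to\mathop{lim^{\Gamma}}\limits^{\bullet}(\mathop{A}\limits^{\bullet},f)\to\mathop{lim^{\Gamma}}\limits^{\bullet}(\mathop{B}\limits^{\bullet},g)\to\mathop{lim^{\Gamma}}\limits^{\bullet}(\mathop{C}\limits^{\bullet},h)$ in $\textbf{C}$.

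The only remaining bookkeeping is compatibility with the differentials: I should note that $\mathop{lim^{\Gamma}}\limits^{\bullet}(\alpha)$ and $\mathop{lim^{\Gamma}}\limits^{\bullet}(\beta)$ are genuine chain maps, but this is already built into the construction recalled before Definition \ref{Def4.9} and into the functoriality statement for $\mathop{lim^{\Gamma}}\limits^{\bullet}$ (namely $\mathop{lim^{\Gamma}}\limits^{\bullet}(\mathop{A}\limits^{\bullet},f)$ is a subcomplex of $\prod_{i\in\Gamma}\mathop{A}\limits^{\bullet}(i)$ with the restricted differential, and morphisms are restrictions of $\prod_{i\in\Gamma}(\cdot)_i$), so no fresh verification is needed there. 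I do not expect a genuine obstacle: the entire content sits in the $\mathbf{Ab}$-level corollary already proved, and the only subtlety is the purely formal observation that exactness in $\textbf{C}$ is detected in each degree, so that left exactness of $\mathop{lim^{\Gamma}}\limits^{\bullet}$ on $\textbf{C}\mathrm{-}\textbf{Rep}\Gamma$ is just left exactness of $lim^{\Gamma}$ on $\textbf{Ab}\mathrm{-}\textbf{Rep}\Gamma$ applied degreewise; if any care is required, it is in stating explicitly at the outset what ``short exact sequence'' and ``left exact functor'' mean in the category $\textbf{C}$.
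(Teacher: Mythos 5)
Your proposal is correct and follows exactly the route the paper intends: the corollary is left unproved in the text precisely because it is the preceding $\mathbf{Ab}$-level corollary (established inside the proof of Theorem \ref{thm5.12}) applied degreewise, using $(\mathop{lim^{\Gamma}}\limits^{\bullet}(\mathop{A}\limits^{\bullet},f))_n=lim^{\Gamma}(A^n,f^n)$ and the fact that exactness in $\textbf{C}$ is checked in each degree. Your explicit bookkeeping about short exact sequences in $\textbf{C}\mathrm{-}\textbf{Rep}\Gamma$ restricting to short exact sequences in $\textbf{Ab}\mathrm{-}\textbf{Rep}\Gamma$ in every degree is exactly the detail the paper suppresses.
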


\section{On the functor $At^{\Gamma}$ from $\textbf{Top}\mathrm{-}\textbf{Rep}\Gamma$ to
$\textbf{Top}$}

Given a quiver $\Gamma$, it is  natural to associate each top-representation $(T,f)$ of $\Gamma$ with a concrete topology, which is determined
by both the quiver $\Gamma$ and the top-representation $(T,f)$. First, we define a functor $At^{\Gamma}$ from $\textbf{Top}\mathrm{-}\textbf{Rep}\Gamma$ to
$\textbf{Top}$, which preserves homotopy equivalence property. Then by analyzing topological space $At^{\Gamma}(T,f)$, we can obtain some properties of quivers, such as the connectivity of quivers and the numbers of components of  quivers. Last, we established the relationship between the homotopy groups of top-representation
and the homotopy groups of the corresponding topological space.

Throughout this section, we always assume the quiver $\Gamma$ is finite.

\begin{Remark}\label{Rem5.1}
let $X$ be a topological space and $D$ a subset of $X\times X$, we know that $D$ can generate an equivalence relationship $"\simeq^D"$ on $X$. Let $f:X\rightarrow Y$ be a continuous map between two topological spaces, and $D,E$ be subsets of $X\times X,Y\times Y$ respectively. If $(f\times f)(D)\subset E$, that is, $\{(f(x_1),f(x_2))|(x_1,x_2)\in D\}\subset E$, then $f$ induces a continuous map $\overline{f}:X/\simeq^D\rightarrow Y/\simeq^E$, $\overline{x}\mapsto\overline{f(x)}$. That is because:

Let $C^{-1}=\{(x_1,x_2)|(x_2,x_1)\in C\}, K^{X}=\{(x,x)|x\in X\}$, define $\widetilde{C}=C\cup C^{-1}\cup K^{X}$. And define
$\overline{C}=\{(x_1,x_2)|(x_1,x_2)\in\widetilde{ C}\mathrm{ \;or\; there\; exist\; }v_1,\dots,v_n\in X\mathrm{\;such\;that\;(x_1,v_1),(v_1,v_2),\dots,(v_n,x_2)\in\widetilde{C}}\}$. Then it is easy to show that
$C$ generates $\overline{C}$. Similarly, we define $\widetilde{D},\overline{D}$. Then it suffices to show that
$(f\times f)(\overline{C})\subset\overline{D}$.Since $(f\times f)(C)\subset D$, $(f\times f)(C^{-1})\subset
D^{-1}$ and $(f\times f)(K^{X})\subset K^{Y}$, that is, $(f\times f)(\widetilde{C})\subset\widetilde{D}$.
Then, obviously, $(f\times f)(\overline{C})\subset\overline{D}$.

\end{Remark}
\begin{Definition}\label{Def6.2}
For a quiver $\Gamma$,  $(T,f)\in\textbf{Top}\mathrm{-}\textbf{Rep}\Gamma$, denote:
$$D=\{(x_i,x_j)|x_i\in T(i);x_j\in T(j); f_{ij}(x_i)=x_j;\forall i\rightarrow j\in\Gamma\}\subset\coprod_{i\in\Gamma}T(i)\times\coprod_{i\in\Gamma}T(i).$$
 We define $At^{\Gamma}$ from $\textbf{Top}\mathrm{-}\textbf{Rep}\Gamma$ to
$\textbf{Top}$ satisfying that \\
$\bullet$  $At^{\Gamma}(T,f)=(\coprod_{i\in\Gamma}T(i))/\simeq^{D}$;\\
$\bullet$ for any morphism $\alpha: (T,f)\rightarrow (T^{'},f^{'})$, define $$At^{\Gamma}(\alpha): At^{\Gamma}(T,f)\rightarrow
At^{\Gamma}(T^{'},f^{'})\;\; \text{such that} \;\;\overline{x}\mapsto\overline{(\coprod_{i\in\Gamma}\alpha_{i})(x)}.$$
\end{Definition}

\begin{Theorem}\label{The6.3}
Let $\Gamma$ be a quiver. Then $At^{\Gamma}$ is a functor from $\textbf{Top}\mathrm{-}\textbf{Rep}\Gamma$ to
$\textbf{Top}$.
\end{Theorem}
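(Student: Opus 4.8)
The plan is to check, in order, that $At^{\Gamma}$ is well defined on objects, that it is well defined and continuous on morphisms, and that it preserves identities and composition.

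On objects there is essentially nothing to do beyond unwinding Definition \ref{Def6.2}: since $\Gamma$ is finite, $\coprod_{i\in\Gamma}T(i)$ is a finite disjoint union of topological spaces, and $D$ is a concrete subset of $\bigl(\coprod_{i\in\Gamma}T(i)\bigr)\times\bigl(\coprod_{i\in\Gamma}T(i)\bigr)$; by Remark \ref{Rem5.1} it generates an equivalence relation $\simeq^{D}$, and $At^{\Gamma}(T,f)$ is the corresponding quotient space with the quotient topology, hence a genuine object of $\textbf{Top}$.

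The main point is the behaviour on morphisms. Given a morphism $\alpha=\{\alpha_i\}_{i\in\Gamma}\colon (T,f)\to(T',f')$, put $g=\coprod_{i\in\Gamma}\alpha_i\colon\coprod_{i\in\Gamma}T(i)\to\coprod_{i\in\Gamma}T'(i)$, which is continuous, and let $D'$ be the subset of $\bigl(\coprod_{i\in\Gamma}T'(i)\bigr)^{\times 2}$ attached to $(T',f')$ by Definition \ref{Def6.2}. The crucial step is the inclusion $(g\times g)(D)\subset D'$: if $(x_i,x_j)\in D$, i.e. there is an arrow $i\to j$ in $\Gamma$ with $f_{ij}(x_i)=x_j$, then commutativity of Figure (I) gives $f'_{ij}(\alpha_i(x_i))=\alpha_j(f_{ij}(x_i))=\alpha_j(x_j)$, so $(g(x_i),g(x_j))=(\alpha_i(x_i),\alpha_j(x_j))\in D'$. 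By Remark \ref{Rem5.1} the map $g$ therefore descends to a well-defined continuous map $\overline{g}\colon At^{\Gamma}(T,f)\to At^{\Gamma}(T',f')$, $\overline{x}\mapsto\overline{g(x)}$, which is exactly $At^{\Gamma}(\alpha)$; in particular $At^{\Gamma}(\alpha)$ does not depend on the chosen representative of $\overline{x}$. In the degenerate case where $\mathrm{Hom}_{\textbf{Top}\mathrm{-}\textbf{Rep}\Gamma}((T,f),(T',f'))=\{0\}$ (condition $\gamma$ failing), one simply records that the prescribed image of the zero morphism again lands in $\textbf{Top}$, so this case raises no issue.

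Finally, functoriality is formal. From $\coprod_{i\in\Gamma}\mathrm{id}_{T(i)}=\mathrm{id}_{\coprod_{i\in\Gamma}T(i)}$ we get $At^{\Gamma}(\mathrm{id}_{(T,f)})=\mathrm{id}_{At^{\Gamma}(T,f)}$, and for composable morphisms $\alpha\colon(T,f)\to(T',f')$ and $\beta\colon(T',f')\to(T'',f'')$ the identity $\coprod_{i\in\Gamma}(\beta_i\alpha_i)=\bigl(\coprod_{i\in\Gamma}\beta_i\bigr)\bigl(\coprod_{i\in\Gamma}\alpha_i\bigr)$ on the disjoint unions passes to the quotients and yields $At^{\Gamma}(\beta\alpha)=At^{\Gamma}(\beta)\,At^{\Gamma}(\alpha)$. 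I expect the only non-routine ingredient to be the verification of $(g\times g)(D)\subset D'$ together with the observation, supplied by Remark \ref{Rem5.1}, that an inclusion between the generating relations is enough to induce a (well-defined, continuous) map between the quotient spaces; everything else is bookkeeping.
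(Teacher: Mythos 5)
Your proposal is correct and follows essentially the same route as the paper's proof: the key step in both is verifying $(\coprod_i\alpha_i\times\coprod_i\alpha_i)(D)\subset D'$ via the commutativity $f'_{ij}\alpha_i=\alpha_j f_{ij}$ and then invoking Remark \ref{Rem5.1} to descend to a continuous map on the quotients. The identity and composition checks, which the paper dismisses as obvious, are handled by you with the same routine bookkeeping, so there is no substantive difference.
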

\begin{proof}
For any two objects $(T,f),(T^{'},f^{'})\in\textbf{Top}\mathrm{-}\textbf{Rep}\Gamma$, and any morphism $\alpha:
(T,f)\rightarrow (T^{'},f^{'})$. We first show $At^{\Gamma}(\alpha):At^{\Gamma}(T,f)\rightarrow
At^{\Gamma}(T^{'},f^{'})$ is a well-defined continuous map. From definition, we have
\begin{equation}
\begin{split}
At^{\Gamma}(T,f)=&(\coprod_{i\in\Gamma}T(i))/\simeq_{D},\\
At^{\Gamma}(T^{'},f^{'})=&(\coprod_{i\in\Gamma}T^{'}(i))/\simeq_{D^{'}}
\end{split}
\end{equation}
 where
 \begin{equation}
 \begin{split}
D=&\{(x_i,x_j)|x_i\in T(i);x_j\in T(j); f_{ij}(x_i)=x_j;\forall i\rightarrow j\in\Gamma\},\\
D^{'}=&\{(x_i^{'},x_j^{'})|x_i^{'}\in T^{'}(i);x_j^{'}\in T^{'}(j); f_{ij}^{'}(x_i^{'})=x_j^{'};\forall i\rightarrow j\in\Gamma\}.
\end{split}
\end{equation}
And $\coprod_{i\in\Gamma}\alpha_i:\coprod_{i\in\Gamma}T(i)\rightarrow\coprod_{i\in\Gamma}T^{'}(i)$ is a continuous map.

 For all
$(x_i,x_j)\in D$ and $i\rightarrow j\in\Gamma$, $((\coprod_{i\in\Gamma}\alpha_i)(x_i),(\coprod_{i\in\Gamma}\alpha_i)(x_j))=(\alpha_i(x_i),\alpha_j(x_j))$, and
$f_{ij}^{'}(\alpha_{i}(x_i))=\alpha_j(f_{ij}(x_i))=\alpha_j(x_j)$. Thus, $((\coprod_{i\in\Gamma}\alpha_i)(x_i),(\coprod_{i\in\Gamma}\alpha_i)(x_j))\in D^{'}$. According to Remark \ref{Rem5.1}, $\coprod_{i\in\Gamma}\alpha_i$
induces a continuous map $At^{\Gamma}(\alpha):At^{\Gamma}(T,f)\rightarrow At^{\Gamma}(T^{'},f^{'}),\overline{x}\mapsto\overline{(\coprod_{i\in\Gamma}\alpha_{i})(x)}$, that is, for all $x_i\in T(i),i\in\Gamma, At^{\Gamma}(\alpha)(\overline{x_i})=\overline{\alpha_i(x_i)}$.

Obviously, we have
$At^{\Gamma}(\beta\alpha)=At^{\Gamma}(\beta)At^{\Gamma}(\alpha), At^{\Gamma}(1_{(T,f)})=1_{At^{\Gamma}(T,f)}$, where
 $1_{(T,f)}$ is the identity morphism of $(T,f)$, $\alpha \in Hom_{\textbf{Top}\mathrm{-}\textbf{Rep}\Gamma}((T,f),(T^{'},f^{'}))$, $\beta \in Hom_{\textbf{Top}\mathrm{-}\textbf{Rep}\Gamma}((T^{'},f^{'}),(T^{''},f^{''}))$.
\end{proof}
\begin{Theorem}\label{The6.4} ({\bf Homotopy invariant})
 Let $\Gamma$ be a quiver. For any $\alpha\simeq^t\beta:(T,f)\rightarrow(T^{'},f^{'})$ in $\textbf{Top}\mathrm{-}\textbf{Rep}\Gamma$. Then,   as both morphisms from $At^{\Gamma}(T,f)$ to $At^{\Gamma}(T^{'},f^{'})$  in $\textbf{Top}$, it holds $At^{\Gamma}(\alpha)\simeq^t At^{\Gamma}(\beta)$.
 \end{Theorem}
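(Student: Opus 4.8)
The plan is to manufacture an explicit homotopy
$G:At^{\Gamma}(T,f)\times I\to At^{\Gamma}(T^{'},f^{'})$ out of the given homotopy
$H:\alpha\simeq^t\beta$, which by definition consists of continuous maps
$H_i:T(i)\times I\to T^{'}(i)$ with $H_i(-,0)=\alpha_i$, $H_i(-,1)=\beta_i$ and
$f^{'}_{ij}\circ H_i=H_j\circ(f_{ij}\times1)$ for every arrow $i\to j$. First I would
assemble the $H_i$ into one map: since $\Gamma$ is finite the canonical bijection
$\coprod_{i\in\Gamma}\bigl(T(i)\times I\bigr)\cong\bigl(\coprod_{i\in\Gamma}T(i)\bigr)\times I$
is a homeomorphism, so $\coprod_i H_i$ yields a continuous map
$\widetilde H:\bigl(\coprod_i T(i)\bigr)\times I\to\coprod_i T^{'}(i)$, and composing
with the quotient map $q^{'}:\coprod_i T^{'}(i)\to At^{\Gamma}(T^{'},f^{'})$ gives a
continuous map $\Phi:=q^{'}\circ\widetilde H:\bigl(\coprod_i T(i)\bigr)\times I\to At^{\Gamma}(T^{'},f^{'})$.

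The core step is to show that, for each fixed $t\in I$, the map $x\mapsto\Phi(x,t)$ is
constant on the $\simeq^{D}$-equivalence classes of $\coprod_i T(i)$. By the description
of the generated equivalence relation recalled in Remark \ref{Rem5.1}, it suffices to
check this on a generating pair $(x_i,x_j)\in D$, i.e. $x_i\in T(i)$, $x_j\in T(j)$,
$f_{ij}(x_i)=x_j$. For such a pair the commuting square for $H$ gives
\[
\widetilde H(x_j,t)=H_j\bigl(f_{ij}(x_i),t\bigr)=H_j\bigl((f_{ij}\times1)(x_i,t)\bigr)=f^{'}_{ij}\bigl(H_i(x_i,t)\bigr),
\]
so $\bigl(H_i(x_i,t),H_j(x_j,t)\bigr)=\bigl(H_i(x_i,t),f^{'}_{ij}(H_i(x_i,t))\bigr)\in D^{'}$,
whence $q^{'}(H_i(x_i,t))=q^{'}(H_j(x_j,t))$, i.e. $\Phi(x_i,t)=\Phi(x_j,t)$. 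Thus $\Phi$
is constant on the fibres of $q\times\mathrm{id}_I$, where $q:\coprod_i T(i)\to At^{\Gamma}(T,f)$
is the quotient map.

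Next I would pass to the quotient. Because $I$ is locally compact Hausdorff, Whitehead's
theorem says $q\times\mathrm{id}_I:\bigl(\coprod_i T(i)\bigr)\times I\to At^{\Gamma}(T,f)\times I$
is again a quotient map; since $\Phi$ is continuous and constant on its fibres, it factors
uniquely as $\Phi=G\circ(q\times\mathrm{id}_I)$ with $G:At^{\Gamma}(T,f)\times I\to At^{\Gamma}(T^{'},f^{'})$
continuous. Evaluating at the endpoints, for $x_i\in T(i)$ we get
$G(\overline{x_i},0)=\Phi(x_i,0)=q^{'}(H_i(x_i,0))=\overline{\alpha_i(x_i)}=At^{\Gamma}(\alpha)(\overline{x_i})$,
and likewise $G(\overline{x_i},1)=At^{\Gamma}(\beta)(\overline{x_i})$, so
$G:At^{\Gamma}(\alpha)\simeq^t At^{\Gamma}(\beta)$.

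The step I expect to be the main obstacle is the passage to the quotient: one must know
that the product topology on $At^{\Gamma}(T,f)\times I$ coincides with the quotient
topology induced from $\bigl(\coprod_i T(i)\bigr)\times I$, which is precisely Whitehead's
theorem and uses local compactness of $I$ essentially. It is worth noting that the whole
argument can be repackaged functorially: form the top-representation $(T\times I,\,f\times1)$
with $(T\times I)(i)=T(i)\times I$ and $(f\times1)_{ij}=f_{ij}\times1$; then $H$ is a
morphism $(T\times I,f\times1)\to(T^{'},f^{'})$ in $\textbf{Top}\mathrm{-}\textbf{Rep}\Gamma$, the
endpoint inclusions $\lambda_k:(T,f)\to(T\times I,f\times1)$, $x\mapsto(x,k)$ ($k=0,1$),
are morphisms with $H\lambda_0=\alpha$, $H\lambda_1=\beta$, and the same local-compactness
argument gives a natural homeomorphism $At^{\Gamma}(T\times I,f\times1)\cong At^{\Gamma}(T,f)\times I$
carrying $At^{\Gamma}(H)$ to the homotopy $G$ above and $At^{\Gamma}(\lambda_k)$ to the two
end inclusions; functoriality of $At^{\Gamma}$ (Theorem \ref{The6.3}) then yields
$At^{\Gamma}(\alpha)\simeq^t At^{\Gamma}(\beta)$ at once.
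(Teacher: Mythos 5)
Your proposal is correct and follows essentially the same route as the paper: the key points — using the commuting squares for $H$ to check compatibility with the generating pairs of the equivalence relation, invoking local compactness of $I$ so that the product of the quotient map with $\mathrm{id}_I$ is again a quotient map, and verifying the endpoint conditions — are exactly the ingredients of the paper's argument, and your closing functorial repackaging via $(T\times I, f\times 1)$ and the homeomorphism $At^{\Gamma}(T\times I,f\times1)\cong At^{\Gamma}(T,f)\times I$ is literally the paper's proof. Your main line merely streamlines it by factoring $q'\circ\widetilde H$ directly through $q\times\mathrm{id}_I$ instead of first constructing that homeomorphism explicitly, which is a harmless simplification.
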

 \begin{proof}
 Assume that $H:\alpha\simeq^t\beta$, then $H_i:\alpha_i\simeq^t\beta_i$ for all $i\in\Gamma$( that is $H_i:T(i)\times I\rightarrow
 T^{'}(i)$, $H_i(x,0)=\alpha_i(x),H_i(x,1)=\beta_i(x)$ for all $x\in T(i)$) and $H_j(f_{ij}\times1)=f_{ij}^{'}H_i$
 for all $i\rightarrow j$ in $\Gamma$. Thus $H:(T\times I,f\times1)\rightarrow(T^{'},f^{'})$ is a morphism in $\textbf{Top}\mathrm{-}\textbf{Rep}\Gamma$ where
 $(T\times I)(i)=T(i)\times I,(f\times1)_{ij}=f_{ij}\times1:T(i)\times I\rightarrow T(j)\times I$. Hence $At^{\Gamma}(H):At^{\Gamma}(T\times I,f\times1)\rightarrow At^{\Gamma}(T^{'},f^{'})$, $\overline{(x_i,t)}\mapsto\overline{H_i(x_i,t)}$ for $(x_i,t)\in T(i)\times I,i\in\Gamma$.

 Next, we will show $At^{\Gamma}(T\times I,f\times1)\simeq At^{\Gamma}(T,f)\times I$. We have
  \begin{equation}
  \begin{split}
  At^{\Gamma}(T\times I,f\times1)=&(\coprod_{i\in\Gamma}(T(i)\times I))/\simeq^D,\\
   At^{\Gamma}(T,f)=&(\coprod_{i\in\Gamma}T(i))/\simeq^C
  \end{split}
  \end{equation}
  where
 \begin{equation}
 \begin{split}
 D=&\{((x_i,t),(x_j,t))|(x_i,t)\in T(i)\times I;(x_j,t)\in T(j)\times I;
 (f_{ij}(x_i),t)=(x_j,t),\forall i\rightarrow j\in\Gamma\}\\
 C=&\{(x_i,x_j)|x_i\in T(i);x_j\in T(j); f_{ij}(x_i)=x_j;\forall i\rightarrow j\in\Gamma\}.
 \end{split}
 \end{equation}
 Let $p$ be the natural map from $\coprod_{i\in\Gamma}T(i)$ to $(\coprod_{i\in\Gamma}T(i))/\simeq^C$, and $q$ be the natural map from $\coprod_{i\in\Gamma}(T(i)\times I)$ to $(\coprod_{i\in\Gamma}(T(i)\times I))/\simeq^D$. Then $p\times1:\coprod_{i\in\Gamma}(T(i)\times I)=(\coprod_{i\in\Gamma}T(i))\times I\rightarrow((\coprod_{i\in\Gamma}T(i))/\simeq^C)\times I;(x,t)\mapsto(\overline{x},t).$ For all $((x_i,t),(x_j,t))\in D$, we have $(f_{ij}(x_i),t)=(x_j,t)$, that is, $f_{ij}(x_i)=x_j$ and $(x_i,x_j)\in C$. Thus $(p\times1)(x_i,t)=(\overline{x_i},t)=(\overline{x_j},t)=
 (p\times1)(x_j,t)$. Then there exists a map $\overline{p\times1}:(\coprod_{i\in\Gamma}(T(i)\times I))/\simeq^D\rightarrow ((\coprod_{i\in\Gamma}T(i))/\simeq^C)\times I,\overline{(x,t)}\mapsto(\overline{x},t)$ such that the following diagram commutes:
 $$\xymatrix{
 &\coprod_{i\in\Gamma}(T(i)\times I)\ar[r]^{q}\ar[d]^{p\times1}&(\coprod_{i\in\Gamma}(T(i)\times I))/\simeq^D\ar[dl]^{\overline{p\times1}}\\
 &((\coprod_{i\in\Gamma}T(i))/\simeq^C)\times I
 }$$
 Since $q$ is an identification and $(\overline{p\times1})q=p\times1$ is continuous, it follows that $\overline{p\times1}$ is continuous.  Clearly, $\overline{p\times1}$ is surjective. For any
 $\overline{(x_1,t)},\overline{(x_2,t)}\in(\coprod_{i\in\Gamma}(T(i)\times I))/\simeq^D$, if $(x_1,x_2)\in\widetilde{C}$, then:

 $\textcircled{1}$ $x_1=x_2$, thus $\overline{(x_1,t)}=\overline{(x_2,t)}$;

 $\textcircled{2}$ $(x_1,x_2)\in C$, thus
 $((x_1,t),(x_2,t))\in D$ and $\overline{(x_1,t)}=\overline{(x_2,t)}$;

 $\textcircled{3}$ $(x_1,x_2)\in C^{-1}$, thus
 $((x_1,t),(x_2,t))\in D^{-1}$ and $\overline{(x_1,t)}=\overline{(x_2,t)}$.

 \noindent In a word, for any $\overline{(x_1,t)},\overline{(x_2,t)}\in(\coprod_{i\in\Gamma}(T(i)\times I))/\simeq^D$, $(x_1,x_2)\in\widetilde{C}$ implies
 $\overline{(x_1,t)}=\overline{(x_2,t)}$. For any $\overline{(x_1,t)},\overline{(x_2,t)}\in(\coprod_{i\in\Gamma}(T(i)\times I))/\simeq^D$, we want to show that $(x_1,x_2)\in\overline{C}$ implies $\overline{(x_1,t)}=\overline{(x_2,t)}$. Since $(x_1,x_2)\in\overline{C}$,
 there exist $v_1,\dots,v_n\in\coprod_{i\in\Gamma}T(i)$ such that $(x_1,v_1),(v_1,v_2),\dots,(v_n,x_2)\in\widetilde{C}$. Thus, $\overline{(x_1,t)}=\overline{(v_1,t)}=\dots=\overline{(v_n,t)}=\overline{(x_2,t)}$. For any $\overline{(x_1,t)},\overline{(x_2,t)}\in(\coprod_{i\in\Gamma}(T(i)\times I))/\simeq^D$, if $\overline{p\times1}(\overline{(x_1,t)})=(\overline{x_1},t)=\overline{p\times1}(\overline{(x_2,t)})=(\overline{x_2},t)$,
  then $\overline{x_1}=\overline{x_2}$, which implies $\overline{(x_1,t)}=\overline{(x_2,t)}$. Therefore $\overline{p\times1}$ is also injective. We have a commutative diagram:
  $$\xymatrix{
 &\coprod_{i\in\Gamma}(T(i)\times I)=(\coprod_{i\in\Gamma}T(i))\times I\ar[r]\ar[d]^{p\times1}&(\coprod_{i\in\Gamma}(T(i)\times I))/\simeq^D\\
 &((\coprod_{i\in\Gamma}T(i))/\simeq^C)\times I\ar[ur]_{(\overline{p\times1})^{-1}}
 }$$
 \noindent  Since $I$ is locally compact and $p$ is an identification, we have that $p\times1$ is an identification. Therefore, $(\overline{p\times1})^{-1}$ is continuous. One has $$(\overline{p\times1})^{-1}:At^{\Gamma}(T,f)\times I\simeq At^{\Gamma}(T\times I,f\times1)\;\;\; \text{via}\;\;\; (\overline{x},t)\mapsto\overline{(x,t)}.$$
 Let $$F=At^{\Gamma}(H)(\overline{p\times1})^{-1}:At^{\Gamma}(T,f)\times I\rightarrow At^{\Gamma}(T^{'},f^{'})\;\;\; \text{via}\;\;\; (\overline{x_i},t)\mapsto\overline{H_i(x_i,t)}$$ for $(x_i,t)\in T(i)\times I,i\in\Gamma$. Then $$F(\overline{x_i},0)=\overline{H_i(x_i,0)}=\overline{\alpha_i(x_i)}=At^{\Gamma}(\alpha)(\overline{x_i}),
 F(\overline{x_i},1)=\overline{H_i(x_i,1)}=\overline{\beta_i(x_i)}=At^{\Gamma}(\beta)(\overline{x_i})$$ for all $x_i\in T(i),i\in\Gamma$. And,
 $F:At^{\Gamma}(\alpha)\simeq^t At^{\Gamma}(\beta):At^{\Gamma}(t,f)\rightarrow At^{\Gamma}(T^{'},f^{'})$ in $\textbf{Top}$.

\end{proof}

\begin{Theorem}\label{The6.5}(\textbf{Connectivity invariant})
Let $\Gamma$ be a connected quiver, and $(T,f)\in\textbf{Top}\mathrm{-}\textbf{Rep}\Gamma$ with all $T(i)$ being connected. Then,
$At^{\Gamma}(T,f)$ is connected.
\end{Theorem}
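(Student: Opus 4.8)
The plan is to realize $At^{\Gamma}(T,f)$ as a union of connected pieces, one for each vertex, which are glued together along the arrows of $\Gamma$, and then to use the connectedness of $\Gamma$ to show that all these pieces lie in a single connected component. Write $p\colon\coprod_{i\in\Gamma}T(i)\to At^{\Gamma}(T,f)=\big(\coprod_{i\in\Gamma}T(i)\big)/\simeq^{D}$ for the quotient map; it is continuous and surjective. Since a connected space is nonempty, each $T(i)\neq\emptyset$, and for each vertex $i$ the composite $T(i)\hookrightarrow\coprod_{i\in\Gamma}T(i)\xrightarrow{\,p\,}At^{\Gamma}(T,f)$ is continuous, so $P_{i}:=p(T(i))$ is a nonempty connected subset of $At^{\Gamma}(T,f)$; moreover $At^{\Gamma}(T,f)=\bigcup_{i\in\Gamma_{0}}P_{i}$ because $p$ is onto.

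First I would record the gluing fact: for every arrow $\alpha\colon i\to j$ of $\Gamma$ one has $P_{i}\cap P_{j}\neq\emptyset$. Indeed, choose any $x\in T(i)$; then $(x,f_{\alpha}(x))\in D$, so $p(x)=p(f_{\alpha}(x))$, and since $x\in T(i)$ and $f_{\alpha}(x)\in T(j)$ this point lies in $P_{i}\cap P_{j}$.

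Next I would run the standard component argument. Define a relation on $\Gamma_{0}$ by $i\sim j$ iff $P_{i}$ and $P_{j}$ lie in the same connected component of $At^{\Gamma}(T,f)$; this is an equivalence relation. By the gluing fact together with the elementary lemma that a union of two connected sets with nonempty intersection is connected, $i\sim j$ whenever $\Gamma$ has an arrow between $i$ and $j$ (in either direction). Since $\Gamma$ is connected, its underlying undirected graph is connected, so any two vertices are joined by a finite chain of such arrows; transitivity then yields $i\sim j$ for all $i,j\in\Gamma_{0}$. Fixing a vertex $i_{0}$ and letting $C$ be the connected component of $At^{\Gamma}(T,f)$ containing $P_{i_{0}}$, we get $P_{i}\subseteq C$ for all $i$, whence $At^{\Gamma}(T,f)=\bigcup_{i}P_{i}\subseteq C\subseteq At^{\Gamma}(T,f)$, so $At^{\Gamma}(T,f)=C$ is connected.

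No genuinely hard step arises: the only topological input is that a continuous image of a connected space is connected and that a chain of pairwise-intersecting connected sets has connected union. The points that need a little care are the convention that connected spaces are nonempty (needed so that each $P_{i}\neq\emptyset$, i.e.\ so that the gluing fact actually produces a common point) and the translation of ``$\Gamma$ connected'' into ``any two vertices are linked by an unoriented chain of arrows.'' The same scheme will later give the statement comparing the components of $\Gamma$ with those of $At^{\Gamma}(T,f)$.
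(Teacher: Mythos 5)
Your proof is correct, and it takes a slightly different route from the paper's. The paper argues by contradiction upstairs: it supposes a clopen separation $At^{\Gamma}(T,f)=X\cup Y$, pulls it back along the quotient map $p$, uses connectedness of each $T(i)$ to conclude that every $T(i)$ lies entirely in $p^{-1}(X)$ or entirely in $p^{-1}(Y)$, so the vertex set splits as $\Gamma_0=\Gamma'\cup\Gamma''$, and then uses connectedness of $\Gamma$ to find an arrow $i_1\to i_2$ crossing the split; the identification $\overline{x}=\overline{f_{i_1,i_2}(x)}$ then puts a point in $X\cap Y$, a contradiction. You instead argue directly downstairs: the images $P_i=p(T(i))$ are nonempty connected sets covering the quotient, each arrow forces $P_i\cap P_j\neq\emptyset$ via the same relation $p(x)=p(f_\alpha(x))$, and the chain lemma for overlapping connected sets together with connectedness of the underlying graph places all $P_i$ in one component. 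The combinatorial core (connected vertex pieces glued along arrows, chained by connectedness of $\Gamma$) is identical; what your version buys is a constructive statement that transparently feeds into the later comparison of components (Corollary \ref{Cor6.8}), while the paper's clopen-separation argument avoids invoking the chain lemma explicitly. You are also more explicit than the paper about the tacit hypothesis that each $T(i)$ is nonempty (the paper needs this too, when it selects $x\in T(i_1)$ and when it assigns vertices to $\Gamma'$ or $\Gamma''$), which is a point in your favor rather than a gap.
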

\begin{proof}
Suppose on the contrary, then $At^{\Gamma}(T,f)=X\cup Y,X\cap Y=\emptyset$ and $X,Y$ are open close subsets of $At^{\Gamma}(T,f)$.
Let $p$ be the natural map from $\coprod_{i\in\Gamma}T(i)$ to $At^{\Gamma}(T,f)$.

Since $p$ is surjective,
$P^{-1}(X),p^{-1}(Y)$ are nonempty and open close subsets of $\coprod_{i\in\Gamma}T(i)$, and $\coprod_{i\in\Gamma}T(i)
=p^{-1}(X)\cup p^{-1}(Y),p^{-1}(X)\cap p^{-1}(Y)=\emptyset$.

 Since $T(i)$ are connected and $P^{-1}(X),p^{-1}(Y)$ are an open and close
subsets of $\coprod_{i\in\Gamma}T(i)$, we have $T(i)\subset P^{-1}(X)$ or $T(i)\cap P^{-1}(X)=\emptyset$,$T(i)\subset P^{-1}(Y)$ or $T(i)\cap P^{-1}(Y)=\emptyset$ for each $i\in\Gamma$.

Therefore, $p^{-1}(X)=\cup_{i\in\Gamma^{'}}T(i),p^{-1}(X)=\cup_{i\in\Gamma^{''}}T(i)$
where $point(\Gamma)=\Gamma^{'}\cup\Gamma^{''},\Gamma^{'}\cap\Gamma^{''}=\emptyset$.

Because $\Gamma$ is connected,
there exist $i_1\in\Gamma^{'},i_2\in\Gamma^{''}$ such that there is an arrow from $i_1$ to $i_2$ in $\Gamma$.
Select $x\in T(i_1)\subset p^{-1}(X)$, and then $f_{i_1,i_2}(x)\in T(i_2)\subset p^{-1}(Y)$. Thus $\overline{x}=p(x)=\overline{f_{i_1,i_2}(x)}=p(f_{i_1,i_2}(x))\in X\cap Y$. This is a contradiction, and so $At^{\Gamma}(T,f)$ is connected.

\end{proof}
\begin{Theorem}\label{The6.6}
Let $\Gamma$ be a quiver, $\Gamma=\Gamma^{'}\cup \Gamma^{''}$ where $\Gamma^{'}$ and $\Gamma^{''}$ are nontrivial sub-quivers of $\Gamma$ and $\Gamma^{'}\cap\Gamma^{''}=\emptyset$, and $(T,f)\in$ $\textbf{Top}\mathrm{-}\textbf{Rep}\Gamma$. Denote $(T^{'},f^{'})$ as a top-representation of $\Gamma^{'}$ such that
$T^{'}(i)=T(i), (f^{'})_{ij}=f_{ij}$ for any $i, j \in\Gamma^{'}$. Similarly, denote $(T^{''},f^{''})$ as a top-representation of $\Gamma^{''}$.
Then, $At^{\Gamma}(T,f)\simeq At^{\Gamma^{'}}(T^{'},f^{'})\coprod At^{\Gamma^{''}}(T^{''},f^{''})$.
\end{Theorem}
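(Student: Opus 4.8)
The plan is to unwind both sides of the claimed homeomorphism into quotients of coproducts and then exploit the hypothesis $\Gamma^{'}\cap\Gamma^{''}=\emptyset$: since there are no arrows of $\Gamma$ between the two halves, the defining relation from Definition \ref{Def6.2} cannot identify points across them, so the quotient splits as a coproduct.

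First I would record the combinatorial input. As $\Gamma=\Gamma^{'}\cup\Gamma^{''}$ is a decomposition of quivers with $\Gamma^{'}\cap\Gamma^{''}=\emptyset$, every arrow of $\Gamma$ lies entirely in $\Gamma^{'}$ or entirely in $\Gamma^{''}$; in particular no arrow of $\Gamma$ joins a vertex of $\Gamma^{'}$ to a vertex of $\Gamma^{''}$. Writing $X^{'}=\coprod_{i\in\Gamma^{'}}T(i)$ and $X^{''}=\coprod_{i\in\Gamma^{''}}T(i)$, this gives $\coprod_{i\in\Gamma}T(i)=X^{'}\coprod X^{''}$ in $\textbf{Top}$, and the set $D$ of Definition \ref{Def6.2} decomposes as a disjoint union $D=D^{'}\coprod D^{''}$ with $D^{'}\subseteq X^{'}\times X^{'}$ and $D^{''}\subseteq X^{''}\times X^{''}$, where $D^{'}$ (resp. $D^{''}$) is precisely the subset used to form $At^{\Gamma^{'}}(T^{'},f^{'})$ (resp. $At^{\Gamma^{''}}(T^{''},f^{''})$), because $T^{'}$, $f^{'}$ (resp. $T^{''}$, $f^{''}$) are the restrictions of $T$, $f$.

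Next I would pass to the generated equivalence relations using the explicit description in Remark \ref{Rem5.1}: $x_1\simeq^{D}x_2$ iff $(x_1,x_2)\in\widetilde{D}$ or there is a finite chain $(x_1,v_1),(v_1,v_2),\dots,(v_n,x_2)$ in $\widetilde{D}=D\cup D^{-1}\cup K$, where $K$ is the diagonal of $X^{'}\coprod X^{''}$. Since each of $D$, $D^{-1}$ and $K$ respects the partition $X^{'}\coprod X^{''}$, any $\simeq^{D}$-related pair lies in a single half, and the restriction of $\simeq^{D}$ to $X^{'}$ (resp. $X^{''}$) is exactly $\simeq^{D^{'}}$ (resp. $\simeq^{D^{''}}$). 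Hence there is a canonical bijection
$$\phi:\big(X^{'}/\!\simeq^{D^{'}}\big)\coprod\big(X^{''}/\!\simeq^{D^{''}}\big)\longrightarrow\big(X^{'}\coprod X^{''}\big)/\!\simeq^{D}=At^{\Gamma}(T,f),$$
which on $At^{\Gamma^{'}}(T^{'},f^{'})$ is the map induced by $X^{'}\hookrightarrow X^{'}\coprod X^{''}\xrightarrow{q}At^{\Gamma}(T,f)$ (constant on $\simeq^{D^{'}}$-classes by the previous sentence) and similarly on the other summand.

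Finally I would check that $\phi$ is a homeomorphism, which is the only genuinely topological point. Let $q$, $q^{'}$, $q^{''}$ be the quotient maps onto $At^{\Gamma}(T,f)$, $At^{\Gamma^{'}}(T^{'},f^{'})$, $At^{\Gamma^{''}}(T^{''},f^{''})$ respectively; then $q^{'}\coprod q^{''}$ is a quotient map (a coproduct of quotient maps is a quotient map) and $q=\phi\circ(q^{'}\coprod q^{''})$. For open $U$ in the target, $(q^{'}\coprod q^{''})^{-1}(\phi^{-1}(U))=q^{-1}(U)$ is open since $q$ is continuous, so $\phi^{-1}(U)$ is open since $q^{'}\coprod q^{''}$ is a quotient map; thus $\phi$ is continuous. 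Conversely, if $\phi^{-1}(V)$ is open then $q^{-1}(V)=(q^{'}\coprod q^{''})^{-1}(\phi^{-1}(V))$ is open, hence $V$ is open because $q$ is a quotient map; so $\phi$ is a quotient map, and being bijective it is a homeomorphism, giving $At^{\Gamma}(T,f)\simeq At^{\Gamma^{'}}(T^{'},f^{'})\coprod At^{\Gamma^{''}}(T^{''},f^{''})$. I expect the main obstacle to be purely bookkeeping — that finite coproducts commute with quotients and that $\overline{D}$ splits along the partition — rather than anything deep; a slightly slicker alternative is to build $\phi$ and $\phi^{-1}$ directly from the universal properties of the coproduct and quotient topologies and verify they are mutually inverse, bypassing the quotient-of-coproduct lemma.
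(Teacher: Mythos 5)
Your proposal is correct and in substance follows the same route as the paper: both arguments hinge on the observation that, since $\Gamma=\Gamma^{'}\cup\Gamma^{''}$ with $\Gamma^{'}\cap\Gamma^{''}=\emptyset$ forces every arrow to lie wholly in one half, the relation $D$ and hence the generated equivalence relation split along the partition $\coprod_{i\in\Gamma^{'}}T(i)\;\sqcup\;\coprod_{i\in\Gamma^{''}}T(i)$ (the chain argument with $\widetilde{C}$, $\overline{C}$ in the paper is exactly your "each of $D$, $D^{-1}$, $K$ respects the partition" step), so no points are identified across the two halves. The only difference is the finishing bookkeeping: the paper realizes $At^{\Gamma}(T,f)$ as the union of the two disjoint open images $X_1,X_2$ of the halves under the quotient map and then asserts that $X_1\simeq At^{\Gamma^{'}}(T^{'},f^{'})$ and $X_2\simeq At^{\Gamma^{''}}(T^{''},f^{''})$ are "easy to show", whereas you build the comparison bijection $\phi$ directly and prove it is a homeomorphism from the factorization $q=\phi\circ(q^{'}\coprod q^{''})$ together with the fact that a coproduct of quotient maps is a quotient map — a clean verification that actually supplies the detail the paper leaves implicit.
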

\begin{proof}
We know
\begin{equation}
\begin{split}
At^{\Gamma}(T,f)&=(\coprod_{i\in\Gamma}T(i))/\simeq^{C},C=\{(x_i,x_j)|x_i\in T(i);x_j\in T(j); f_{ij}(x_i)=x_j;\forall i\rightarrow j\in\Gamma\}\\
At^{\Gamma^{'}}(T^{'},f^{'})&=(\coprod_{i\in\Gamma^{'}}T(i))/\simeq^{C^{'}},C^{'}=\{(x_i,x_j)|x_i\in T(i);x_j\in T(j); f_{ij}(x_i)=x_j;\forall i\rightarrow j\in\Gamma^{'}\}\\
At^{\Gamma^{''}}(T^{''},f^{''})&=(\coprod_{i\in\Gamma^{''}}T(i))/\simeq^{C^{''}},C^{''}=\{(x_i,x_j)|x_i\in T(i);x_j\in T(j); f_{ij}(x_i)=x_j;\forall i\rightarrow j\in\Gamma^{''}\}.
\end{split}
\end{equation}

Since $\Gamma=\Gamma^{'}\cup \Gamma^{''}, \;\Gamma^{'}\cap\Gamma^{''}=\emptyset$, we have $C=C^{'}\cup C^{''}, \; C^{'}\cap C^{''}=\emptyset$.
Denote the natural map $p:\coprod_{i\in\Gamma}T(i)\rightarrow At^{\Gamma}(T,f),x\mapsto\overline{x}$.
Let $X_1=p(\cup_{i\in\Gamma^{'}}T(i)),X_2=p(\cup_{i\in\Gamma^{''}}T(i))$. Clearly, $At^{\Gamma}(T,f)=X_1\cup X_2$.
We will show $X_1\cap X_2=\emptyset$. For any $x_1\in\cup_{i\in\Gamma^{'}}T(i),x_2\in\cup_{i\in\Gamma^{''}}T(i)$,
$(x_1,x_2)$ is not in $\widetilde{C}$ since $\Gamma^{'}$ and $\Gamma^{''}$ are not connected.
Then for any $x_1\in\cup_{i\in\Gamma^{'}}T(i),x_2\in\cup_{i\in\Gamma^{''}}T(i)$, $(x_1,x_2)$ is not in $\overline{C}$, which implies $X_1\cap X_2=\emptyset$.

Suppose $(x_1,x_2)\in\overline{C}$. There exist $v_1,v_2,\dots,v_n\in\coprod_{i\in\Gamma}T(i)=\cup_{i\in\Gamma^{'}}T(i)\cup(\cup_{i\in\Gamma^{''}}T(i))$
such that $(x_1,v_1),(v_1,v_2),\dots,(v_n,x_2)\in\widetilde{C}$. Since $x_1\in\cup_{i\in\Gamma^{'}}T(i),
x_2\in\cup_{i\in\Gamma^{''}}T(i)$, we  have $(x,y)\in\widetilde{C}$ where $x\in\cup_{i\in\Gamma^{'}}T(i),
y\in\cup_{i\in\Gamma^{''}}T(i)$, which is a contradiction.

It is easy to show  $p^{-1}(X_1)=\cup_{i\in\Gamma^{'}}T(i),p^{-1}(X_2)=\cup_{i\in\Gamma^{''}}T(i)$.
Since $\cup_{i\in\Gamma^{'}}T(i),\cup_{i\in\Gamma^{''}}T(i)$ are open sets of $\cup_{i\in\Gamma}T(i)$,
we have that $X_1,X_2$ are open sets of $At^{\Gamma}(T,f)$ and $At^{\Gamma}(T,f)=X_1\cup X_2,X_1\cap X_2=\emptyset$.
Thus, $At^{\Gamma}(T,f)\simeq X_1\coprod X_2$.

It suffices to show $X_1\simeq At^{\Gamma^{'}}(T^{'},f^{'}),X_2\simeq At^{\Gamma^{''}}(T^{''},f^{''})$.
Define $$f:At^{\Gamma^{'}}(T^{'},f^{'})\rightarrow X_1,\;\;\; \text{via}\;\;\; \overline{x_1}\mapsto\overline{x_1};\;\;\;\text{and }\;\;\;
g:At^{\Gamma^{''}}(T^{''},f^{''})\rightarrow X_2,\;\;\; \text{via}\;\;\; \overline{x_2}\mapsto\overline{x_2}.$$ It is easy to show
$f,g$ are well-defined homeomorphisms. This completes the proof.

\end{proof}

\begin{Corollary}\label{Cor6.7}
Let $\Gamma$ be a quiver, and $(\Gamma^{k})_{1\leq k\leq n}$ be all components of $\Gamma$.
Denote $(T^{k},f^{k})$ as a top-representation of $\Gamma^{k}$ such that
$T^{k}(i)=T(i), (f^{k})_{ij}=f_{ij}$ for any $i, j \in\Gamma^{k}$.
Then, $At^{\Gamma}(T,f)\simeq\coprod_{1\leq k\leq n}At^{\Gamma^{k}}(T^{k},f^{k})$.
\end{Corollary}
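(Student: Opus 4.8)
The plan is to argue by induction on the number $n$ of connected components of $\Gamma$, using Theorem \ref{The6.6} as the single non-trivial input; everything else is bookkeeping about restrictions and the associativity of disjoint union.

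For the base case $n=1$ the quiver $\Gamma$ is itself connected, so $\Gamma^1=\Gamma$, $(T^1,f^1)=(T,f)$, and the claim $At^{\Gamma}(T,f)\simeq At^{\Gamma^1}(T^1,f^1)$ is just the identity map. For the inductive step I would assume the statement for every quiver with at most $n-1$ components and suppose $\Gamma$ has components $\Gamma^1,\dots,\Gamma^n$ with $n\geq2$. Set $\Gamma'=\Gamma^1\cup\cdots\cup\Gamma^{n-1}$ and $\Gamma''=\Gamma^n$. Since distinct components are pairwise disjoint, $\Gamma'$ and $\Gamma''$ are nontrivial sub-quivers of $\Gamma$ with $\Gamma=\Gamma'\cup\Gamma''$ and $\Gamma'\cap\Gamma''=\emptyset$, so Theorem \ref{The6.6} applies and yields
$$At^{\Gamma}(T,f)\simeq At^{\Gamma'}(T',f')\coprod At^{\Gamma''}(T'',f''),$$
where $(T',f')$ and $(T'',f'')$ are the restrictions of $(T,f)$ to $\Gamma'$ and to $\Gamma''$ respectively. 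By construction $(T'',f'')=(T^n,f^n)$, hence $At^{\Gamma''}(T'',f'')=At^{\Gamma^n}(T^n,f^n)$.

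Next I would observe that $\Gamma'$ is a quiver whose connected components are exactly $\Gamma^1,\dots,\Gamma^{n-1}$, and that restricting $(T',f')$ further to $\Gamma^k\subseteq\Gamma'$ returns $(T^k,f^k)$ for $1\leq k\leq n-1$ (restriction of a restriction is a restriction, directly from Definition \ref{Def2.1}). Thus the inductive hypothesis gives $At^{\Gamma'}(T',f')\simeq\coprod_{1\leq k\leq n-1}At^{\Gamma^k}(T^k,f^k)$. Substituting this into the previous display and using associativity of $\coprod$ gives $At^{\Gamma}(T,f)\simeq\coprod_{1\leq k\leq n}At^{\Gamma^k}(T^k,f^k)$, which closes the induction.

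There is essentially no obstacle in this argument; the only points deserving a word of care are that Theorem \ref{The6.6} requires $\Gamma'$ and $\Gamma''$ to be \emph{nontrivial} sub-quivers — which is automatic once each component $\Gamma^k$ is nonempty, any empty component contributing nothing and being harmlessly discarded — and the verification that the restriction operations compose correctly, so that the $(T^k,f^k)$ appearing after the two applications of Theorem \ref{The6.6} (one direct, one via the inductive hypothesis) are literally the same top-representations named in the statement.
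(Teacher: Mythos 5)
Your proof is correct and follows essentially the route the paper intends: induction on the number of components, with Theorem \ref{The6.6} supplying the two-piece decomposition at each step (this is exactly the pattern the paper spells out for the analogous Corollary \ref{Cor4.21}). Your extra remarks on nontriviality of the sub-quivers and compatibility of restrictions are harmless refinements of the same argument.
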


\begin{Corollary}\label{Cor6.8}

Let $\Gamma$ be a quiver, and $(T,f)\in\textbf{Top}\mathrm{-}\textbf{Rep}\Gamma$ with all $T(i)$ being connected.
Then the number of components of $\Gamma$ is equal to the number of components of $At(T,f)$.
\end{Corollary}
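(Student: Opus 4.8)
The plan is to deduce this directly from Corollary \ref{Cor6.7} together with the connectivity statement of Theorem \ref{The6.5}. Write $\Gamma^1,\dots,\Gamma^n$ for the components of $\Gamma$, so that $n$ is by definition the number of components of $\Gamma$, and let $(T^k,f^k)$ be the restriction of $(T,f)$ to $\Gamma^k$ as in Corollary \ref{Cor6.7}. That corollary supplies a homeomorphism $At^{\Gamma}(T,f)\simeq\coprod_{k=1}^{n}At^{\Gamma^k}(T^k,f^k)$, and since a homeomorphism preserves the number of connected components, it suffices to count the components of the right-hand side.

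Next I would check that each summand $At^{\Gamma^k}(T^k,f^k)$ is a nonempty connected space. Connectedness is exactly Theorem \ref{The6.5}: $\Gamma^k$ is a connected quiver (being a component), and every $T^k(i)=T(i)$ is connected by hypothesis, so $At^{\Gamma^k}(T^k,f^k)$ is connected. Nonemptiness follows since a component of a quiver contains at least one vertex $i$, the space $T(i)$ is nonempty (a connected space being nonempty in the convention used here), and the canonical quotient map $\coprod_{i\in\Gamma^k}T(i)\to At^{\Gamma^k}(T^k,f^k)$ is surjective; hence the quotient is nonempty.

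Finally, a topological space that is the free union of finitely many nonempty connected spaces has exactly that many connected components: each summand is open and closed and connected, hence constitutes a single component, and distinct summands lie in distinct components. Applying this to $\coprod_{k=1}^{n}At^{\Gamma^k}(T^k,f^k)$ shows that this space has exactly $n$ components, and therefore so does $At^{\Gamma}(T,f)$, which is precisely $n$, the number of components of $\Gamma$.

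I do not expect a serious obstacle here; the only point requiring care is the bookkeeping around emptiness — one needs that a component of $\Gamma$ is nonempty and that the ``connected'' spaces $T(i)$ are nonempty — because if empty $T(i)$ were permitted a component of $\Gamma$ could fail to produce a component of $At^{\Gamma}(T,f)$ and the count would break. Everything else is a formal consequence of Corollary \ref{Cor6.7}, Theorem \ref{The6.5}, and the elementary behaviour of connected components under homeomorphisms and finite coproducts.
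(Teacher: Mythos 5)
Your proposal is correct and follows essentially the same route as the paper, whose proof simply cites Theorem \ref{The6.5} and Corollary \ref{Cor6.7}; your extra care about nonemptiness of the pieces and the component count of a finite disjoint union just fills in the details the paper leaves implicit.
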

\begin{proof}
Theorem \ref{The6.5} and Corollary \ref{Cor6.7}.
\end{proof}

\begin{Theorem}\label{The6.9}
Let $\Gamma$ be a quiver, $(T,f)\in\textbf{Top}\mathrm{-}\textbf{Rep}\Gamma$, and $(T^{'},f),(T^{''},f)$ be sub-representations
of $(T,f)$ such that $T(i)=T^{'}(i)\cup T^{''}(i),T^{'}(i)\cap T^{''}(i)=\emptyset$ for all $i\in\Gamma$. Let $L_1:(T^{'},f)\rightarrow(T,f)$
and $L_2:(T^{''},f)\rightarrow (T,f)$ be inclusions. Then,

(i)\;  $At^{\Gamma}(L_1)$ and $At^{\Gamma}(L_2)$ are injective
and $$At^{\Gamma}(T,f)=ImAt^{\Gamma}(L_1)\cup ImAt^{\Gamma}(L_2),\;\;ImAt^{\Gamma}(L_1)\cap ImAt^{\Gamma}(L_2)=\emptyset.$$

(ii)\; Furthermore, $At^{\Gamma}(T^{'},f)\simeq ImAt^{\Gamma}(L_1)$ if $T^{'}(i)$ is an open subset of $T(i)$ for each $i\in\Gamma$.
\end{Theorem}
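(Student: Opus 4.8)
The plan is to exploit the disjointness $T(i)=T'(i)\sqcup T''(i)$ together with the sub-representation conditions $f_{ij}(T'(i))\subseteq T'(j)$ and $f_{ij}(T''(i))\subseteq T''(j)$ in order to show that the relation $D$ defining $At^{\Gamma}(T,f)$ splits, as a subset of the square, into the disjoint union of the relations $D'$ and $D''$ defining $At^{\Gamma}(T',f)$ and $At^{\Gamma}(T'',f)$; consequently the generated equivalence $\simeq^{D}$ never relates a point of $\coprod_{i}T'(i)$ with a point of $\coprod_{i}T''(i)$.

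Concretely, I would set $X=\coprod_{i\in\Gamma}T(i)$, $X'=\coprod_{i\in\Gamma}T'(i)$, $X''=\coprod_{i\in\Gamma}T''(i)$, so that $X=X'\sqcup X''$, and let $p:X\to At^{\Gamma}(T,f)$ denote the quotient map. With $D,D',D''$ the subsets of $X\times X$, $X'\times X'$, $X''\times X''$ from Definition \ref{Def6.2} associated with $(T,f)$, $(T',f)$, $(T'',f)$, the sub-representation hypotheses give $D=D'\sqcup D''$, and hence, in the notation of Remark \ref{Rem5.1}, $\widetilde{D}=\widetilde{D'}\sqcup\widetilde{D''}$ with $\widetilde{D'}\subseteq X'\times X'$ and $\widetilde{D''}\subseteq X''\times X''$. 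Therefore a $\widetilde{D}$-chain starting at a point of $X'$ must consist entirely of $\widetilde{D'}$-pairs: its first pair cannot lie in $\widetilde{D''}\subseteq X''\times X''$, so it lies in $\widetilde{D'}$, its endpoint is again in $X'$, and one repeats. It follows that $X'$ and $X''$ are $\overline{D}$-saturated, that no point of $X'$ is $\overline{D}$-equivalent to a point of $X''$, and that $\overline{D}$ restricted to $X'\times X'$ coincides with $\overline{D'}$ (and similarly for $X''$). This yields (i): by Theorem \ref{The6.3}, $At^{\Gamma}(L_1)$ is the continuous map sending the $\simeq^{D'}$-class of $x\in X'$ to $p(x)$, which is injective because $\overline{D}|_{X'\times X'}=\overline{D'}$; moreover $\mathrm{Im}\,At^{\Gamma}(L_1)=p(X')$ and $\mathrm{Im}\,At^{\Gamma}(L_2)=p(X'')$, so their union is $p(X)=At^{\Gamma}(T,f)$ and their intersection is empty, since $X'$ and $X''$ are $\overline{D}$-saturated and $\overline{D}$-unrelated.

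For (ii), the assumption that each $T'(i)$ is open in $T(i)$ makes $X'$ an open subset of $X$; being also $\overline{D}$-saturated, $p(X')=\mathrm{Im}\,At^{\Gamma}(L_1)$ is open in $At^{\Gamma}(T,f)$, and $p|_{X'}:X'\to p(X')$ is again a quotient map, being the restriction of a quotient map to a saturated open set. Its fibres are exactly the $\overline{D'}$-classes (using saturation together with $\overline{D}|_{X'\times X'}=\overline{D'}$), so by the universal property of the quotient topology it induces a homeomorphism from $X'/\simeq^{D'}=At^{\Gamma}(T',f)$ onto $p(X')$; tracing through the definitions, this homeomorphism is precisely $At^{\Gamma}(L_1)$ corestricted to its image, whence $At^{\Gamma}(T',f)\simeq\mathrm{Im}\,At^{\Gamma}(L_1)$.

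I expect the only delicate point to be the chain-chasing in (i): one must ensure that a $\widetilde{D}$-chain, which may use backward ($D^{-1}$) steps, cannot escape from $X'$ into $X''$ — this is exactly what the decomposition $\widetilde{D}=\widetilde{D'}\sqcup\widetilde{D''}$ buys us, and it in turn rests on $X''$ being forward-invariant under all the $f_{ij}$ and disjoint from $X'$. Once that containment is established, everything else is formal, invoking Theorem \ref{The6.3} for continuity of the $At^{\Gamma}(L_k)$ and standard facts about quotient topologies.
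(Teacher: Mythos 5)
Your proposal is correct and follows essentially the same route as the paper: the same splitting $D=D'\sqcup D''$, hence $\widetilde{D}=\widetilde{D'}\sqcup\widetilde{D''}$ and $\overline{D}=\overline{D'}\sqcup\overline{D''}$ via the chain-chasing argument using forward-invariance of $T'(i)$ and $T''(i)$, which gives injectivity, the union decomposition and the empty intersection in (i). The only cosmetic difference is in (ii), where the paper verifies directly that $At^{\Gamma}(L_1)$ is an open map by showing $p^{-1}\bigl(At^{\Gamma}(L_1)(\overline{X}^{'})\bigr)=(p^{'})^{-1}(\overline{X}^{'})$, while you invoke the standard fact that the restriction of a quotient map to a saturated open subset is again a quotient map; the two arguments are interchangeable.
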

\begin{proof}
We know
\begin{equation}
\begin{split}
At^{\Gamma}(T,f)&=(\coprod_{i\in\Gamma}T(i))/\simeq^{C},C=\{(x_i,x_j)|x_i\in T(i);x_j\in T(j); f_{ij}(x_i)=x_j;\forall i\rightarrow j\in\Gamma\}\\
At^{\Gamma}(T^{'},f)&=(\coprod_{i\in\Gamma}T^{'}(i))/\simeq^{C^{'}},C^{'}=\{(x_i,x_j)|x_i\in T^{'}(i);x_j\in T^{'}(j); f_{ij}(x_i)=x_j;\forall i\rightarrow j\in\Gamma\}\\
At^{\Gamma}(T^{''},f)&=(\coprod_{i\in\Gamma} T^{''}(i))/\simeq^{C^{''}},C^{''}=\{(x_i,x_j)|x_i\in T^{''}(i);x_j\in T^{''}(j); f_{ij}(x_i)=x_j;\forall i\rightarrow j\in\Gamma\}.
\end{split}
\end{equation}

We first show $C=C^{'}\cup C^{''},C^{'}\cap C^{''}=\emptyset;\widetilde{C}=\widetilde{C^{'}}\cup \widetilde{C^{''}},\widetilde{C^{'}}\cap \widetilde{C^{''}}=\emptyset;\overline{C}=\overline{C^{'}}\cup \overline{C^{''}},\overline{C^{'}}\cap \overline{C^{''}}=\emptyset$

Since $(\cup_{i\in\Gamma}T^{'}(i))\cap(\cup_{i\in\Gamma}T^{''}(i))=\emptyset$, $C^{'}\cap C^{''}=\emptyset$.
For any
$(x_i,x_j)\in C$, we have $x_i\in T(i)=T^{'}(i)\cup T^{''}(i),x_j\in T(j), f_{ij}(x_i)=x_j$.
If $x_i\in T^{'}(i)$, then we have $f_{ij}(x_i)=x_j\in T^{'}(j)$. Therefore, $(x_i,x_j)\in C^{'}$.
Similarly, if $x_i\in T^{''}(i)$, we have $(x_i,x_j)\in C^{''}$. Thus, $C=C^{'}\cup C^{''}$.

Since $(\cup_{i\in\Gamma}T^{'}(i))\cap(\cup_{i\in\Gamma}T^{''}(i))=\emptyset$, we have $\widetilde{C^{'}}\cap \widetilde{C^{''}}=\emptyset$.
One then has $C^{-1}=(C^{'})^{-1}\cup (C^{''})^{-1}$ due to that $C=C^{'}\cup C^{''}$. Because  $\coprod_{i\in\Gamma}T(i)=\coprod_{i\in\Gamma}T^{'}(i)\cup
\coprod_{i\in\Gamma}T^{''}(i)$, we obtain  $K=K^{'}\cup K^{''}$ where $K^{'}=\{(x,x)|x\in\coprod_{i\in\Gamma}T^{'}(i)\},K=\{(x,x)|x\in\coprod_{i\in\Gamma}T(i)\},K^{''}=\{(x,x)|x\in\coprod_{i\in\Gamma}T^{''}(i)\}$.
Thus $\widetilde{C}=C\cup C^{-1}\cup K=C^{'}\cup C^{''}\cup (C^{'})^{-1}\cup(C^{''})^{-1}\cup K^{'}\cup K^{''}
=\widetilde{C^{'}}\cup \widetilde{C^{''}},$

Since $(\cup_{i\in\Gamma}T^{'}(i))\cap(\cup_{i\in\Gamma}T^{''}(i))=\emptyset$, we have $\overline{C^{'}}\cap \overline{C^{''}}=\emptyset$.
For all $(x_1,x_2)\in\overline{C}$, there exist $v_1,v_2,\dots,v_n\in\coprod_{i\in\Gamma}T(i)$ such that $(x_1,v_1),(v_1,v_2),\dots,(v_n,x_2)\in\widetilde{C}$. If $x_1\in\coprod_{i\in\Gamma}T^{'}(i)$, then $(x_1,v_1)$ is not
in $\widetilde{C^{''}}$. Furthermore $(x_1,v_1)\in\widetilde{C}=\widetilde{C^{'}}\cup \widetilde{C^{''}}$, thus $(x_1,v_1)\in\widetilde{C^{'}}$.
We also have $v_1\in\coprod_{i\in\Gamma}T^{'}(i)$.
 Similarly, $(v_1,v_2)$ is not
in $\widetilde{C^{''}}$, and $(v_1,v_2)\in\widetilde{C}=\widetilde{C^{'}}\cup \widetilde{C^{''}}$, thus $(v_1,v_2)\in\widetilde{C^{'}}$
and $v_2\in\coprod_{i\in\Gamma}T^{'}(i)$. This process can be continued finitely, and eventually we have $(x_1,v_1),(v_1,v_2),\dots,(v_n,x_2)\in\widetilde{C^{'}}$.  It follows that $(x_1,x_2)\in\overline{C^{'}}$.
If $x_1\in\coprod_{i\in\Gamma}T^{''}(i)$, similarly, we can prove $(x_1,x_2)\in\overline{C^{''}}$. Thus $\overline{C}=\overline{C^{'}}\cup \overline{C^{''}}$.\\

{\bf The proof of (i):}

For $x\in\coprod_{i\in\Gamma}T(i)$, let $\overline{x}$ denote the equivalent class in $At^{\Gamma}(T,f)$ represented by $x$.
For $x\in\coprod_{i\in\Gamma}T^{'}(i)$, let $\overline{x}^{'}$ denote the equivalent class in $At^{\Gamma}(T^{'},f)$ represented by $x$.
Let $p$ be the natural map from $\coprod_{i\in\Gamma}T(i)$ to $At^{\Gamma}(T,f)$, and $p^{'}$ be the natural map from $\coprod_{i\in\Gamma}T^{'}(i)$ to $At^{\Gamma}(T^{'},f)$.

Look at the continuous map $At^{\Gamma}(L_1):At^{\Gamma}(T^{'},f)\rightarrow At^{\Gamma}(T,f)$.
For any $\overline{x_1}^{'},\overline{x_2}^{'}\in At^{\Gamma}(T^{'},f)$,
if $At^{\Gamma}(L_1)(\overline{x_1}^{'})=\overline{x_1}=At^{\Gamma}(L_1)(\overline{x_2}^{'})=\overline{x_2}$,
then $(x_1,x_2)\in\overline{C}=\overline{C^{'}}\cup\overline{C^{''}}$.
Since $x_1,x_2\in\coprod_{i\in\Gamma}T^{'}(i)$, we have that $(x_1,x_2)$ is not in $\overline{C^{''}}$.
Thus, $(x_1,x_2)\in\overline{C^{'}}$, that is, $\overline{x_1}^{'}=\overline{x_2}^{'}$. Therefore, $At^{\Gamma}(L_1)$ is
injective. Similarly, we can prove $At^{\Gamma}(L_2)$ is injective.

Since $\coprod_{i\in\Gamma}T(i)=\coprod_{i\in\Gamma}T^{'}(i)\cup\coprod_{i\in\Gamma}T^{''}(i)$, we obtain  $At^{\Gamma}(T,f)=ImAt^{\Gamma}(L_1)\cup ImAt^{\Gamma}(L_2)$.
We want to show $ImAt^{\Gamma}(L_1)\cap ImAt^{\Gamma}(L_2)=\emptyset$.

 Suppose on the contrary, then
there exist $x_1\in\coprod_{i\in\Gamma}T^{'}(i),x_2\in\coprod_{i\in\Gamma}T^{''}(i)$ such that
$\overline{x_1}=\overline{x_2}$, that is, $(x_1,x_2)\in\overline{C}=\overline{C^{'}}\cup\overline{C^{''}}$.
However, $x_1\in\coprod_{i\in\Gamma}T^{'}(i),x_2\in\coprod_{i\in\Gamma}T^{''}(i)$. Then we have  $(x_1,x_2)\not\in\overline{C^{'}}$
and $(x_1,x_2)\not\in\overline{C^{''}}$, which is a contradiction.

Thus, we obtain $ImAt^{\Gamma}(L_1)\cap ImAt^{\Gamma}(L_2)=\emptyset$.\\

{\bf The proof of (ii):}

If $T^{'}(i)$ is an open subset of $T(i)$ for each $i\in\Gamma$. In order to show $At^{\Gamma}(T^{'},f)\simeq ImAt^{\Gamma}(L_1)$,
it suffices to show $At^{\Gamma}(L_1):At^{\Gamma}(T^{'},f)\rightarrow ImAt^{\Gamma}(L_1)$ is an open map.
For any  open subset $\overline{X}^{'}$ of $At^{\Gamma}(T^{'},f)$, $(p^{'})^{-1}(\overline{X}^{'})$ is an open subset of
$\coprod_{i\in\Gamma}T^{'}(i)$. We will prove $p^{-1}(At^{\Gamma}(L_1)(\overline{X}^{'}))=(p^{'})^{-1}(\overline{X}^{'})$.

For all $x\in p^{-1}(At^{\Gamma}(L_1)(\overline{X}^{'}))$, we have $p(x)\in At^{\Gamma}(L_1)(\overline{X}^{'})$.
Then, there exists $\overline{x_1}^{'}\in\overline{X}^{'}$ such that $\overline{x}=At^{\Gamma}(L_1)(\overline{x_1}^{'})=\overline{x_1}$.
Thus $(x,x_1)\in\overline{C}=\overline{C^{'}}\cup \overline{C^{''}}$. $x_1\in\coprod_{i\in\Gamma}T^{'}(i)$,
and thus $(x,x_1)\in\overline{C^{'}},x\in\coprod_{i\in\Gamma}T^{'}(i)$. Hence we have $p^{'}(x)=\overline{x}^{'}=\overline{x_1}^{'}\in\overline{X}^{'}$.
Therefore, $x\in(p^{'})^{-1}(\overline{X}^{'})$ and $p^{-1}(At^{\Gamma}(L_1)(\overline{X}^{'}))\subset(p^{'})^{-1}(\overline{X}^{'})$

For all $x\in(p^{'})^{-1}(\overline{X}^{'})$,  we have $p^{'}(x)=\overline{x}^{'}\in\overline{X}^{'}$.
Thus, $At^{\Gamma}(L_1)(\overline{x}^{'})=\overline{x}=p(x)\in At^{\Gamma}(L_1)(\overline{X}^{'})$.
It implies that $x\in p^{-1}(At^{\Gamma}(L_1)(\overline{X}^{'}))$ and $(p^{'})^{-1}(\overline{X}^{'})\subset p^{-1}(At^{\Gamma}(L_1)(\overline{X}^{'}))$

Thus $p^{-1}(At^{\Gamma}(L_1)(\overline{X}^{'}))=(p^{'})^{-1}(\overline{X}^{'})$ is an open subset of
$\coprod_{i\in\Gamma}T^{'}(i)$. Since $\coprod_{i\in\Gamma}T^{'}(i)$ is an open subset of
$\coprod_{i\in\Gamma}T(i)$, it follows that $p^{-1}(At^{\Gamma}(L_1)(\overline{X}^{'}))$ is an open subset
of $\coprod_{i\in\Gamma}T(i)$. Then $At^{\Gamma}(L_1)(\overline{X}^{'})$ is an open subset
of $At^{\Gamma}(T,f)$. This implies that $At^{\Gamma}(L_1)(\overline{X}^{'})=At^{\Gamma}(L_1)(\overline{X}^{'})\cap ImAt^{\Gamma}(L_1)$
is an open subset of $ImAt^{\Gamma}(T,f)$, which means $At^{\Gamma}(L_1):At^{\Gamma}(T^{'},f)\rightarrow ImAt^{\Gamma}(L_1)$ is an open map.
This completes the proof.

\end{proof}

Let $\Gamma$ be a quiver. We have already known that $SAt^{\Gamma}, \mathop{lim^{\Gamma}}\limits^{\bullet}$ are two functors from $\textbf{Top}\mathrm{-}\textbf{Rep}\Gamma$
to $\textbf{C}$

\begin{Theorem}\label{The6.10}
Let $\Gamma$ be a quiver with finite components. There exists a natural transformation $\sigma:\mathop{lim^{\Gamma}}\limits^{\bullet}\rightarrow SAt^{\Gamma}$.
\end{Theorem}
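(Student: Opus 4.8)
The plan is to construct, for each top-representation $(T,f)$, a natural chain map $\sigma_{(T,f)} : \mathop{lim^{\Gamma}}\limits^{\bullet}(T,f) \to S(At^{\Gamma}(T,f))$, and then check naturality in $(T,f)$. The starting point is the observation that an element of $lim^{\Gamma}(S^{\Gamma}_n(T,f))$ is a tuple $(\alpha_i)_{i\in\Gamma}$ of singular $n$-simplices $\alpha_i : \Delta^n \to T(i)$ satisfying $f_{ij}\circ\alpha_i = \alpha_j$ whenever $i\to j$ in $\Gamma$. Let $p : \coprod_{i\in\Gamma} T(i) \to At^{\Gamma}(T,f)$ denote the quotient map. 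I would send $(\alpha_i)_{i\in\Gamma}$ to the singular simplex $p\circ \iota_i \circ \alpha_i : \Delta^n \to At^{\Gamma}(T,f)$, where $\iota_i$ is the inclusion of $T(i)$ into the disjoint union. The compatibility condition $f_{ij}\alpha_i = \alpha_j$ together with the definition of the relation $\simeq^D$ (which identifies $x_i$ with $f_{ij}(x_i)$) guarantees that $p\iota_i\alpha_i = p\iota_j\alpha_j$ in $At^{\Gamma}(T,f)$ whenever there is an arrow $i\to j$; so at least on each connected component of $\Gamma$ the chosen representative does not depend on which vertex $i$ we read off. (Here the finiteness of the number of components, hypothesized in the statement, is what lets us make a consistent global choice across components; a fixed component yields a well-defined simplex by connectedness.) Extending $\mathbb Z$-linearly gives $(\sigma_{(T,f)})_n : (\mathop{lim^{\Gamma}}\limits^{\bullet}(T,f))_n \to S_n(At^{\Gamma}(T,f))$.

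Next I would verify that $\sigma_{(T,f)}$ is a chain map. This is essentially formal: the boundary operator $\partial_n$ on $S_n$ is defined by precomposition with the face maps $\varepsilon_i^n$, and precomposition commutes with the post-composition $p\iota_i(-)$; since the boundary on $\mathop{lim^{\Gamma}}\limits^{\bullet}(T,f)$ is the restriction of the product boundary $\prod_i \partial_n$, and each coordinate $\partial_n \alpha_i = \sum (-1)^k \alpha_i \varepsilon_k^n$ maps under $\sigma$ to $\sum (-1)^k (p\iota_i\alpha_i)\varepsilon_k^n = \partial_n(\sigma$ of the tuple$)$, commutativity with $\partial$ follows. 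Then for naturality, given a morphism $\beta : (T,f) \to (T',f')$ in $\textbf{Top}\mathrm{-}\textbf{Rep}\Gamma$, I must check that the square relating $\sigma_{(T,f)}$, $\sigma_{(T',f')}$, $\mathop{lim^{\Gamma}}\limits^{\bullet}(\beta)$ and $S(At^{\Gamma}(\beta))$ commutes. Chasing a generator $(\alpha_i)_{i\in\Gamma}$: along one route we get $p'\iota'_i(\beta_i\alpha_i)$; along the other we get $At^{\Gamma}(\beta)\circ(p\iota_i\alpha_i)$, which by the defining formula $At^{\Gamma}(\beta)(\overline{x}) = \overline{\beta_i(x)}$ from Definition \ref{Def6.2} equals $p'\iota'_i\beta_i\alpha_i$ as well. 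So the square commutes on generators, hence everywhere, and $\sigma$ is natural.

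The main obstacle I anticipate is \textbf{well-definedness of the representative choice}, i.e.\ confirming that $p\iota_i\alpha_i$ is genuinely independent of $i$, not merely across a single arrow but across the whole equivalence closure $\overline{D}$ of $D$ — and handling the case where $\Gamma$ has several components, where the value is read off separately on each. I would address this by invoking Remark \ref{Rem5.1}: the relation generated by $D$ is the transitive-symmetric-reflexive closure $\overline{D}$, and the compatibility conditions $f_{ij}\alpha_i = \alpha_j$ say exactly that $(\iota_i\alpha_i(t), \iota_j\alpha_j(t)) \in D$ for every $t\in\Delta^n$ and every arrow $i\to j$; iterating along paths and their reverses in (the underlying graph of) a component shows $(\iota_i\alpha_i(t), \iota_j\alpha_j(t))\in\overline{D}$ for all $i,j$ in the same component, so $p\iota_i\alpha_i = p\iota_j\alpha_j$ there. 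Combined with Theorem \ref{The6.6} / Corollary \ref{Cor6.7}, which splits $At^{\Gamma}$ over components, the global $\sigma$ assembles as a product over components. A secondary, purely bookkeeping point is continuity of the maps $p\iota_i\alpha_i$, which is immediate since each is a composite of continuous maps, so no real difficulty arises there.
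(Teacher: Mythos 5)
There is a genuine gap, and it sits exactly where the real work of the paper's proof lies. You describe an element of $lim^{\Gamma}(S^{\Gamma}_n(T,f))$ as a tuple of singular $n$-simplices $\alpha_i:\Delta^n\to T(i)$ with $f_{ij}\circ\alpha_i=\alpha_j$, and you define $\sigma$ on such tuples and ``extend $\mathbb{Z}$-linearly.'' But by Definition \ref{sequence} the coordinates of a limit element are arbitrary chains $\alpha_i\in S_n(T(i))$, and the coherence condition is $(f_{ij})_*(\alpha_i)=\alpha_j$ at the level of chains, which permits cancellation. The subgroup generated by coherent simplex-tuples is in general a proper subgroup of the limit, so your map is simply not defined on all of $(\mathop{lim^{\Gamma}}\limits^{\bullet}(T,f))_n$. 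The paper itself supplies a counterexample to your description: in Theorem \ref{The4.24}, with $T(i)=S^1$, $f_{ij}=\overline{f}$ the antipodal map and $\Gamma$ containing an odd oriented cycle, a coherent simplex-tuple would force $\overline{f}g=g$ for some simplex $g$, which is impossible since $\overline{f}$ is fixed-point free; yet the limit is nonzero, generated by tuples whose coordinates are chains of the form $g+\overline{f}g$. Your proposal never produces a value of $\sigma$ on such elements.

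The paper's proof handles precisely this point. It defines $(\sigma_{(T,f)})_n((\alpha_i)_{i\in\Gamma})=(p_i)_*(\alpha_i)$ for arbitrary coherent chain tuples and then proves the nontrivial fact that this is independent of $i$ (on a connected quiver), splitting into the cases $\alpha_j=0$ and $\alpha_j\neq0$ and using the key observation that $f_{ij}g_u=f_{ij}g_v$ forces $\overline{g_u}=\overline{g_v}$ in $At^{\Gamma}(T,f)$, so that all cancellations occurring under $(f_{ij})_*$ already occur under $(p_i)_*$. That combinatorial analysis of supports is the heart of the argument and is absent from your proposal; the parts you do carry out (the chain-map check, the naturality square via $At^{\Gamma}(\beta)(\overline{x})=\overline{\beta_i(x)}$, and the reduction to connected components via Corollary \ref{Cor6.7}) agree with the paper but are the routine parts. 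As a side remark, mere existence of a natural transformation could alternatively be obtained by fixing one vertex $i_0$ in each component and composing the coordinate projection with $(p_{i_0})_*$, bypassing the independence question at the cost of a non-canonical choice; but your argument as written neither does this nor establishes the independence needed for the construction you actually propose.
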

\begin{proof}
According to Corollary \ref{Cor6.7} and the proof of Theorem \ref{Lem4.20}, we can assume the quiver is connected.
For each $(T,f)\in\textbf{Top}\mathrm{-}\textbf{Rep}\Gamma$, we try to define a chain morphism
\begin{equation}
 \sigma_{(T,f)}:\mathop{lim^{\Gamma}}\limits^{\bullet}(T,f)
\rightarrow S(At^{\Gamma}(T,f))
\end{equation}
. From the definitions, we know that
\begin{equation}
(\mathop{lim^{\Gamma}}\limits^{\bullet}(T,f))_n=lim^{\Gamma}(S^{\Gamma}_n(T,f)),
(S(At^{\Gamma}(T,f)))_n=S_n(At^{\Gamma}(T,f)).
\end{equation}
Denote the natural map $p:\coprod_{i\in\Gamma}T(i)\rightarrow At^{\Gamma}(T,f),x\mapsto\overline{x}$.
For each $i\in\Gamma$, let $p_i$ denote the composition of the inclusion, from $T(i)$ to $\coprod_{i\in\Gamma}T(i)$, and $p$.

Let $X$ be any topological space, and $F_1,F_2$ be continuous maps from $X$ to $T(i)$, $F_3$ be a continuous map from
$X$ to $T(j)$(There exists an arrow from $i$ to $j$ in $\Gamma$). We first show that $p_iF_1=p_iF_2=p_jF_3$
if $f_{ij}F_1=f_{ij}F_2=F_3$. For all $x\in X$, $f_{ij}(F_1(x))=f_{ij}(F_2(x))=F_3(x)$. Thus, we have
$p_i(F_1(x))=p_i(F_2(x))=p_j(F_3(x))$.

For any $(\alpha_i)_{i\in\Gamma}\in lim^{\Gamma}(S^{\Gamma}_n(T,f))$. If there exists an arrow from $i$ to $j$,
we know $(f_{ij})_{*}(\alpha_i)=\alpha_j$. We will show $(p_i)_{*}(\alpha_i)=(p_j)_{*}(\alpha_j)$
($p_i:T(i)\rightarrow At^{\Gamma}(T,f)$, and thus $(p_{i})_{*}:S(T(i))\rightarrow S(At^{\Gamma}(T,f))$. We will
use $\overline{\alpha_i}$ to denote $(p_i)_{*}(\alpha_i)$).

$\uppercase\expandafter{\romannumeral1}$: $\alpha_j=0$. If $\alpha_i=0$, it is trivial $\overline{\alpha_i}=\overline{\alpha_j}$.

If $\alpha_i\neq 0$, suppose $\alpha_i=k_1g_1+\dots+k_ng_n$ where $n\in\mathds{N}^{+},k_t\in\mathds{Z}-\{0\}$ and $g_u\neq g_v$ if $u\neq v$.
Since $(f_{ij})_{*}(\alpha_i)=k_1f_{ij}g_1+\dots+k_nf_{ij}g_n=0$, we must have $\alpha_i=l_1(g_{u_1}-g_{v_1})+\dots+l_s(g_{u_s}-g_{v_s})$
where $f_{ij}g_{u_t}=f_{ij}g_{v_t},1\leq t\leq s$. From the above discussion, we know $\overline{g_{u_i}}=\overline{g_{v_i}},1\leq t\leq s$.
Thus
\begin{equation}
\overline{\alpha_i}=l_1(\overline{g_{u_1}}-\overline{g_{v_1}})+\dots+l_s(\overline{g_{u_s}}-\overline{g_{v_s}})=0=\overline{\alpha_j}
\end{equation}

$\uppercase\expandafter{\romannumeral2}$: $\alpha_j\neq0$.
Suppose
\begin{equation}
\alpha_i=k_1g_1+\dots+k_ng_n,\alpha_j=c_1h_1+\dots+c_mh_m,
\end{equation}
where $n,m\in\mathds{N}^{+},k_t,h_s\in\mathds{Z}-\{0\}$ and $g_u\neq g_v$ if $u\neq v$,$h_a\neq h_b$ if $a\neq b$.
Then
$(f_{ij})_{*}(\alpha_i)=k_1f_{ij}g_1+\dots+k_nf_{ij}g_n$. Although $g_u\neq g_v,u\neq v$, it is possible
 $f_{ij}g_u=f_{ij}g_v,u\neq v$. Thus
 \begin{equation}
 \begin{split}
 (f_{ij})_{*}(\alpha_i)&=k_1f_{ij}g_1+\dots+k_nf_{ij}g_n\\
                       &=l_1f_{ij}g_{t_1}+\dots+l_{\lambda} f_{ij}g_{t_{\lambda}}\\
                       &=c_1h_1+\dots+c_mh_m,
 \end{split}
 \end{equation}

 where $f_{ij}g_{t_1},\dots,f_{ij}g_{t_{\lambda}}$are distinct, $l_i\neq0$  and $\{t_1,\dots,t_{\lambda}\}$ is a subset of $\{1,2,\dots,n\}$.
 From above discussion, we know that $\overline{g_u}=\overline{g_v}$ if $f_{ij}g_u=f_{ij}g_v$. Then we have
 \begin{equation}
 \overline{\alpha_i}=k_1\overline{g_1}+\dots+k_n\overline{g_n}=l_1\overline{g_{t_1}}+\dots+l_{\lambda}\overline{g_{t_{\lambda}}}.
 \end{equation}
 Since $l_1f_{ij}g_{t_1}+\dots+l_{\lambda}f_{ij}g_{t_{\lambda}}=c_1h_1+\dots+c_mh_m$, $f_{ij}g_{t_s}\neq f_{ij}g_{t_w}$ if $s\neq w, h_a\neq h_b$ if $a\neq b$ $l_i\neq0,k_j\neq0$, then $\lambda=m$ and $\{f_{ij}g_{t_1},\dots,f_{ij}g_{t_{m}}\}=\{h_1\dots h_m\}$
 Then, there exists $\tau\in S_{m}$ such that $f_{ij}g_{t_{u}}=h_{\tau(u)},l_u=c_{\tau(u)}$ for all $1\leq u\leq m$.
 From the above discussion, we know $\overline{g_{t_{u}}}=\overline{h_{\tau(u)}}$ for all $1\leq u\leq m$.
 Therefore, we have
 \begin{equation}
 \begin{split}
 \overline{\alpha_i}&=k_1\overline{g_1}+\dots+k_n\overline{g_n}\\
                  &=l_1\overline{g_{t_1}}+\dots+l_{m}\overline{g_{t_{m}}}\\
                  &=c_1\overline{h_1}+\dots+c_m\overline{h_m}\\
                  &=\overline{\alpha_j}
 \end{split}
 \end{equation}

 From the above discussion, we know that for any $(\alpha_i)_{i\in\Gamma}\in lim^{\Gamma}(S^{\Gamma}_n(T,f))(n\geq0)$,
 $(p_i)_{*}(\alpha_i)=(p_j)_{*}(\alpha_j)$  if there exists an arrow from $i$ to $j$. $\Gamma$ is connected, and therefore
 $(p_i)_{*}(\alpha_i)=(p_j)_{*}(\alpha_j)\in S_n(At^{\Gamma}(T,f))$ for all $i,j\in\Gamma$.

 For each $n\geq0$, define
 \begin{equation}
 \begin{split}
  (\sigma_{(T,f)})_n:(\mathop{lim^{\Gamma}}\limits^{\bullet}(T,f))_n=lim^{\Gamma}(S^{\Gamma}_n(T,f))&\rightarrow (S(At^{\Gamma}(T,f)))_n=S_n(At^{\Gamma}(T,f))\\
  (\alpha_i)_{i\in\Gamma}&\mapsto (p_i)_{*}(\alpha_i)
  \end{split}
 \end{equation}
and we have $\partial((\sigma_{(T,f)})_n((\alpha_i)_{i\in\Gamma}))=\partial((p_i)_{*}(\alpha_i))=(p_i)_{*}(\partial(\alpha_i))=
(\sigma_{(T,f)})_{n-1}(((\partial(\alpha_i))_{i\in\Gamma})=(\sigma_{(T,f)})_{n-1}(\partial((\alpha_i)_{i\in\Gamma}))$.
Thus $ \sigma_{(T,f)}:\mathop{lim^{\Gamma}}\limits^{\bullet}(T,f)\rightarrow S(At^{\Gamma}(T,f))$ is a chain map.

For any $\theta:(T,f)\rightarrow(T^{'},f^{'})$, we will show the following diagram commutes:
  $$\xymatrix{
&\mathop{lim^{\Gamma}}\limits^{\bullet}(T,f)\ar[r]^{\sigma_{(T,f)}}\ar[d]_{\mathop{lim^{\Gamma}}\limits^{\bullet}(\theta)}&S(At^{\Gamma}(T,f))\ar[d]^{S(At^{\Gamma}(\theta))=
(At^{\Gamma}(\theta))_{*}}\\
&\mathop{lim^{\Gamma}}\limits^{\bullet}(T^{'},f^{'})\ar[r]^{\sigma_{(T^{'},f^{'})}}&S(At^{\Gamma}(T^{'},f^{'}))
}.$$
For all $n\geq0,(\alpha_i)_{i\in\Gamma}\in(\mathop{lim^{\Gamma}}\limits^{\bullet}(T,f))_n=lim^{\Gamma}(S^{\Gamma}_n(T,f))$,
\begin{equation}
\begin{split}
(\sigma_{(T,f)})_n((\alpha_i)_{i\in\Gamma})&=(p_i)_{*}(\alpha_i)\\
(At^{\Gamma}(\theta))_{*}((p_i)_{*}(\alpha_i))
&=(At^{\Gamma}(\theta)p_i)_{*}(\alpha_i)
\end{split}
\end{equation}
\begin{equation}
\begin{split}
\mathop{lim^{\Gamma}}\limits^{\bullet}(\theta)((\alpha_i)_{i\in\Gamma})&=((\theta_i)_{*}(\alpha_{i}))_{i\in\Gamma}\\
(\sigma_{(T^{'},f^{'})})_n(((\theta_i)_{*}(\alpha_{i}))_{i\in\Gamma})&=(p^{'}_{i})_{*}(((\theta_i)_{*}(\alpha_{i}))=
(p^{'}_{i}\theta_i)_{*}(\alpha_i),
\end{split}
\end{equation}
where $p^{'}_{i}:T^{'}(i)\rightarrow At^{\Gamma}(T^{'},f^{'})$, defined similarly as $p_{i}$.

It suffices to show $At^{\Gamma}(\theta)p_i=p^{'}_{i}\theta_i:T(i)\rightarrow At^{\Gamma}(T^{'},f^{'}),\forall i\in\Gamma$.
For all $x_i\in T(i)$,
\begin{equation}
At^{\Gamma}(\theta)(p_i(x_i))=At^{\Gamma}(\theta)(\overline{x_i})=\overline{\theta_i(x_i)}=p^{'}_{i}(\theta_i(x_i)).
\end{equation}
This shows the above diagram is commutative, which completes the proof.

\end{proof}

We know that $H_n(-),H_nAt^{\Gamma}(-)$ are two functors from $\textbf{Top}\mathrm{-}\textbf{Rep}\Gamma$
to $\textbf{Ab}$ for each $n\geq0$.

\begin{Corollary}\label{Cor6.11}
For any quiver $\Gamma$ with finite components, each $n\geq0$, we have a natural tranformation from $H_n(-)$ to $H_nAt^{\Gamma}(-)$.
\end{Corollary}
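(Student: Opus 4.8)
The plan is to obtain the desired natural transformation by applying the homology functor $H_n\colon\textbf{C}\rightarrow\textbf{Ab}$ to the natural transformation $\sigma$ constructed in Theorem \ref{The6.10}. No new geometric input is needed: the whole content sits in Theorem \ref{The6.10}, and what remains is the formal observation that whiskering a natural transformation of $\textbf{C}$-valued functors by a functor out of $\textbf{C}$ produces a natural transformation of $\textbf{Ab}$-valued functors.

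Concretely, I would first record the two identifications of functors $\textbf{Top}\mathrm{-}\textbf{Rep}\Gamma\rightarrow\textbf{Ab}$ that make the statement meaningful: on the one hand $H_n(-)=H_n\circ\mathop{lim^{\Gamma}}\limits^{\bullet}$, which is exactly Definition \ref{Def4.9}; on the other hand $H_nAt^{\Gamma}(-)=H_n\circ S\circ At^{\Gamma}=H_n\circ(SAt^{\Gamma})$, which is simply the definition of the singular homology of the space $At^{\Gamma}(T,f)$. Theorem \ref{The6.10}, whose hypothesis that $\Gamma$ has finitely many components is precisely what guarantees the existence of $\sigma$, supplies for every $(T,f)\in\textbf{Top}\mathrm{-}\textbf{Rep}\Gamma$ a chain map $\sigma_{(T,f)}\colon\mathop{lim^{\Gamma}}\limits^{\bullet}(T,f)\rightarrow S(At^{\Gamma}(T,f))$, together with, for every morphism $\theta\colon(T,f)\rightarrow(T',f')$, the commuting square $S(At^{\Gamma}(\theta))\,\sigma_{(T,f)}=\sigma_{(T',f')}\,\mathop{lim^{\Gamma}}\limits^{\bullet}(\theta)$.

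I would then define, for each $n\ge 0$ and each object $(T,f)$, the component $\psi_{n,(T,f)}:=H_n(\sigma_{(T,f)})\colon H_n(T,f)\rightarrow H_nAt^{\Gamma}(T,f)$, via the identifications above. Naturality is immediate: apply the functor $H_n$ to the square displayed in the proof of Theorem \ref{The6.10} and use that $H_n$ preserves composition, obtaining $H_nAt^{\Gamma}(\theta)\circ\psi_{n,(T,f)}=\psi_{n,(T',f')}\circ H_n(\theta)$, which is exactly the naturality of $\psi_n$. The only genuine point to watch is the bookkeeping, namely verifying that the source and target of $H_n(\sigma)$ are literally the functors $H_n(-)$ and $H_nAt^{\Gamma}(-)$; once Theorem \ref{The6.10} is in hand, the argument can be kept to a couple of lines.
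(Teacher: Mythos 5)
Your proposal is correct and is exactly the argument the paper intends: the corollary is stated immediately after Theorem \ref{The6.10} with no separate proof, the point being precisely that whiskering the natural transformation $\sigma:\mathop{lim^{\Gamma}}\limits^{\bullet}\rightarrow SAt^{\Gamma}$ by the homology functor $H_n$ yields the desired natural transformation from $H_n(-)=H_n\mathop{lim^{\Gamma}}\limits^{\bullet}$ to $H_nAt^{\Gamma}(-)=H_nSAt^{\Gamma}$. Your bookkeeping of the source and target functors and the naturality square matches the paper's setup, so nothing further is needed.
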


\end{document}